\documentclass{amsart}
\usepackage[latin1]{inputenc}
\usepackage[english]{babel}
\usepackage[T1]{fontenc}
\usepackage{amsmath}
\usepackage{amssymb}
\usepackage{amsthm}
\usepackage{array,booktabs}
\usepackage{graphicx}
\usepackage{lmodern}
\usepackage{tcolorbox}
\usepackage{quoting}
\usepackage{enumerate}
\usepackage{mathtools}
\usepackage{csquotes}
\usepackage{cite}
\usepackage{tikz}
\usepackage{tikz-cd}
\usetikzlibrary{matrix,arrows,decorations.pathmorphing}
\usepackage{wrapfig}
\renewcommand{\Re}{\operatorname{Re}}
\renewcommand{\Im}{\operatorname{Im}}
\def\R{\ensuremath\mathbb{R}}
\def\C{\ensuremath\mathbb{C}}
\def\A{\ensuremath\mathbb{A}}
\def\Z{\ensuremath\mathbb{Z}}
\def\Q{\ensuremath\mathbb{Q}}

\def\H{\ensuremath\mathbb{H}}

\usepackage{hyperref}
\usepackage[final]{pdfpages}
\usepackage{verbatim}
\newtheorem{thm}{Theorem}[section]

\newtheorem{cor}[thm]{Corollary}
\newtheorem{lemma}[thm]{Lemma}
\newtheorem{prop}[thm]{Proposition}
\theoremstyle{remark}
\newtheorem{remark}[thm]{Remark}

\def\eps{\ensuremath\varepsilon}
\def\0{\emptyset}

\def\Id {\text{\rm Id}}
\def\dist{\text{\rm dist}}
\def\spec{\text{\rm spec}}

\def\SL{\hbox{\rm SL}}
\def\PSL{\hbox{\rm PSL}}
\def\GL{\hbox{\rm GL}}
\def\modulo{\text{ \rm mod }}
\def\vol{\text{\rm vol}}
\def\cond{\text{\rm cond}}
\numberwithin{equation}{section}
\def\n{A}
\def\nn{a}
\def\mm{b}
\def\m{B}

\numberwithin{equation}{section}
\begin{document}
\title{Central values of additive twists of cuspidal $L$-functions}
\author{Asbjørn Christian Nordentoft}
\address{Mathematical Institute of the University of Bonn, Endenicher Allee 60, Bonn 53115, Germany}
\email{\href{mailto:acnordentoft@outlook.com}{acnordentoft@outlook.com}}
\date{\today}
\subjclass[2010]{11F67(primary), and 11M41(secondary)}
\begin{abstract} Additive twists are important invariants associated to holomorphic cusp forms; they encode the Eichler--Shimura isomorphism and contain information about automorphic $L$-functions. In this paper we prove that central values of additive twists of the $L$-function associated to a holomorphic cusp form $f$ of even weight $k$ are asymptotically normally distributed. This generalizes (to $k\geq 4$) a recent breakthrough of Petridis and Risager concerning the arithmetic distribution of modular symbols. Furthermore we give as an application an asymptotic formula for the averages of certain \lq wide{\rq} families of automorphic $L$-functions consisting of central values of the form $L(f\otimes \chi,1/2)$ with $\chi$ a Dirichlet character. 
\end{abstract}
\maketitle
\section{Introduction} 
In this paper we study the statistics of central values of additive twists of the $L$-functions of holomorphic cusp forms (of arbitrary even weight). Additive twists of cuspidal $L$-functions are important invariants; on the one hand they show up in the parametrization of the Eichler--Shimura isomorphism and on the other hand additive twists shed light on central values of Dirichlet twists of cuspidal $L$-functions.

We prove that when arithmetically ordered, the central values of the additive twists of a cuspidal $L$-function are asymptotically normally distributed. As an application we calculate the asymptotic behavior (as $X\rightarrow \infty$) of the averages of certain \lq wide\rq\, families of automorphic $L$-functions;
\begin{align}\label{family}  \sideset{}{^\ast}\sum_{\pi \in\mathcal{F}_n,\, \cond(\pi)\leq X} L(\pi, 1/2),   \end{align}
where $\cond(\pi)$ denotes the conductor of the automorphic representation $\pi$, the asterisk on the sum denotes a certain weighting and the families consist of isobaric sums of twists; 
$$\mathcal{F}_n=\left\{ (\pi_f\otimes \chi_1) \boxplus \cdots \boxplus (\pi_f\otimes \chi_{2n})\mid \chi_1\cdots \chi_{2n}={\bf 1} \right\},$$
where $\chi_1,\ldots, \chi_{2n}$ are automorphic representations of $\GL_1(\A_\Q)$, ${\bf 1}$ denotes the trivial automorphic representation of $\GL_1(\A_\Q)$ and $\pi_f$ is the automorphic representation of $\GL_2(\A_\Q)$ associated to a {\it fixed} holomorphic newform $f$ (suppressed in the notation). For the precise statements of our main results, see Theorem \ref{mainthm} and Corollary \ref{multiplicativetwistsgeneral} below. 

Our distribution result is a higher weight analogue of a recent result of Petridis and Risager \cite{PeRi}, which settled (an averaged version of) a conjecture due to Mazur and Rubin \cite{MaRu19} concerning the normal distribution of modular symbols (a different proof was later given by Lee and Sun \cite{LeeSun19} using dynamical methods).
The conjecture of Mazur and Rubin concerns the arithmetic distribution of the modular symbol map;
$$  \{\infty, \mathfrak{a}\} \mapsto \langle \mathfrak{a}, f \rangle:=2\pi i \int_{\infty}^\mathfrak{a} f(z)dz,  $$
where $f\in \mathcal{S}_2(\Gamma_0(q))$ is a cusp form of weight $2$ and level $q$ and $ \{\infty, \mathfrak{a}\}$ is the homology class of curves between the cusps $\infty$ and $\mathfrak{a}$. Petridis and Risager prove that this map is asymptotically normally distributed when ordered by the denominator of the cusp $\mathfrak{a}$ and appropriately normalized \cite[Theorem 1.10]{PeRi}. See Section \ref{mazurrubinback} below for more background on the conjectures of Mazur and Rubin and their motivation. 
\subsection{Statement of results} 
We will now state a special case of our main result and refer to Theorem \ref{maingeneral} below for the most general version. Let $f\in S_k(\Gamma_0(q))$ be a cusp form of (arbitrary) even weight $k$ and level $q$ with Fourier expansion (at $\infty$) given by
$$f(z)=\sum_{n\geq 1} a_f(n) q^n,\quad q=e^{2\pi i z}.$$
Then we define the {\it additive twist} (by $r\in \R$) of the $L$-function associated to $f$ as
$$ L(f,r,s):= \sum_{n\geq 1} \frac{a_f(n) e(nr)}{n^s}, $$
where $e(x)=e^{2\pi i x}$. The additive twists converge absolutely for $\Re s>\frac{k+1}{2}$ and when $r\in \Q$, they admit analytic continuation to the entire complex plane. If furthermore $r$ is $\Gamma_0(q)$ equivalent to $\infty$, we have simple functional equations relating $s\leftrightarrow k-s$ (see Section \ref{Additive} below for details). For $k=2$ the additive twists coincide with modular symbols (see Remark \ref{conrete}).  

Now we will explain what we mean by saying that additive twists are {\it asymptotically normally distributed}: Given $X>0$, we consider $L(f,\cdot,k/2)$ as a random variable defined on the following outcome space endowed with the uniform probability measure;
\begin{align}\label{TQ}  T(X):= \left\{  a/c \in \Q \mid 0<a<c\leq X,\, (a,c)=1,\, q\mid c  \right\}.\end{align}
Then our main theorem is the following.
\begin{thm} \label{mainthm}
Let $f\in \mathcal{S}_k(\Gamma_0(q))$ be a cusp form of even weight $k$ and level $q$. Then for any fixed box $\Omega\subset \C$, we have 
\begin{align}
\mathbb{P}_{T(X)}\left( \frac{L(f,a/c,k/2)}{(C_f \log c)^{1/2}} \in \Omega \right):=& \frac{\#\{ a/c\in T(X)\mid \frac{L(f,a/c,k/2)}{(C_f \log c)^{1/2}} \in \Omega  \}}{\#T(X)}\\
\nonumber =&\mathbb{P}( \mathcal{N}_\C (0,1)\in \Omega)+o(1),
\end{align}
as $X\rightarrow \infty$, where $\mathcal{N}_\C (0,1)$ denotes the standard complex normal distribution and the variance is given by
\begin{align}\label{Cf}C_f:=\frac{(4\pi)^{k}|\!|f|\!|^{2}}{(k-1)!\, \vol(\Gamma_0(q))},\end{align}
with $|\!|f|\!|$ the Petersson-norm of $f$ and $\vol(\Gamma_0(q))$ the hyperbolic volume of $\Gamma_0(q)\backslash \H$.\\
(Here $\mathbb{P}( \mathcal{N}_\C (0,1)\in \Omega)$ denotes the probability of the event $ \mathcal{N}_\C (0,1)\in \Omega$.)

 \end{thm}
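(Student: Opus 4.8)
The plan is to establish the normal distribution via the method of moments: I would show that for every pair of non-negative integers $(p,q')$,
\begin{align}\label{momentgoal}
\frac{1}{\#T(X)}\sum_{a/c\in T(X)} \frac{L(f,a/c,k/2)^{p}\,\overline{L(f,a/c,k/2)}^{\,q'}}{(C_f\log c)^{(p+q')/2}} \longrightarrow \mathbb{E}\left[\mathcal{N}_\C(0,1)^{p}\,\overline{\mathcal{N}_\C(0,1)}^{\,q'}\right]
\end{align}
as $X\to\infty$, and then invoke the standard fact that the complex Gaussian is determined by its moments, together with a Lévy-type continuity argument, to upgrade moment convergence to convergence of the probabilities $\mathbb{P}_{T(X)}(\cdots\in\Omega)$ for boxes $\Omega$. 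Since the limiting moments vanish unless $p=q'$ (in which case they equal $p!$), the arithmetic heart of the matter is to prove that the $(p,p)$-moment is $\sim p!\,(C_f\log c)^{p}$ on average and that all unbalanced moments are of lower order.

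The main input will be exact formulas for the additive twists. First I would record that for $a/c\in T(X)$ the completed additive twist $L(f,a/c,k/2)$ can be written as a period integral: pushing the defining Dirichlet series to the central point and using the functional equation attached to the cusp $a/c$ (which is $\Gamma_0(q)$-equivalent to $\infty$, as every such rational is), one gets an expression of the shape $L(f,a/c,k/2)=c_k\int_0^{\infty}f(a/c+it)\,t^{k/2-1}\,dt$ up to normalization, i.e. essentially a weight-$k$ modular symbol $\langle a/c, f\rangle$. The key algebraic structure is then the cocycle relation: modular symbols satisfy $\langle \gamma a/c, f\rangle=\langle a/c,f\rangle+\langle \text{cusp}, f\rangle$-type additivity under the $\Gamma_0(q)$-action, so that raising to the $p$-th power and summing over $T(X)$ can be organized, via a Farey-dissection / continued-fraction decomposition of $a/c$, into sums of products of ``unit'' modular symbols indexed by the partial quotients. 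This is exactly the mechanism by which a sum of roughly $\log c$ independent-looking bounded contributions produces, after the central limit heuristic, a Gaussian of variance $\asymp \log c$; the constant $C_f$ in \eqref{Cf} is forced by the second moment, which by unfolding against $f$ picks up $\|f\|^2$, the Gamma factor $(k-1)!/(4\pi)^k$ from the Mellin transform at $s=k/2$, and the volume normalization from equidistribution of Farey fractions.

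Concretely, the computation of the averaged moments I would run through an Eisenstein-series / spectral argument: attach to the additive twist a suitable (higher-weight) Eisenstein series twisted by the modular symbol — a non-holomorphic or ``higher-order'' Eisenstein series $E^{(p)}(z,s)$ whose Fourier coefficients at the cusps encode $\sum_{c} \langle a/c,f\rangle^{p} c^{-2s}$ — compute its meromorphic continuation, and read off the polar behaviour at $s=1$ to extract the main term of the moment and its logarithmic power. The balanced moments $(p,p)$ will have a pole of order $p+1$ contributing the $(\log X)^{p}$ (hence $(\log c)^{p}$ on average), with leading constant $p!\,C_f^{p}$ after matching constants, while the unbalanced moments, where the modular-symbol contributions fail to pair off, give strictly smaller polar order and hence are $o$ of the main term; this is the step where one sees the phase cancellation. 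Throughout, one needs uniform control of error terms in $c$ (shifted-convolution/large-sieve type bounds for the Fourier coefficients $a_f(n)$, valid for all even $k\geq 2$, which is where the generalization beyond weight $2$ actually costs something), and a careful treatment of the normalization by the random quantity $(C_f\log c)^{1/2}$ rather than by a deterministic $(C_f\log X)^{1/2}$ — handled by a dyadic decomposition of $c$ and the observation that $\log c\sim\log X$ for all but a negligible proportion of $T(X)$.

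The step I expect to be the main obstacle is the precise evaluation of the leading constant in the higher moments — proving that the order of the pole at $s=1$ of the $p$-th moment generating object is exactly $p+1$ with leading coefficient $p!\,C_f^{p}$, i.e. that the combinatorics of the iterated cocycle relation reproduce the moments of a Gaussian rather than merely a distribution with the right support. This requires tracking, through the meromorphic continuation of the higher-order Eisenstein series, the exact residues at each stage of an inductive scheme (the ``$p\to p-1$'' reduction), and checking that the cross terms arising from the non-commutativity of the modular-symbol cocycle contribute only to lower-order poles; establishing these asymptotics with a power-saving error term uniform in $X$ is the technically hardest part and the reason a direct second-moment argument is not enough.
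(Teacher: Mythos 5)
Your overall architecture (method of moments, a generating Dirichlet series in $c$, a Goldfeld-type Eisenstein series whose constant term encodes it, poles at $s=1$ of order $p+1$ for balanced moments and lower order otherwise, then Fr\'echet--Shohat/Cram\'er--Wold) is the same as the paper's, but the proposal has a genuine gap exactly at the point you yourself flag as the main obstacle, and the mechanism you sketch to get there would not work for $k\geq 4$. You assert that $L(f,a/c,k/2)$ is ``essentially a weight-$k$ modular symbol'' satisfying a cocycle/additivity relation under the $\Gamma_0(q)$-action, so that powers can be organized via Farey/continued-fraction dissection into products of unit symbols. That is the weight-$2$ picture; for $k\geq 4$ the central value is a single coefficient of the Eichler--Shimura period polynomial and does \emph{not} satisfy any such additive cocycle relation (it is only a quantum modular form, with a nontrivial correction term). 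This is precisely why the paper cannot simply recycle Petridis--Risager: it needs the explicit formula of Lemma \ref{spectrallemma} expressing $L(f,\gamma\infty,k/2)$ through the iterated antiderivatives $I_j$ at $\gamma z$ and $z$ together with powers of $c^{-j}$ and $j(\gamma,z)^{\pm j}$, the $N$-shifted series (\ref{N-shift}) to remove the negative powers of $c$, and the ``automorphic completion'' into the Poincar\'e series $G_{\n,\m,l}(z,s)$ of varying weights $l$.

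With that input, the decisive analytic step — that the balanced moment series has a pole at $s=1$ of order exactly $p+1$ with leading coefficient producing $p!\,C_f^p$ (after normalization), while unbalanced moments have strictly lower polar order — is carried out in the paper by an induction on the ``total weight'' using the recursion of Lemma \ref{induction} (resolvent plus raising/lowering operators) and, crucially, Lemma \ref{key}, where $L_k\bigl(y^{k/2}f\bigr)=0$ forces the inner products against powers of $y^{k/2}f$ to telescope; this yields Proposition \ref{polebound} and Theorem \ref{main}, and then Theorems \ref{final} and \ref{approxthm} convert this into the moment asymptotics. Your proposal leaves all of this as ``read off the polar behaviour'' and explicitly concedes you do not know how to control the pole orders and residues, so the heart of the theorem is unproven; the auxiliary tools you invoke instead (shifted-convolution/large-sieve bounds) are not what is needed here — the paper's vertical-line control comes from resolvent norm bounds (Proposition \ref{Growth}). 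A minor additional point: your limiting moments ($p!$ for $p=q'$) correspond to a variance-one complex Gaussian, whereas the paper's $\mathcal{N}_\C(0,1)$ has density $\tfrac{1}{2\pi}e^{-(x^2+y^2)/2}$ and absolute moments $2^p p!$, so your constants would have to be rechecked against the definition of $C_f$ in (\ref{Cf}).
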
 
 \begin{remark}\label{remgen}
With our methods, we can generalize the above theorem in three aspects, which can all be combined:
\begin{enumerate}
\item We can consider more general outcome spaces corresponding to twists at an arbitrary cusp. Note that $T(X)$ corresponds exactly to additive twists at cusps, which are $\Gamma_0(q)$-equivalent to $\infty$. See (\ref{T}) below for the definition of the outcome space for general cusps.
\item We can consider cusp forms for a general discrete and co-finite subgroup of $\PSL_2(\R)$ with a cusp at $\infty$.
\item We can consider the joint distribution of an orthogonal basis of cusp forms.  
\end{enumerate}
See Theorem \ref{maingeneral} below for the most general version of our main theorem, incorporating all of the three above aspects. 

 \end{remark}
\begin{remark}The constant $C_f$ is a higher weight analogue of the {\it variance slope} defined by Mazur and Rubin (see \cite[Theorem 1.9]{PeRi}). Note that $C_f$ is independent of the embedding $f\hookrightarrow \mathcal{S}_k(\Gamma_0(N))$. \end{remark}
\begin{remark}Independently, a different proof of Theorem \ref{mainthm} in the special case of trivial level $q=1$ was obtained by Bettin and Drappeau \cite{BeDr19} using dynamical methods similar to those used by Sun and Lee. It is still an open problem to extend the dynamical approach to deal with general level, but in return the dynamical approach of Bettin and Drappeau applies to much more general {\it quantum modular forms} in the sense of Zagier \cite{Zagier10}. It is unclear whether the automorphic methods of this paper can be generalized to deal with quantum modular forms as well. The similarities and differences between the automorphic and dynamical approach deserve further exploration.
\end{remark}
\begin{remark}\label{conrete} In more concrete terms the above theorem says that for any fixed real numbers $x_1<x_2$ and $y_1<y_2$, we have
\begin{align*}
\frac{\#\left\{\frac{a}{c}\in T(X)\mid  x_1\leq \Re \left( \frac{L(f,a/c,k/2)}{(C_f \log c)^{1/2}}\right)\leq x_2,y_1\leq \Im \left( \frac{L(f,a/c,k/2)}{(C_f \log c)^{1/2}}\right)\leq y_2\right\}}{\#T(X)}\\
 \rightarrow \frac{1}{2\pi}\int_{x_1}^{x_2} \int_{y_1}^{y_2} e^{-(x^2+y^2)/2} dx dy,
\end{align*}
as $X\rightarrow \infty$, which is exactly the formulation used in \cite{PeRi}. One can see that restricting to the case $k=2$ and letting $f\in \mathcal{S}_2(\Gamma_0(q))$, Theorem \ref{mainthm} above recovers \cite[Theorem 1.10]{PeRi}. Here one has to use the integral representation (\ref{periodint}) for the additive twist of the $L$-function, which shows $ \langle \mathfrak{a} , f\rangle= L(f,r_\mathfrak{a},1),$ where $r_\mathfrak{a}\in \R$ represents the cusp $\mathfrak{a}$ (i.e. $r_\mathfrak{a}$ is fixed by the parabolic subgroup $\Gamma_\mathfrak{a}$).
\end{remark}
\subsection{Moment calculations}
The proof of Theorem \ref{mainthm} uses the method of moments. The calculation of the moments of additive twists is of independent interest and is used in the application to automorphic $L$-functions in Corollary \ref{multiplicativetwistsgeneral} below. In the course of the paper we will evaluate a number of different moments. In particular we will prove the following result at the end of Section \ref{difcusp}, which is exactly what we need to conclude Corollary \ref{multiplicativetwistsgeneral}. 
\begin{thm} \label{normal}
Let $f\in S_k(\Gamma_0(q))$ be a cusp form of even weight $k$ and level $q$ and $n$ a non-negative integer. Then we have 
\begin{align}\label{momenter}\sum_{\substack{0<a<c\leq X\\ (qa,c)=1}} |L(f,a/c,k/2)|^{2n} =P_n(\log X)X^2+O_\eps(X^{ 4/3+\eps}),  \end{align}
where $P_n$ is a polynomial of degree $n$ with leading coefficient
$$\frac{q (2C_{f})^n\, n!}{\pi \, \vol(\Gamma_0(q))},$$
with $C_f$ as in (\ref{Cf}) above.
\end{thm}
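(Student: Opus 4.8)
\emph{Strategy.} I would prove Theorem~\ref{normal} by the analytic method of moments, realising the $2n$-th moment as the polar part at $s=1$ of an Eisenstein series attached to $f$ and twisted by powers of the additive-twist period, in the spirit of the Eisenstein-series arguments of Goldfeld, O'Sullivan and Petridis--Risager, here adapted to weight $k$. \textbf{Step 1 (period integral; reduction to a twisted Eisenstein series).} The starting point is the Mellin formula $L(f,a/c,k/2)=\frac{(2\pi)^{k/2}}{\Gamma(k/2)}\int_0^\infty f(a/c+it)\,t^{k/2-1}\,dt$ together with the identification of the additive twist with an $f$-period at the relevant cusp (namely the cusp $0$, since $(qa,c)=1$ forces $(q,c)=1$), as developed in Section~\ref{Additive}. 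Writing $L(f,a/c,k/2)=\langle\gamma\rangle$ with $\gamma\in\Gamma_0(q)$ representing $a/c$, and noting that $\langle\gamma\rangle$ descends to the coset space $\Gamma_\infty\backslash\Gamma_0(q)$ (translating $a/c$ by an integer does not change $L(f,a/c,k/2)$), I would attach to the moment sum the Dirichlet series $D_n(s)=\sum_{\gamma}|\langle\gamma\rangle|^{2n}\,c(\gamma)^{-2s}$ and, as a tool to continue it, the family of ``higher Eisenstein series'' $E_{m,m'}(z,s)=\sum_{\gamma\in\Gamma_\infty\backslash\Gamma_0(q)}\langle\gamma\rangle^{m}\,\overline{\langle\gamma\rangle}^{\,m'}\,\Im(\gamma z)^s$, i.e. the $\varepsilon^{m}\bar\varepsilon^{m'}$-Taylor coefficients of $\sum_\gamma\exp(\varepsilon\langle\gamma\rangle+\bar\varepsilon\overline{\langle\gamma\rangle})\Im(\gamma z)^s$. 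Because for $k>2$ the period is a genuine $1$-cocycle and not a homomorphism, $E_{m,m'}$ is not $\Gamma_0(q)$-automorphic but only automorphic up to lower-order period twists — a higher-order automorphic form.

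\textbf{Step 2 (continuation and the pole at $s=1$).} I would continue $E_{m,m'}(z,s)$, hence $D_n(s)$, meromorphically past $\Re s=1$ by induction on $m+m'$, inverting the shifted Laplacian $\Delta+s(1-s)$ on $\Gamma_0(q)\backslash\H$ against the lower-order terms that record the failure of automorphy; the simple pole of the resolvent at $s=1$ compounded with the poles of the lower $E_{j,j'}$ produces a pole of $E_{m,m'}$ of order exactly $\min(m,m')+1$, so in particular $D_n(s)$ has a pole of order $n+1$ at $s=1$. The leading Laurent coefficient is then computed by a Rankin--Selberg unfolding: each pair of conjugate period factors $\langle\gamma\rangle\overline{\langle\gamma\rangle}$ contributes the edge residue $\operatorname{res}_{s=1}\langle|f|^2y^k,E(\cdot,s)\rangle=\|f\|^2/\vol(\Gamma_0(q))$ together with the archimedean factor $(4\pi)^k/(k-1)!$ coming from the $\Gamma$-factors of the period integrals — precisely the constant $C_f$ of \eqref{Cf} — while the factor $n!$ is the number of ways the $n$ holomorphic and $n$ antiholomorphic period factors may be paired, i.e. the Wick combinatorics underlying the Gaussian moments.

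\textbf{Step 3 (Tauberian step and error term).} A contour/Perron argument then converts the analytic data for $D_n(s)$ into \eqref{momenter}: the pole of order $n+1$ with the above leading coefficient yields the main term $P_n(\log X)X^2$ with $\deg P_n=n$ and leading coefficient $\frac{q(2C_f)^n n!}{\pi\,\vol(\Gamma_0(q))}$ (the elementary factors $q$ and $\pi$ being produced by the density of admissible $c$ with $(q,c)=1$ and by the residue of the Eisenstein series), and the error term $O_\eps(X^{4/3+\eps})$ comes from moving the contour into $\Re s<1$ and using polynomial-growth bounds for the continued $E_{n,n}(z,s)$ on vertical lines — equivalently, after an approximate functional equation, from bounding the off-diagonal (shifted-convolution / Kloosterman-sum) contributions that remain once the diagonal has produced the main term.

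\textbf{Main obstacle.} The principal difficulty is precisely the weight-$k$ feature that the additive-twist period is a cocycle rather than a homomorphism: this forces one to work with higher-order automorphic forms and, at each step of the induction, to control both the analytic continuation of the lower-order inhomogeneous terms and the precise shape of their Laurent expansions at $s=1$ well enough to pin down the constant $\frac{q(2C_f)^n n!}{\pi\,\vol(\Gamma_0(q))}$, not merely its order of magnitude. Securing the quantitative error term $O_\eps(X^{4/3+\eps})$ uniformly — which requires either growth bounds for the continued higher Eisenstein series or a direct estimate of the off-diagonal contribution — is the other substantial point; the weight-$2$ proof of Petridis--Risager can be used as a template, but the cocycle bookkeeping and the Rankin--Selberg constant both genuinely change when $k\geq 4$.
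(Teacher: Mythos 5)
Your overall architecture (generating Dirichlet series, pole of order $n+1$ at the edge, contour integration) matches the paper, but the step you wave at in Step 2 is precisely where the paper has to do something genuinely new, and your proposed mechanism would not work as stated. For $k\geq 4$ the map $\gamma\mapsto L(f,\gamma\infty,k/2)$ is \emph{not} a homomorphism, and it is not even a scalar cocycle whose coboundary is a finite combination of lower-order terms of the same shape: the period-polynomial cocycle relation mixes the central value with non-central values $L(f,a/c,j)$ and with powers of $a/c$. Consequently the series $E_{m,m'}(z,s)$ is not a higher-order automorphic form in the established sense, and the Petridis--Risager scheme you invoke (induct on $m+m'$, invert $\Delta+s(1-s)$ against ``lower-order terms recording the failure of automorphy'') has no starting point: each summand $\langle\gamma\rangle^m\overline{\langle\gamma\rangle}^{m'}\Im(\gamma z)^s$ is already a Laplace eigenfunction, so the obstruction is purely the lack of automorphy, and for $k\geq4$ that obstruction is not expressible through a character/homomorphism perturbation. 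The paper's resolution is different in substance: an exact formula for the central value in terms of the iterated antiderivatives $I_j$ (Lemma \ref{spectrallemma}, and its variant (\ref{speclemma2}) at the cusp $0$), multiplication by $c^{N}$ to kill the negative powers of $c$ (the $N$-shifted series (\ref{N-shift})), and then \emph{automorphic completion}: $E^{m,n}(z,s;N)$ is written as a finite combination of the genuinely automorphic weight-$l$ Poincar\'e series $G_{A,B,l}(z,s)$, whose continuation, pole orders and leading Laurent coefficient are controlled by the recursion with raising/lowering operators and weight-$l$ resolvents (Lemma \ref{induction}, Lemma \ref{key}, Proposition \ref{propeis}, Theorem \ref{main}). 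None of this machinery, nor any workable substitute for it, appears in your proposal; you correctly flag the cocycle issue as the ``main obstacle'' but do not supply the idea that overcomes it.

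Two secondary points. First, the exponent $4/3$ is not produced by contour shifting alone: the Tauberian balance in Theorem \ref{approxthm} gives $\max(4/3,\,2\Re s_1)$, and one needs the Kim--Sarnak bound on exceptional eigenvalues of $\Gamma_0(q)$ to know $2\Re s_1\leq 39/32<4/3$ (Selberg's $3/16$ would not suffice); your sketch never mentions exceptional eigenvalues. Also note the paper deliberately avoids approximate functional equations, so the ``equivalently, off-diagonal/shifted-convolution'' route you mention is not what is needed here. Second, the sum in (\ref{momenter}) runs over cusps equivalent to $0$, and the constant $q$ in the leading coefficient comes from the normalization $c(r)=c\sqrt{q}$ induced by the scaling matrix $\sigma_0$ (so the general asymptotic of Theorem \ref{generalcusp} is applied at $\sqrt{q}X$, giving $qX^2$), not from ``the density of admissible $c$ with $(q,c)=1$''; handling the cusp $0$ also forces the modified central-value formula and the factor $j(\gamma,\mathfrak{b})^{N}$ in the shifted series, which your reduction passes over.
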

\begin{remark}
The above moments correspond to additive twists at cusps which are $\Gamma_0(q)$-equivalent to the cusp $0$ (the set of all such twists is denoted $T_{\infty 0}$ in (\ref{T}) below). 
\end{remark}
\begin{remark} The determination of the moments follows from the analytic properties of a certain Eisenstein series $E^{m,n}(z,s)$ generalizing series introduced by Goldfeld in \cite{Go99} and \cite{Go99.2}. Determining the location of the dominating pole, the corresponding pole order and leading Laurent coefficient of the original Goldfeld Eisenstein series was firstly achieved by Petridis and Risager in \cite{PeRi2} using perturbation theory and the analytic properties of the hyperbolic resolvent. This allowed them to prove normal distribution for a certain more geometrically flavored ordering of the modular symbols (ordered by $c^2+d^2$, where $c,d$ are the lower entries of the matrix $\gamma$). In order to prove (an averaged version of) the conjecture of Mazur and Rubin, Petridis and Risager \cite{PeRi} essentially had to derive the analytic properties of the constant Fourier coefficient of $E^{m,n}(z,s)$. This is reminiscent of the Shahidi--Langlands method \cite[Section 8]{GeMi04}. The strategy of proof in this paper is inspired by the overall strategy introduced by Petridis and Risager. 
\end{remark}

\subsection{Applications to automorphic $L$-functions}\label{mazurrubinback} The motivation behind the conjectures of Mazur and Rubin was to gain information about the vanishing/non-vanishing of the central values of the twisted Hasse--Weil $L$-functions $L(E, \chi, 1)$ where $E/\Q$ is an elliptic curve and $\chi$ is a Dirichlet character. By a sufficiently general version of the Birch--Swinnerton-Dyer conjecture, this is related to the following problem in Diophantine stability (see \cite{MaRu19} for details):
\begin{align*} &\text{\it How likely is it that $\mathrm{rank}_\Z \, E(K)>\mathrm{rank}_\Z \,E(\Q)$}\\
&\qquad \qquad \qquad \qquad \text{\it as $K$ ranges over abelian extensions of $\Q$?}\end{align*}
If $\chi$ is a primitive Dirichlet character modulo $c$, then the Birch--Stevens formula \cite[Theorem 2.3]{MaRu19} relates these central values to modular symbols; 
$$ \tau(\overline{\chi})L(E, \chi, 1)= \sum_{a\in (\Z/c\Z)^\times} \overline{\chi}(a)  \langle a/c , f_E\rangle,  $$ 
where $f_E$ is the weight 2 newform corresponding to $E$ via modularity and $\tau(\overline{\chi})$ is a Gauss sum. This led Mazur and Rubin to the study of the distribution of modular symbols, and based on computational experiments, they made a number of conjectures about the distribution of modular symbols, one of which predicted a normal distribution. In this paper we will not contribute to the other conjectures put forth by Mazur and Rubin, consult instead \cite{DiHoKiLe20}, \cite{Sun20}, \cite[Theorem 9.2]{BlFoKoMiMiSa18}.

Following these lines of thinking, we apply our methods to the study of families constructed from certain twisted $L$-functions. Given a newform $f\in \mathcal{S}_k(\Gamma_0(q))$ of weight $k$ and level $q$ and a primitive Dirichlet character $\chi$ of conductor co-prime to $q$, we define the {\it multiplicatively twisted $L$-function of} $f$;
$$L(f\otimes \chi, s):= \sum_{n\geq 1} \frac{\lambda_f(n)\chi(n)}{n^s}, $$
(where $\lambda_f(n)$ denotes the $n$th Hecke eigenvalue of $f$), which converges absolutely for $\Re s>1$ and admits analytic continuation satisfying a functional equation. Note that because of the co-primality condition the above Dirichlet series defines (the finite part of) the $L$-function corresponding to the automorphic representation $\pi_f\otimes\chi$ (justifying the notation). 

The study of averages of multiplicative twists of cuspidal $L$-functions has a long history (see for instance the work of Rohrlich \cite{Rohrlich89}, Duke, Friedlander and Iwaniec \cite{DuFrIw01}, and Chinta \cite{Chinta02}). Recently Blomer, Fouvry, Kowalski, Michel, Mili{\'c}evi{\'c} and Sawin \cite{BlFoKoMiMiSa18} have given an extensive account of the second moment theory for such twists. We are able to obtain new results for these automorphic $L$-functions. Combining Theorem \ref{normal} and the (generalized) Birch--Stevens formula (see Lemma \ref{BirchSteven} below), we obtain asymptotic formulas for the following (arithmetically weighted) averages of certain \lq wide{\rq} families of multiplicatively twisted $L$-functions, making (\ref{family}) precise. 
 \begin{cor} \label{multiplicativetwistsgeneral} 
Let $f\in \mathcal{S}_k(\Gamma_0(q))$ be a newform of even weight $k$ and level $q$ and $n$ a non-negative integer. Then we have for all $\eps>0$
\begin{align}
\nonumber &\sum_{\substack{0<c\leq X, \\(c,q)=1}} \frac{1}{\varphi(c)^{2n-1}} \sum_{\substack{\chi_1,\ldots, \chi_{2n} \modulo c,\\ \chi_1\cdots \chi_{2n}= \chi_{\text{\rm principal}} }} \eps_{\chi_1,\ldots, \chi_n}\,\prod_{i=1}^{2n} \nu(f, \chi_i^*, c/c(\chi_i))L(f\otimes \chi_i^*,1/2)\\
\label{2ndmoment}&= P_n(\log X) X^2+O_\eps(X^{4/3+\eps}),
\end{align}
where $\chi^*\modulo c(\chi)$ denotes the primitive character inducing $\chi$, $\chi_{\text{\rm principal}}$ is the principal character mod $c$, $P_n$ is the degree $n$ polynomial from (\ref{momenter}), $\eps_{\chi_1,\ldots, \chi_n}=\chi_1(-1)\cdots \chi_n(-1)$ is a sign, and $\nu$ is an arithmetic weight given by
\begin{align}\label{arithmeticweight}\nu(f, \chi, n):= \tau(\overline{\chi})\sum_{\substack{n_1n_2n_3=n,\\ (n_1,q)=1}}\chi(n_1) \mu(n_1)\overline{\chi}(n_2) \mu(n_2) \lambda_f(n_3)n_3^{1/2}.\end{align}
\end{cor}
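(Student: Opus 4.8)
The plan is to deduce this from Theorem \ref{normal} by converting the additive twists $L(f,a/c,k/2)$ appearing there into multiplicative twists via the generalized Birch--Stevens formula, and then collapsing the resulting character sums by orthogonality. The only ingredient beyond Theorem \ref{normal} is Lemma \ref{BirchSteven}, which I would phrase as the finite Fourier expansion over $(\Z/c\Z)^\times$: for integers $a,c$ with $(a,c)=(c,q)=1$,
\begin{equation*}
L(f,a/c,k/2)=\frac{1}{\varphi(c)}\sum_{\chi\bmod c}\overline{\chi}(a)\,\nu(f,\chi^\ast,c/c(\chi))\,L(f\otimes\chi^\ast,1/2).
\end{equation*}
This is dual to the assertion that $\sum_{(a,c)=1}\overline{\chi}(a)\,L(f,a/c,k/2)=\nu(f,\chi^\ast,c/c(\chi))\,L(f\otimes\chi^\ast,1/2)$; here the weight $\nu$ of \eqref{arithmeticweight} arises from opening $\sum_{a}\overline{\chi}(a)e(na/c)$ as a Gauss sum $\tau(\overline{\chi})$ when $\chi$ is primitive, corrected by a M\"obius inversion over the Euler factors at the primes dividing $c/c(\chi)$ (the source of the $\mu(n_1)\mu(n_2)\lambda_f(n_3)n_3^{1/2}$) when $\chi$ is imprimitive. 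Establishing this identity precisely --- matching the $s=k/2$ and $s=1/2$ normalizations and tracking the complex conjugates, which after reindexing $\chi\mapsto\overline{\chi}$ place $\chi^\ast$ rather than $\overline{\chi^\ast}$ on the twisted $L$-function --- is the genuinely new step, and I expect the imprimitive bookkeeping to be the main obstacle.

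Granting Lemma \ref{BirchSteven}, I would write $|L(f,a/c,k/2)|^{2n}=L(f,a/c,k/2)^{n}\,\overline{L(f,a/c,k/2)}^{\,n}$ and use that a newform on $\Gamma_0(q)$ has real Hecke eigenvalues, hence $a_f(n)\in\R$ and $\overline{L(f,a/c,k/2)}=L(f,-a/c,k/2)$. Substituting the expansion above into the $n$ factors of each group (characters $\chi_1,\dots,\chi_n$ for $L^{n}$, characters $\chi_{n+1},\dots,\chi_{2n}$ for $\overline{L}^{\,n}$) and using $\overline{\chi}(-a)=\chi(-1)\overline{\chi}(a)$ produces the prefactor $\prod_{i=n+1}^{2n}\chi_i(-1)$ together with $\prod_{i=1}^{2n}\overline{\chi_i}(a)$. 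Summing over $a$ coprime to $c$, orthogonality gives $\sum_{(a,c)=1}\prod_{i=1}^{2n}\overline{\chi_i}(a)=\varphi(c)$ when $\chi_1\cdots\chi_{2n}$ is principal and $0$ otherwise, which releases exactly one power of $\varphi(c)$ and leaves the weight $\varphi(c)^{-(2n-1)}$; and on the constraint set $\prod_{i=n+1}^{2n}\chi_i(-1)=\prod_{i=1}^{n}\chi_i(-1)=\eps_{\chi_1,\dots,\chi_n}$, since each factor is $\pm1$ and the product of all $2n$ of them equals $(\chi_1\cdots\chi_{2n})(-1)=1$. The outcome is that for every $c$ coprime to $q$ the quantity $\sum_{(a,c)=1}|L(f,a/c,k/2)|^{2n}$ equals precisely the $c$-th summand on the left-hand side of \eqref{2ndmoment}.

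It then remains to sum over $0<c\le X$ with $(c,q)=1$: the left side becomes $\sum_{0<a<c\le X,\ (qa,c)=1}|L(f,a/c,k/2)|^{2n}$, because the residues $a$ coprime to $c$ are represented by the integers $0<a<c$ with $(a,c)=1$, and $(c,q)=1$ together with $(a,c)=1$ is the single condition $(qa,c)=1$. Theorem \ref{normal} now applies verbatim and yields $P_n(\log X)X^{2}+O_\eps(X^{4/3+\eps})$ with the very same polynomial $P_n$, completing the argument. In short, once Lemma \ref{BirchSteven} is in place the corollary is a formal consequence of Theorem \ref{normal} via character orthogonality, with no additional analytic input required.
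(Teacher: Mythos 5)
Your proposal is correct and follows essentially the same route as the paper: insert the generalized Birch--Stevens expansion (Lemma \ref{BirchSteven}) into Theorem \ref{normal} and collapse the character sums by orthogonality, with the sign $\eps_{\chi_1,\ldots,\chi_n}$ emerging from the conjugate factors exactly as you describe. One small bookkeeping remark: your inverted Birch--Stevens formula pairs $\overline{\chi}(a)$ with $\chi^\ast$ in the weight and in the twist, whereas consistency requires either $\chi(a)$ with $\chi^\ast$ (as in (\ref{birchstevens2})) or $\overline{\chi}(a)$ with $\overline{\chi}^\ast$; since the final sum runs over all tuples $(\chi_1,\dots,\chi_{2n})$ and is invariant under $\chi_i\mapsto\overline{\chi_i}$ (which preserves both the principal-product constraint and $\eps_{\chi_1,\dots,\chi_n}$), this slip washes out and the conclusion is unaffected.
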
 
In particular for $n=1$, the above corollary reduces to an average second moment (see Corollary \ref{average2ndmoment} below), which was calculated without the extra averaging in \cite[Theorem 1.17]{BlFoKoMiMiSa18}. Interestingly our methods completely avoid the use of approximate functional equations.
\begin{remark}
Observe that the sum in the arithmetic weight $\nu$ can be expressed as the triple convolution 
$$(\delta_{(\cdot, q)=1}\times \chi\times  \mu)\ast (\overline{\chi}\times \mu) \ast(\lambda_f \times  (\cdot)^{1/2}).$$ 
This was exploited by Bruggeman and Diamantis in \cite{BrDia16} in their study of Fourier coefficients of the Goldfeld Eisenstein series $E^{1,0}(z,s)$. Furthermore for $\chi$ primitive, we have 
$$\nu(f, \chi^*, c/c(\chi))=\nu(f, \chi, 1)=\tau(\overline{\chi}),$$ 
and in general, we have the bound $|\nu(f, \chi, n)|\ll (c(\chi) n)^{1/2}$ using Deligne's bound for the Hecke eigenvalues. 
\end{remark}
\begin{remark} Bettin \cite{Be17} has considered the Eisenstein case of the above theorems, which amounts to studying the Estermann function defined as 
$$  D(a/c,s):=\sum_{n\geq 1} \frac{d(n)e(na/c)}{n^s},$$
where $d(n)$ is the divisor function and $a/c\in \Q$. Bettin managed to calculate all moments averaging over $a\in (\Z\backslash c\Z)^\times$ using an approximate functional equation. He similarly applied his results to studying certain iterated moments of central values of Dirichlet $L$-functions. 
\end{remark}

\subsection*{Acknowledgements}
I would like to express my gratitude to my advisor Morten Risager for suggesting this problem to me and for our countless stimulating discussions. I would also like to thank Yiannis Petridis for his time and insight.  


\section{Method of proof}\label{method} 
Let $f\in \mathcal{S}_k(\Gamma_0(q))$ be a cusp form of even weight $k$ and level $q$. In this section we will describe the overall strategy of the proof of Theorem \ref{mainthm}. Our approach is an extension of the techniques developed by Petridis and Risager in \cite{PeRi2} and \cite{PeRi}. We would like to point out that many technical difficulties show up when $k\geq 4$ and some essential new ideas were needed. This includes using the lowering and raising operators in the analysis of the recursion formula (see for instance the proof of Lemma \ref{key}), the automorphic completion step as described in Section \ref{formulaforcentralvalue} and the use of what we call {\it $N$-shifted Golfeld Eisenstein series} defined in (\ref{N-shift}).

\subsection{The strategy of proof} We will use a classical result of Fréchet and Shohat \cite[p.17]{Serf} known as the {\it method of moments}; in order to get the sought-after convergence in distribution, it is enough to show that (after a suitable normalization) the asymptotic moments (as $X\rightarrow \infty$) of the central values;
$$   \sum_{\substack{a/c\, \in T(X)}} L(f,a/c,k/2)^m \overline{L(f,a/c,k/2)}^n,  $$
agree with those of the standard complex normal distribution. 

By a standard contour integration argument, this can be reduces to understanding the analytic properties of the following Dirichlet series
\begin{align} \label{lfunc} D^{m,n}(f, s):= \sum_{\gamma \in \Gamma_\infty\backslash \Gamma_0(q)/\Gamma_\infty} \frac{L(f,\gamma \infty,k/2)^m \overline{L(f,\gamma \infty,k/2)}^n }{(c_\gamma)^{2s}},  \end{align}
where $\gamma \infty=a_\gamma/c_\gamma$ and $a_\gamma,c_\gamma$ denote the upper-left and lower-left entry of $\gamma$, respectively. Note that $L(f,\gamma \infty,k/2)$ and $c_\gamma$ are indeed well-defined on the double coset $\Gamma_\infty\backslash \Gamma_0(q)/\Gamma_\infty$ and that $\gamma \mapsto \gamma \infty$ defines a bijection 
$$\Gamma_\infty\backslash \Gamma_0(q)/\Gamma_\infty\rightarrow \left\{  a/c \in \Q \mid 0<a<c,\, (a,c)=1,\, q\mid c  \right\}\cup \{\infty \}.$$ 

We will derive the analytic properties of $D^{m,n}(f, s)$ by studying the following generalized {\it Goldfeld Eisenstein series}; 
\begin{align} \label{eis} E^{m,n}(z, s):= \sum_{\gamma\in\Gamma_\infty \backslash \Gamma_0(q)}  L(f,\gamma \infty,k/2)^m \overline{L(f,\gamma \infty,k/2)}^n \Im (\gamma z)^s,  \end{align}
where $\Gamma_\infty= \{\begin{psmallmatrix}1 & n \\ 0 & 1\end{psmallmatrix}\mid n\in \Z\}$ is the stabilizer of $\infty$ in $\Gamma_0(q)$. The series (\ref{lfunc}) and (\ref{eis}) are connected since the constant term in the Fourier expansion of $E^{m,n}(z, s)$ (at $\infty$) is given by 
$$ \frac{\pi^{1/2}y^{1-s}\Gamma(s-1/2)}{\Gamma(s)}D^{m,n}(f, s). $$
For the proof see Lemma \ref{fourier} below. This will allow us to pass analytic information from $E^{m,n}(z,s)$ to $D^{m,n}(f,s)$ as one does in the Langlands--Shahidi method.

In order to get information about the analytic properties of $E^{m,n}(z,s)$, we will use ideas from an unpublished paper by Chinta and O'Sullivan \cite{ChOS02}; we express $E^{m,n}(z,s)$ as a linear combination of certain Poincaré series $G_{\n,\m,l}(z,s)$, which are weight $l$ automorphic forms. This is known as {\it automorphic completion} and will allow us to employ the spectral theory of automorphic forms.

In particular we will use the analytic properties of the {\it higher weight resolvent operators} to recursively understand the pole order at $s=1$ and the leading Laurent coefficient of $G_{\n,\m,l}(z,s)$. The overall strategy can be illustrated as follows:
\begin{tcolorbox}
\begin{align*}
&1. \text{ Analytic properties of higher weight resolvent operators} \\
&  \downarrow^{\text{Induction argument}} \\
& 2. \text{ Analytic properties of the Poincaré series }G_{\n,\m,l}(z,s) \\
&\downarrow^{\text{A formula for the central value of additive twists}}\\
& 3. \text{ Analytic properties of }E^{m,n}(z,s)\\
&\downarrow^{\text{Fourier expansion}}\\
&  4. \text{ Analytic properties of }D^{m,n}(f,s) \\
&\downarrow^{\text{Contour integration}}\\
& 5.  \text{ Asymptotic moments of }L(f,a/c,k/2) \\
&\downarrow^{\text{Fréchet--Shohat (method of moments)}}\\
&  6. \text{ Normal distribution of }L(f,a/c,k/2)/(\log c)^{1/2} .
\end{align*}
\end{tcolorbox}

The rest of the paper is structured as follows; in Section \ref{background} we will introduce the needed background on weight $k$ Laplacians and additive twists. In Section \ref{Poincare}, we will study the analytic properties of the Poincaré series $G_{\n,\m,l}(z,s)$. In Section \ref{central}, we will prove the normal distribution of additive twists; in order to keep the exposition as simple as possible, we will restrict to the case of a single cusp form and additive twists corresponding to the cusp $\infty$ and then explain how to extend the methods to the general setting. In Section \ref{centralvalues}, we will present applications to certain \lq wide{\rq} families of automorphic $L$-functions. 
\section{Background: Weight $k$ Laplacians and additive twists}\label{background}
In this section we will recall some standard facts about higher weight Laplacians and additive twists of modular $L$-functions. We will work with a general discrete and co-finite subgroup $\Gamma$ of $\PSL_2(\R)$ with a cusp at $\infty$ of width 1 (see \cite[Section 2]{Iw} for definitions). But one does not lose much by restricting to the case of {\it Hecke congruence subgroups};
$$\Gamma=\Gamma_0(q):=\{\gamma \in \PSL_2(\Z)\text{ s.t. } q\mid c_\gamma\}.$$
\subsection{Weight $k$ Laplacians}
We will refer to \cite[Chapter 4]{DuFrIw02}, \cite[section 2.1.2]{Mi} and \cite[Chapter 4]{Iw} for a more comprehensive account on automorphic Laplace operators.

Let $k$ be an even integer. The space of {\it automorphic functions of weight} $k$ with respect to $\Gamma$ are (measurable) functions $g:\H\rightarrow \C$ satisfying
$$g( \gamma z)=j_\gamma(z)^k g(z),  \quad \text{for all } \gamma\in \Gamma,$$
where $j_\gamma(z):= j(\gamma,z)/|j(\gamma,z)|=(cz+d)/|cz+d|$ with $c,d$ the bottom-row entries of $\gamma$. Note that we have the cocycle relation;
\begin{align}\label{j}j_{\gamma_1\gamma_2}(z)=j_{\gamma_1}(\gamma_2 z) j_{\gamma_2}(z),  \end{align} 
and recall also the useful calculation 
\begin{align}\nonumber \Im (\gamma z)=\frac{\Im z}{|j(\gamma,z)|^2}. \end{align}
Given an automorphic function $g$ of weight $k$, we define the {\it Petersson norm} by
$$  |\!|g|\!|^2 := \int_{\Gamma\backslash \H} |g(z)|^2  d\mu (z), $$
where $d\mu (z)=dxdy/y^2$ is the hyperbolic measure on $\H$. From this we define the {\it Hilbert space of all square integrable weight $k$ automorphic functions};
$$ L^2(\Gamma, k):= \{g\text{ automorphic of weight $k$ with respect to $\Gamma$ s.t. } |\!|g|\!|<\infty\}.$$
(strictly speaking to get a Hilbert space one has to study the quotient modulo the kernel of $|\!|\cdot |\!|$) with inner-product given by
$$ \langle g,h\rangle := \int_{\Gamma\backslash \H} g(z)\overline{h(z)}  d\mu (z), $$
for $g,h\in L^2(\Gamma, k)$. Maa{\ss} defined certain {\it raising-} and {\it lowering operators} acting on $C^\infty(\H)$ (smooth functions on $\H$) as
\begin{align*} K_k:= (z-\overline{z}) \frac{\partial}{\partial z}+\frac{k}{2}, \quad L_k:= (z-\overline{z}) \frac{\partial}{\partial \overline{z}}+\frac{k}{2}.\end{align*}
These operators map between spaces of different weights in the sense that they define maps; 
\begin{align*} &K_k: C^\infty(\H)\cap L^2(\Gamma, k)\rightarrow C^\infty(\H)\cap L^2(\Gamma, k+2), \\
&L_k:C^\infty(\H)\cap L^2(\Gamma, k)\rightarrow C^\infty(\H)\cap L^2(\Gamma, k-2),\end{align*}
for $\Gamma$ as above. 
The raising and lowering operators are adjoint to each other in the following sense;
\begin{align}\label{adjlow}  
\langle  K_k g_k, h_{k+2}   \rangle &=  \langle   g_k, L_{k+2}h_{k+2},   \rangle
\end{align}
and satisfy the following product rule;
\begin{align} 
  \nonumber K_{k+l} (g_k g_l ) &=  (K_k g_k)g_l+ g_k(K_l g_l), \\
  \label{prodlow}   L_{k+l} (h_k h_l ) &=  (L_k h_k)h_l+ h_k(L_l h_l),    
\end{align}
where $g_k, g_l, h_k, h_l$ are smooth automorphic $L^2$-functions of appropriate weights.
\begin{remark}
In most modern expositions the raising operator is denoted $R_k$ (see \cite[Chapter 4]{DuFrIw02}), but in order to avoid confusion with the resolvent operator, we follow the notation of Fay \cite{Fay}. We note that our definition of the lowering operator is equal to minus the one of Fay.\end{remark}

Using the raising and lowering operators, we define the {\it weight $k$ Laplacian} acting on $C^\infty(\H)$ as;
\begin{align}\label{delta} \Delta_k:= K_{k-2}L_k+\lambda(k/2)=L_{k+2}K_k+\lambda(-k/2) , \end{align}
where $\lambda(s)=s(1-s)$. The operator $\Delta_k$ defined on the space of all smooth and rapidly decaying weight $k$ automorphic functions, defines a non-negative and essentially self-adjoint operator on the Hilbert space $L^2(\Gamma, k)$. We denote (by slight abuse of notation) the unique self-adjoint extension also by $ \Delta_k$ with domain $D(\Delta_k)$ dense in $L^2(\Gamma, k)$. We define a {\it Maa{\ss} form} as an eigenfunction $\varphi\in D(\Delta_0)$ of $\Delta_0$.

One sees by direct computation that for $f\in \mathcal{S}_k(\Gamma)$, we have $(z\mapsto y^{k/2}f(z))\in D(\Delta_k)\subset L^2(\Gamma,k)$ and
\begin{align}\label{4.2.8}
L_k y^{k/2}f(z)=0.
\end{align}
Combining this with (\ref{delta}), we see that $y^{k/2}f(z)$ is an eigenfunction for $\Delta_k$ with eigenvalue $\lambda(k/2)=(1-k/2)k/2$.

Using the raising- and lowering operators one can show that 
\begin{align}\label{spectrum} \spec \Delta_k\subset [1/4,\infty) \cup \{\lambda_0=0,\lambda_1,\ldots, \lambda_m\}\cup \{\lambda(1),\ldots, \lambda(k/2)\} \end{align}
where the three sets above correspond to respectively the continuous part constituted by the Eisenstein series, the constant eigenfunction together with the so-called {\it exceptional Maa{\ss} forms} (Maa{\ss} forms with eigenvalue $0< \lambda_i<1/4$) and finally holomorphic cusp forms $\mathcal{S}_j (\Gamma)$ with $2\leq j\leq k$ and $j\equiv k\, (2)$. Recall that by the work of Selberg when $\Gamma=\Gamma_0(q)$ there are always (an abundance of) embedded Maa{\ss} forms (with eigenvalue $\lambda\geq 1/4$).

It is a famous conjecture of Selberg that there are no exceptional Maa{\ss}   forms when $\Gamma=\Gamma_0(q)$ is a Hecke congruence subgroup. Kim and Sarnak \cite{KiSa03} have proved that the smallest eigenvalue $\lambda_1>0$ for a Hecke congruence subgroup satisfies; 
$$  \lambda_1\geq \frac{1}{4}-\left(\frac{7}{64}\right)^2. $$ 
We define the {\it singular set of} $\Gamma$ as;
\begin{align}\label{P}\mathcal{P}_\Gamma:= \{s_0=1, s_1,\ldots, s_m \}, \end{align}
where $ s_i>1/2$ and $\lambda (s_i)=\lambda_i,i=0,\ldots,m$ are the exceptional eigenvalues (together with the trivial  eigenvalue $\lambda=0$). When $\Gamma$ is clear from context, we will shorten notation and write $\mathcal{P}=\mathcal{P}_\Gamma$. The quantity $s_1$ (where we define $s_1=1/2$ if $\mathcal{P}=\{1\}$) will turn out to control the error-terms of our  moment calculation in (\ref{momenter}). Observe that the bound of Kim and Sarnak shows that for Hecke congruence subgroups, we have $\Re s_1\leq 39/64$.

\subsection{Weight $k$ resolvent operators}
Associated to the weight $k$ Laplacian, we have the associated {\it resolvent operator}, which defines a meromorphic operator; 
$$ R(\cdot, k): \{s\in \C\mid \Re s>1/2\} \rightarrow \mathcal{B}(L^2(\Gamma, k)),$$
where $\mathcal{B}(L^2(\Gamma, k))$ denotes the space of bounded operators on the Hilbert space $L^2(\Gamma, k)$. The resolvent operator is (uniquely) characterized by the property:
$$ (\Delta_k-\lambda(s))R(s, k)=\Id_{L^2(\Gamma, k)}, \quad \text{for all }s\in \{s'\in \C\mid \Re s'>1/2\}\backslash (\mathcal{P}\cup\{1,\ldots, k/2\}) . $$
The analytic properties of weight $k$ resolvent operators have been studied intensively by Fay in \cite{Fay}. We will however not use any of these deep results. 

It follows from general properties of resolvent operators and (\ref{spectrum}) that $R(s,k)$ defines a meromorphic operator in the half-plane $\Re s>1/2$ with poles contained in the set $\mathcal{P}\cup \{1,\ldots, k/2\}$ (which is why we called $\mathcal{P}$ the singular set). Furthermore for any $\lambda_0=\lambda(w_0)$ with $\Re w_0>1/2$, we have the following representation in a neighborhood of $w_0$;
\begin{equation}\label{resexp} R(s,k)= \frac{P_{\lambda_0,k}}{s-w_0}+R_{\text{\rm reg}, w_0}(s,k), \end{equation}
where $P_{\lambda_0,k}$ is the projection to the eigenspace of $\Delta_k$ corresponding to the eigenvalue $\lambda_0$ (which might be empty) and $R_{\text{\rm reg},w_0}(s,k)$ is regular at $s=w_0$. 

Finally we also quote the following useful bound on the norm of the resolvent \cite[Appendix A]{Iw}.
\begin{lemma}\label{resbnd}
For $s \in\{s'\in \C\mid \Re s'>1/2\}\backslash (\mathcal{P}\cup \{1,\ldots, k/2\})$, we have 
$$ |\!|R(s,k)|\!|\leq \frac{1}{\dist(\lambda(s), \spec(\Delta_k))},$$
where $|\!|\cdot |\!|$ is the operator norm and $\dist(\cdot, \cdot)$ is the distance function. 
\end{lemma}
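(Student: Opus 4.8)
The plan is to obtain this (in fact with equality) as an instance of the bounded Borel functional calculus for the self-adjoint operator $\Delta_k$ on $L^2(\Gamma,k)$. First I would invoke the spectral theorem: $\Delta_k$ admits a projection-valued spectral measure $\{E_\mu\}$ supported on the closed set $\spec(\Delta_k)\subset\R$, with $\Delta_k=\int_{\spec(\Delta_k)}\mu\,dE_\mu$, and for every bounded Borel function $\phi$ on $\spec(\Delta_k)$ the operator $\phi(\Delta_k):=\int\phi(\mu)\,dE_\mu$ is bounded with $||\phi(\Delta_k)||=\sup_{\mu\in\spec(\Delta_k)}|\phi(\mu)|$.

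The one point that needs checking is that $\lambda(s)\notin\spec(\Delta_k)$ for $s$ in the stated domain, so that $\phi(\mu)=(\mu-\lambda(s))^{-1}$ is a bounded function on $\spec(\Delta_k)$. Write $s=\sigma+it$ with $\sigma>1/2$; then $\Im\lambda(s)=t(1-2\sigma)$, which is nonzero whenever $t\neq0$, so in that case $\lambda(s)\notin\R\supseteq\spec(\Delta_k)$. If $t=0$ then $\lambda(s)=\sigma(1-\sigma)<1/4$, so $\lambda(s)$ can only lie in the part of $\spec(\Delta_k)$ below $1/4$, which by (\ref{spectrum}) and the definition of $\mathcal{P}$ equals $\{\lambda(s'):s'\in\mathcal{P}\cup\{1,\ldots,k/2\}\}$; but $\lambda(\sigma)=\lambda(s')$ forces $\sigma=s'$ or $\sigma=1-s'$, and since $\sigma>1/2$ while every such $s'$ satisfies $s'>1/2$ (hence $1-s'<1/2$), we get $\sigma=s'\in\mathcal{P}\cup\{1,\ldots,k/2\}$, contradicting the hypothesis on $s$. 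Thus $\lambda(s)\notin\spec(\Delta_k)$, and since $\spec(\Delta_k)$ is closed this gives $\dist(\lambda(s),\spec(\Delta_k))>0$ and $\sup_{\mu\in\spec(\Delta_k)}|\phi(\mu)|=1/\dist(\lambda(s),\spec(\Delta_k))<\infty$.

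Then I would set $R(s,k):=\phi(\Delta_k)$; from the identity $(\mu-\lambda(s))\phi(\mu)=1$ and the homomorphism property of the functional calculus one obtains $(\Delta_k-\lambda(s))R(s,k)=\Id_{L^2(\Gamma,k)}$, so by the uniqueness characterization of the resolvent recalled above this $R(s,k)$ is the resolvent operator, and the norm identity of the first paragraph yields
\[||R(s,k)||=\sup_{\mu\in\spec(\Delta_k)}\frac{1}{|\mu-\lambda(s)|}=\frac{1}{\dist(\lambda(s),\spec(\Delta_k))},\]
which is the asserted bound, and indeed an equality. I do not expect any genuine obstacle: the argument is standard operator theory, and the only step worth spelling out carefully is the spectral-avoidance claim of the second paragraph; alternatively one may simply cite \cite[Appendix A]{Iw}, where this is carried out.
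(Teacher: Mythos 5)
Your argument is correct: the spectral-avoidance check that $\lambda(s)\notin\spec(\Delta_k)$ for $s$ in the stated domain is exactly the point worth verifying, and the Borel functional calculus then gives $\|R(s,k)\|=1/\dist(\lambda(s),\spec(\Delta_k))$, which is even stronger than the stated inequality. The paper itself offers no proof but simply quotes the bound from \cite[Appendix A]{Iw}, and your functional-calculus argument is precisely the standard reasoning behind that citation, so there is nothing to add.
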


\subsection{Additive twists of cuspidal $L$-functions}\label{Additive}
Fix a discrete and co-finite subgroup $\Gamma$ of $\PSL_2(\R)$ with a cusp at infinity of width 1 and let $f\in \mathcal{S}_k(\Gamma)$ be a cusp form of even weight $k$ with Fourier expansion (at $\infty$) given by
$$f(z)=\sum_{n\geq 1}a_f(n) q^n.$$
Then we define the {\it additive twist} (by $r\in \R$) of the $L$-function of $f$;
$$L(f,r,s):=\sum_{n\geq 1} \frac{a_f(n)e(nr)}{n^s},   $$
where $e(z)=e^{2\pi i z}$. We also define $L(f,\infty,s)\equiv 0$.

For all $r\in \R$, the above Dirichlet series converges absolutely for $\Re s>(k+1)/2$ by Hecke's bound \cite[Theorem 3.2]{Iw};
\begin{align}\label{Heckebound} \sum_{n\leq X} |a_f(n)|^2 \ll_f X^k. \end{align}
Furthermore if $r$ corresponds to a cusp (i.e. $r$ is fixed by a parabolic subgroup of $\Gamma$), then $L(f,r,s)$ admits analytic continuation to the entire complex plane (as we will see below).

Associated to additive twists by real numbers of the form $a/c=\gamma \infty $ with $\gamma = \begin{psmallmatrix} a& b \\ c & d \end{psmallmatrix}\in \Gamma,$ we define the completed $L$-function as;
$$  \Lambda(f,a/c,s):=  (2\pi)^{-s}\Gamma(s)c^{s} L(f,a/c,s).  $$
These completed $L$-functions admit analytic continuation, which satisfy the following functional equation (generalizing \cite[Lemma 1.1]{Ju}, see also \cite[Section A.3]{KoMiVa02}). 
\begin{prop} \label{AddFE}
For $\gamma \in \Gamma $, the completed $L$-function $\Lambda(f,a/c, s)$ admits analytic continuation to the entire complex plane, which satisfies the functional equation
$$ \Lambda(f,a/c, s)=(-1)^{k/2}\Lambda(f,-d/c, k-s),  $$
where $a/c=\gamma \infty$ and $-d/c= \gamma^{-1}\infty$. 
\end{prop}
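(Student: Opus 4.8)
The plan is to realize the additive twist $L(f,a/c,s)$ as a Mellin transform of $f$ restricted to the imaginary axis, and then to exploit the modular transformation law of $f$ under the specific matrix $\gamma = \begin{psmallmatrix} a & b \\ c & d \end{psmallmatrix}$ to flip the contour. Concretely, for $\Re s > (k+1)/2$ one writes
\begin{align}\label{periodint}
\Lambda(f,a/c,s) = \Gamma(s)\left(\tfrac{c}{2\pi}\right)^s L(f,a/c,s) = c^s \int_0^\infty f\!\left(\tfrac{a}{c} + iy\right) y^{s-1}\, dy,
\end{align}
which follows term-by-term from $\int_0^\infty e^{-2\pi n y} y^{s-1}\,dy = \Gamma(s)(2\pi n)^{-s}$ together with absolute convergence (justified by Hecke's bound \eqref{Heckebound}, which guarantees $a_f(n) \ll_f n^{k/2}$, so the integral converges at $0$ for $\Re s$ large and at $\infty$ always, since $f$ is a cusp form and decays exponentially). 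The point $a/c + iy$ lies on the vertical geodesic from the cusp $a/c = \gamma\infty$ up to $a/c$ itself, and applying $\gamma^{-1}$ sends this to the geodesic from $\infty$ down to $\gamma^{-1}(a/c)$... more usefully, I would change variables so that the integral is taken over the full geodesic from $a/c$ to $-d/c = \gamma^{-1}\infty$ passing near the "bottom'', and split at the point where the two cusps' local coordinates balance.

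The key step is the substitution exploiting $\gamma$. Write $f(a/c + iy)$ and use that $\gamma(a/c + iy) = $ (a point tending to $\gamma(a/c)$); more precisely the cleanest route is: the geodesic $\{a/c + iy : y>0\}$ is mapped by $z \mapsto \gamma^{-1} z$ not to a vertical line, so instead I use the standard trick of writing, for the completed integral from $0$ to $\infty$, the substitution $y \mapsto 1/(c^2 y)$ after translating. Let me set it up via the point $z = a/c + i t/c$: one checks $\gamma^{-1}\left(\tfrac{a}{c} + \tfrac{i}{ct}\right) = -\tfrac{d}{c} + \tfrac{it}{c}$ (a direct $2\times 2$ computation using $ad-bc=1$). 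Hence, writing the integral \eqref{periodint} with $y = t/c$, splitting as $\int_0^1 + \int_1^\infty$ in the variable $t$, and in the first piece substituting $t \mapsto 1/t$ and using the weight-$k$ transformation $f(\gamma z) = (cz+d)^k f(z)$ evaluated at $z = -d/c + it/c$ (so that $\gamma z = a/c + i/(ct)$ and $cz + d = it$, giving $(cz+d)^k = (it)^k = i^k t^k = (-1)^{k/2} t^k$ since $k$ is even), one converts $\int_0^1$ over the $a/c$-geodesic into $\int_1^\infty$ over the $-d/c$-geodesic with the factor $(-1)^{k/2}$ and the weight $t^{k}$ shifting $s \mapsto k-s$. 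Assembling the two halves gives an expression manifestly symmetric under $(s, a/c) \leftrightarrow (k-s, -d/c)$, which simultaneously furnishes the analytic continuation (each half-integral is entire in $s$, being an integral of an exponentially decaying function against $t^{s-1}$ over $[1,\infty)$) and the stated functional equation $\Lambda(f,a/c,s) = (-1)^{k/2}\Lambda(f,-d/c,k-s)$.

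The main obstacle is purely bookkeeping: getting the normalizing powers of $c$ and the factor $i^k$ to land correctly, and making sure the splitting point and the substitution $t \mapsto 1/t$ are applied on the correct geodesic (the one attached to $a/c$ versus the one attached to $-d/c$). There is no analytic difficulty beyond the exponential decay of cusp forms at cusps, which gives the entirety of each truncated integral for free; the continuation and functional equation then drop out together in the classical Hecke–Riemann style. I would double-check the sign $i^k = (-1)^{k/2}$ and confirm the identity $\gamma^{-1}\left(\tfrac{a}{c} + \tfrac{i}{ct}\right) = -\tfrac{d}{c} + \tfrac{it}{c}$ by explicit matrix multiplication before writing anything else down, since every constant in the final statement hinges on it.
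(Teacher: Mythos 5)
Your proposal is correct and is essentially the paper's own argument: the same Hecke-style period integral $\Lambda(f,a/c,s)=\int_0^\infty f(a/c+it/c)\,t^{s-1}dt$, the same key identity $\gamma(-d/c+it/c)=a/c+i/(ct)$ with $j(\gamma,\cdot)$ producing $(it)^k=(-1)^{k/2}t^k$, and the same split-plus-inversion $t\mapsto 1/t$ on one half. The only cosmetic difference is that you push both halves to integrals over $[1,\infty)$ (decay at the cusp $\infty$), while the paper keeps them as integrals over $[0,1]$ and cites the vanishing of $f$ at the cusps $a/c$ and $-d/c$; these are the same computation.
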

\begin{proof}
We mimic Hecke's proof of analytic continuation and functional equation of cuspidal $L$-functions. In the range of absolute convergence of $L(f,a/c, s)$, we have the following period integral representation;
\begin{align}
\label{periodint}\int_0^\infty f(a/c+iy/c)y^{s}\frac{dy}{y}= \Lambda(f,a/c, s).
\end{align}
If we let $z_\gamma:=-d/c-1/(iyc)$ then we have the following two relations;
$$   \gamma z_\gamma=  \begin{pmatrix} a& b \\ c & d \end{pmatrix}z_\gamma=a/c+iy/c,\quad  j(\gamma, z_\gamma)= -(iy)^{-1}.   $$
This yields 
\begin{align}
\nonumber &\int_0^\infty f(a/c+iy/c)y^{s}\frac{dy}{y}\\
\label{3.2.1}&=\int_0^1 f(a/c+iy/c)y^{s}\frac{dy}{y}+(-1)^{k/2}\int_0^1  f(-d/c+iy/c)y^{k-s}\frac{dy}{y},
\end{align}
using modularity of $f$ and a change of variable $y\mapsto 1/y$. Now we get analytic continuation to the entire complex plane by the vanishing of $f$ at the cusps $a/c=\gamma \infty$ and $-d/c=\gamma^{-1}\infty$. Furthermore (\ref{3.2.1}) yields the functional equation immediately. This completes the proof.
\end{proof}
\begin{remark}\label{DoHoKiLe}
In the special case $\Gamma=\Gamma_0(q)$, the above proposition applies to additive twists by rational numbers $a/c$ where $(a,c)=1$ and $q|c$. The functional equation for twists by general rational numbers is much more involved, see \cite[Theorem 3.1]{DiHoKiLe18}.
\end{remark}
\subsubsection{The convexity bound for additive twists}\label{convexity}
As a basic application of the functional equation, we will derive a preliminary bound for the central value ($s=k/2$) of additive twists by $a/c$, which are $\Gamma$-equivalent to $\infty$, using the Phragmén--Lindelöf principle \cite[Theorem 5.53]{IwKo}. This is known as the {\it convexity bound}.

By the absolute convergence of $L(f,a/c,s)$ for $\Re s>(k+1)/2$, we get for any $\eps >0$
$$ \Lambda(f,a/c,(k+1)/2+\eps+it)\ll_\eps c^{k/2+1/2+\eps} ,  $$
where the implied constant also depends on $f$ (here we also use Stirling's approximation, which shows that $\Gamma(k/2+1/2+\eps +it)$ is bounded in $t$). By the functional equation, we derive similarly that
$$ \Lambda(f,a/c,(k-1)/2-\eps+it )\ll_\eps c^{k/2+1/2+\eps}.   $$
Finally by the period integral representation (\ref{periodint}), we get the bound
$$\Lambda(f,a/c,s)\ll_c 1,$$
for $(k-1)/2-\eps \leq\Re s\leq (k+1)/2+\eps$. Thus the Phragmen-Lindelöf principle applies and we conclude that
\begin{align}\label{prebound} L(f,a/c,k/2)\ll_\eps c^{k/2+1/2+\eps}  c^{-k/2}=c^{1/2+\eps}.\end{align}
Although this is a crude bound, it shows together with \cite[(2.37)]{Iw};
$$ \#\left\{ (a,c)\mid 0\leq a<c, 0<c\leq X ,\begin{pmatrix} a & \ast\\ c & \ast \end{pmatrix}\in \Gamma   \right\} \ll_\Gamma X^2,    $$
that the main generating series $D^{m,n}(f, s)$ (defined in (\ref{lfunc}) for $\Gamma=\Gamma_0(q)$ and in (\ref{lfunc2}) for general $\Gamma$) converges absolutely (and locally uniformly) in some half-plane $\Re s\gg_{m,n} 1$ to an analytic function. It is not hard to show (see Remark \ref{lindelof}) that in fact additive twists satisfy a Lindelöf type bound; $L(f,a/c,k/2)\ll_\eps c^\eps$ for all $\eps>0$.

\subsubsection{Additive twists and the Eichler--Shimura isomorphism} The additively twisted $L$-functions show up in many papers in the analytic theory of $L$-functions in the disguise of the Voronoi summation formula (which is equivalent to the functional equation for additive twists), but they also have arithmetic significance in themselves as they appear in the {\it Eichler--Shimura isomorphism}. We recall how this isomorphism is constructed following \cite[Section 8.2]{Sh94}.
 
Let $f$ be a cusp form of weight $k$ and level $N$ and let $\gamma \in \Gamma_0(N)$. Then we can associate the following $(k-1)$-dimensional real vector 
$$ u_f(\gamma)=\left(\Re \int_{\gamma \infty}^\infty f(z) dz ,\Re \int_{\gamma \infty}^\infty f(z) z dz,\ldots , \Re\int_{\gamma \infty}^\infty f(z) z^{k-2}dz\right). $$
The map $u_f:\Gamma \rightarrow \R^{k-1}$ defines a parabolic co-cycle in group cohomology, i.e. an element of $Z^1_P(\Gamma, X)$ in Shimura's notation where $X=\R^{k-1}$ is a certain $\Gamma$-module.
 
From this we get a map
$$ f\mapsto \{\text{cohomology class of }u_f\}\in H^1_P(\Gamma, X), $$
which by \cite[Theorem 8.4]{Sh94} induces an $\R$-linear isomorphism from $S_k(\Gamma)$ to the parabolic cohomology group $H^1_P(\Gamma, X)$. This is what is known as the Eichler--Shimura isomorphism and it can also be described in terms of the {\it period polynomials}, which were introduced by Eichler \cite{Ei59}; 
$$ \sigma_{f,\gamma}(X):= \frac{1}{(k-1)!}\int_{\gamma\infty}^{\infty} f(z)(z-X)^{k-2}dz. $$
Note that the entries of $u_f(\gamma)$ are the real parts of the coefficients of $\sigma_{f,\gamma}(X)$, up to a scaling by binomial coefficients. The theory of period polynomials has been used to prove important rationality results for (multiplicative twists of) cuspidal $L$-functions \cite{Manin73}.

Now for any $0\leq l \leq k-2$, we have
\begin{align}
\nonumber \int_{\gamma \infty}^\infty f(z) z^l dz &= i \int_0^\infty f(a/c+iy) (a/c+iy)^l dy\\
\label{EiSh}&=\sum_{j=0}^l  \binom{l}{j} (a/c)^{l-j}  \frac{j!}{(-2\pi i)^{j+1}} L(f,a/c, j+1),
 \end{align}
which shows that the special values of additive twists encode the Eichler--Shimura isomorphism. This formula was the starting point for the author in \cite{Nordentoft20.2}, where the distribution of the Eichler--Shimura map is determined.

\section{Poincaré series defined from antiderivatives of cusp forms}\label{Poincare} 
In this section, we will construct a certain Poincaré series $G_{\n,\m,l}(z,s)$ starting from a fixed holomorphic cusp form. Then we will study the analytic properties of these Poincaré series, which will be crucial in proving our main results. The method introduced for studying these Poincaré series might have independent interest. 

Let $\Gamma$ be a co-finite, discrete subgroup of $\PSL_2(\R)$ with a cusp at $\infty$ of width 1 and $f\in\mathcal{S}_k(\Gamma)$ a cusp form of even weight $k$. Then we define for $n\geq 1$;
$$  I_n(z)=I_n(z;f):=\int_{i\infty}^{z}\int_{i\infty}^{z_{n-1}}\cdots \int_{i\infty}^{z_2} \int_{i\infty}^{z_1} f(z_0) dz_0 dz_1 \ldots d_{z_{n-1}}$$
and $I_0(z):=f(z)$. It is clear that we have $I_{n+1}'=I_n$ and thus $I_n$ is the $n$-fold antiderivative of $f$ which vanishes at $\infty$. By taking derivatives, we see that 
\begin{align}\label{7.1.1}I_n(z)=\frac{(-1)^{n-1}}{(n-1)!}\int_{i\infty}^z f(w) (w-z)^{n-1}dw.\end{align}

Furthermore we let $\n,\m$ denote two multi-sets (sets where elements have multiplicities) with all elements contained in $\{0,\ldots, k/2\}$. We call such a multi-set {\it positive} if all elements are positive or if the multi-set is empty. We let $|\n|$ and $|\m|$ denote the sizes of the multi-sets counted with multiplicity. 

For $A,B$ multi-sets of the above type and $l$ an even integer, we define 
\begin{align}\label{G}  G_{\n, \m, l}(z,s): = \sum_{\gamma \in \Gamma_\infty \backslash \Gamma}& j_\gamma (z)^{-l}\left( \prod_{\nn \in \n} \frac{I_\nn(\gamma z)}{(-2i)^{a}}\right) \left( \prod_{\mm \in \m} \frac{\overline{I_\mm(\gamma z)}}{(2i)^{b}}\right) \Im (\gamma z)^{s+\alpha(\n, \m)}, \end{align}
where 
$$\alpha(\n, \m):=\left(\sum_{\nn\in \n}k/2-\nn\right) +\left(\sum_{\mm\in \m}k/2-\mm\right). $$ 
We will see below that these series converges absolutely when $\Re s\gg 1$. We observe that by (\ref{j}) these Poincaré series are (formally) automorphic; 
$$ G_{\n, \m, l}(\gamma z,s)=j_\gamma(z)^l G_{\n, \m, l}(z,s), \quad \gamma\in \Gamma. $$
The scaling $\alpha(\n,\m)$ has the nice property that 
\begin{align} 
\label{pullout1}&G_{\n\cup\{0\},\m,l}(z,s)= y^{k/2}f(z)G_{\n,\m,l-k}(z,s),\\  
\label{pullout}&G_{\n,\m\cup\{0\},l}(z,s)= y^{k/2}\overline{f(z)}G_{\n,\m,l+k}(z,s),
\end{align}
which follows from the modularity of $f$. Observe that with $\n$ and $\m$ as above, we always have $\alpha(\n, \m)\geq 0$, which will be crucial in many argument. We also have the following symmetry
\begin{align}\label{conj}\overline{G_{\n,\m,l}(z,s)}=G_{\m,\n,-l}(z,\overline{s}). \end{align} 
This shows that it is enough to consider the case $l\geq 0$.

Firstly we will show that (\ref{G}) defines an element of $L^2(\Gamma, l)$ in some half-plane following unpublished work of Chinta and O'Sullivan \cite{ChOS02}. 

\begin{lemma} \label{regionofconv}
For $|A|+|B|>0$ the series $G_{\n, \m, l}(z,s)$ converges absolutely (and locally uniformly in $z$ and $s$)  in the half-plane 
$$\Re s> 1+|A|+|B|$$ 
to an element of $L^2(\Gamma, l)$.
\end{lemma}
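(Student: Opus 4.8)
The plan is to bound each factor in the summand of (\ref{G}) and reduce the convergence question to that of a standard (weight-zero) Eisenstein series, then upgrade absolute convergence to membership in $L^2(\Gamma,l)$. First I would estimate the antiderivatives $I_n$. Since $f$ is a cusp form of weight $k$ it decays like $e^{-2\pi y}$ as $y = \Im z \to \infty$ in the standard fundamental domain, and from the integral representation (\ref{7.1.1}) one sees that $I_n(z)$ grows at most polynomially in $z$ near the real axis and decays near $\infty$; more precisely, using modularity one shows $\Im(\gamma z)^{k/2}|I_n(\gamma z)| \ll \Im(\gamma z)^{k/2 - n}$ uniformly (this is exactly the kind of estimate packaged into the scaling $\alpha(\n,\m)$, since $k/2 - n$ is the power of $\Im(\gamma z)$ that makes $\Im(\gamma z)^{k/2}|I_n(\gamma z)|$ bounded for $\Im(\gamma z)$ large, while near the real axis $I_n$ is bounded by a constant). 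Thus $|j_\gamma(z)^{-l}| = 1$ and each normalized factor $I_\nn(\gamma z)/(-2i)^{\nn} \cdot \Im(\gamma z)^{k/2-\nn}$ (and its conjugate analogue) is bounded by an absolute constant times $\min(1, \Im(\gamma z)^{-c})$ for suitable $c>0$ depending on $\nn$.

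Next, collecting these bounds, the absolute value of the $\gamma$-term in (\ref{G}) is dominated by a constant times $\Im(\gamma z)^{\Re s - (|\n|+|\m|)\cdot k/2 - \alpha(\n,\m)} \cdot (\text{bounded})$ wait — more carefully, writing $\sigma = \Re s$, the term is $\ll \Im(\gamma z)^{\sigma + \alpha(\n,\m)} \prod \Im(\gamma z)^{-(k/2-\nn)}\prod\Im(\gamma z)^{-(k/2-\mm)} = \Im(\gamma z)^{\sigma}$ when $\Im(\gamma z)$ is large, and $\ll \Im(\gamma z)^{\sigma + \alpha(\n,\m)}$ when $\Im(\gamma z)$ is small (bounded, since $I_n$ is bounded there). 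Hence $|G_{\n,\m,l}(z,s)|$ is bounded termwise by $C\sum_{\gamma\in\Gamma_\infty\backslash\Gamma}\Im(\gamma z)^{\sigma}$ plus a tail that is even smaller, and this majorant is (a constant multiple of) the real-analytic Eisenstein series $E(z,\sigma)$, which converges absolutely and locally uniformly for $\sigma > 1$. That already gives convergence for $\Re s > 1$; the stronger hypothesis $\Re s > 1 + |\n| + |\m|$ in the statement is what one needs for the $L^2$-claim, because near the cusp the crude bound $|I_n(\gamma z)| \ll 1$ must be replaced, when estimating $\int_{\Gamma\backslash\H}|G|^2\,d\mu$, by something that controls the growth of $|G_{\n,\m,l}(z,s)|$ as $\Im z\to\infty$ in the fundamental domain: there the identity coset contributes $\prod I_\nn(z)\prod\overline{I_\mm(z)}\,y^{s+\alpha}$ which decays exponentially, but the off-diagonal cosets contribute, after unfolding, essentially $\sum_{c}\frac{1}{c^{2\Re s}}\cdot(\text{polynomial in the }I\text{'s})$, and one checks the $y$-integral $\int^\infty |G|^2 y^{-2}\,dy$ converges precisely in the stated range.

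Concretely for the $L^2$-bound I would unfold: by the standard Rankin--Selberg/unfolding trick, $\|G_{\n,\m,l}(\cdot,s)\|^2$ is controlled by $\int_{\Gamma_\infty\backslash\H} |G_{\n,\m,l}(z,s)| \cdot |\prod_{\nn}I_\nn(z)(-2i)^{-\nn}\prod_{\mm}\overline{I_\mm(z)}(2i)^{-\mm}| \, y^{\Re s + \alpha(\n,\m)}\,d\mu(z)$, and on the strip $\Gamma_\infty\backslash\H$ one splits into $y\le 1$ and $y\ge 1$; for $y\ge 1$ one uses the exponential decay of each $I_\nn$ coming from the cuspidality of $f$ (so this part is entirely harmless), while for $y\le 1$ one uses the termwise majorant $|G_{\n,\m,l}(z,s)| \ll E(z,\Re s)$ together with $|I_\nn(z)| \ll 1$ and the convergence of $\int_{0}^{1} E(z,\Re s)\,y^{\Re s + \alpha - 2}\,dx\,dy$, which holds when $\Re s$ is large enough — tracking the exponents shows $\Re s > 1 + |\n| + |\m|$ suffices (each of the $|\n|+|\m|$ factors $|I_\nn(z)|$, bounded by a constant near the real line, costs one extra unit of $\Re s$ to absorb in the worst corner of the strip). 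I expect the main obstacle to be precisely this last bookkeeping: getting uniform (in $z$) polynomial bounds on the iterated antiderivatives $I_n(\gamma z)$ that are sharp enough near both the cusp and the real axis — i.e., proving cleanly that $y^{k/2}|I_n(z)| \ll \min(y^{k/2}, y^{k/2-n})$-type estimate — since everything else reduces to comparison with the classical Eisenstein series $E(z,\sigma)$. The rest (automorphy, local uniformity) is formal given absolute convergence.
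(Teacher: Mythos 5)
There is a genuine gap, and it sits exactly at the estimate your whole reduction rests on: the claim that each $I_\nn$ with $\nn\geq 1$ ``is bounded by a constant near the real axis'' (equivalently, that the normalized factors $\Im(\gamma z)^{k/2-\nn}|I_\nn(\gamma z)|$ are uniformly bounded on $\H$). This is not proved, your two versions of it ($|I_n(w)|\ll \Im(w)^{-n}$ versus boundedness near $\R$) are not even consistent with each other, and for $\nn=k/2$ it is demonstrably false: if $I_1,\dots,I_{k/2}$ were bounded on $\{0<\Im w\leq 1\}$, then evaluating the central-value formula of Lemma \ref{spectrallemma} at $z=i$ (where $\Im(\gamma i)=(c^2+d^2)^{-1}\leq 1$, where $|c^{-j}j(\gamma,i)^{\mp j}|\ll 1$ for a representative with $|d|\leq c$, and where the $I_{k/2-j}(i)$ are constants) would give $L(f,a/c,k/2)=O_f(1)$ uniformly in $a/c$, contradicting Theorem \ref{mainthm}, which forces a positive proportion of $r\in T(X)$ to satisfy $|L(f,r,k/2)|\gg(\log c(r))^{1/2}$. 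Consequently your conclusion that the series already converges absolutely for $\Re s>1$, with the extra $|\n|+|\m|$ needed only for the $L^2$ statement, is not established; in the paper the half-plane $\Re s>1+|\n|+|\m|$ is needed already for absolute convergence. The paper's proof uses only the termwise Hecke bound (\ref{Heckebound}), which gives $I_n(z)\ll \sum_m m^{k/2-n}e^{-2\pi my}\ll y^{n-k/2-1}$; each normalized factor then costs one extra power of $\Im(\gamma z)$, the termwise majorant becomes $E(z,\sigma-|\n|-|\m|)$, and this converges precisely for $\sigma>1+|\n|+|\m|$.

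The $L^2$ part is also handled differently, and your version inherits the same problem: besides reusing $|I_\nn(z)|\ll1$ on $\{y\leq1\}$, your majorant $E(z,\sigma)$ is not bounded on a horizontal strip (it blows up inside the horoballs at the other cusps), so the convergence of $\int_{y\leq1}E(z,\sigma)\,y^{\sigma+\alpha-2}\,dx\,dy$ as you set it up needs genuine care. The paper avoids unfolding altogether: with the majorant $E(z,\sigma-|\n|-|\m|)$ in hand, it shows $G_{\n,\m,l}(\sigma_\mathfrak{b}z,s)\to0$ as $y\to\infty$ at every cusp $\mathfrak{b}$, by combining the tail estimate $\sum_{\Id\neq\gamma}(\Im\gamma\sigma_\mathfrak{b}z)^{w}\ll y^{1-\Re w}$ of \cite[Lemma 3.2]{PeRi} with the exponential decay $I_n(z)\ll e^{-\pi y}$ at $\infty$ and the finiteness of the limits of $I_n$ at the other cusps (a much weaker statement than boundedness near all of $\R$); vanishing at the cusps then gives boundedness on a fundamental domain, hence membership in $L^2(\Gamma,l)$ since the volume is finite. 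If you want to salvage your write-up, replace the boundedness claim by $I_n(z)\ll y^{n-k/2-1}$ (or $y^{n-k/2}$ for $n<k/2$, obtained by integrating $y^{k/2}|f|\ll1$), after which the stated half-plane drops out immediately, and obtain the $L^2$ claim by the cusp-decay argument rather than by Rankin--Selberg unfolding.
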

\begin{proof} By Hecke's bound on the coefficients of cusp forms $|a_f(n)|\ll n^{k/2}$ (coming from (\ref{Heckebound})), we have 
\begin{equation} \label{heckebound}I_n(z) \ll_n \sum_{m=1}^\infty m^{k/2-n} e^{-2\pi my}\ll_{n,k} y^{-(k/2+1-n)},  \end{equation}
using that $r!=\Gamma(r+1)=\int_0^\infty e^{-x}x^rdx$. This gives
\begin{align} 
\nonumber &\sum_{\gamma \in \Gamma_\infty \backslash \Gamma} \Biggr| j_\gamma (z)^{-l}\left( \prod_{\nn \in \n} \frac{I_\nn(\gamma z)}{(-2i)^\nn}\right) \left( \prod_{\mm \in \m} \frac{\overline{I_\mm(\gamma z)}}{(2i)^\mm}\right) \Im (\gamma z)^{s+\alpha(\n, \m)}\Biggr|\\
\label{boundgoldeis}&\ll_{\n,\m}  \sum_{\gamma\in \Gamma_\infty \backslash \Gamma} \Im (\gamma z)^{\sigma - |A|-|B|}\\
\nonumber &= E\left(z, \sigma - |\n|-|\m|\right),\end{align} 
where $s=\sigma+it$. Since the non-holomorphic Eisenstein series converges absolutely for $\Re s>1$, we get that $G_{\n, \m, l}(z,s) $ convergences absolutely (and locally uniformly in $s$ and $z$) in the desired half-plane. 

For any cusp $\mathfrak{b}$ of $\Gamma$ we have by \cite[Lemma 3.2]{PeRi}
$$ \sum_{\Id\neq \gamma \in \Gamma_\infty\backslash \Gamma} (\Im \gamma \sigma_\mathfrak{b} z)^w \ll y^{1-\Re w}, $$
as $y\rightarrow \infty$ (meaning that $\sigma_\mathfrak{b} z\rightarrow \mathfrak{b}$) and uniformly for $\Re w\in[1+\eps, Y]$ with $Y>1+\eps$ and $\eps>0$. Thus we get using (\ref{boundgoldeis}) the following bound
\begin{align*} &G_{\n, \m, l}(\sigma_\mathfrak{b}z,s)\\
&\ll_{\n,\m}  \left( \prod_{\nn \in \n} | I_\nn(\sigma_\mathfrak{b}z)|\right)\left( \prod_{\mm \in \m}|I_\mm(\sigma_\mathfrak{b}z)|\right) \Im (\sigma_\mathfrak{b}z)^{\sigma+\alpha(\n, \m)} +  y^{1-(\sigma-|\n|-|\m|)}, \end{align*}
at any cusp $\mathfrak{b}$ of $\Gamma$, with $\sigma=\Re s\in [1+\eps,Y]$. First of all if $\mathfrak{b}=\infty$, then (\ref{heckebound}) shows that $ I_n(z) \ll e^{-\pi y} $ as $y\rightarrow \infty$, which implies that $G_{\n, \m, l}(z,s)\rightarrow 0$ as $z\rightarrow \infty$ for $\Re s>1+|\n|+|\m|$. At a cusp $\mathfrak{b}\neq \infty$ we see using (\ref{heckebound}) that 
\begin{align*} G_{\n, \m, l}(\sigma_\mathfrak{b}z,s)\ll_{\n,\m}   \Im (\sigma_\mathfrak{b}z)^{-|\n|-|\m|+\sigma} +  y^{1-(\sigma-|\n|-|\m|)}. \end{align*}
Now since $ \Im (\sigma_\mathfrak{b}z)\rightarrow 0$ as $y\rightarrow \infty$, we conclude that $G_{\n, \m, l}(z,s)\rightarrow 0,$ as $z\rightarrow \mathfrak{b}$ for any cusp $\mathfrak{b}$ of $\Gamma$ when $\Re s>1+|\n|+|\m|$. This implies that $G_{\n, \m, l}(z,s)\in L^2(\Gamma, l)$.
 \end{proof}

\subsection{The recursion formula} 
In order to understand the pole structure of $G_{\n,\m,l}(z,s)$, we will use certain recursion formulas involving the resolvent and the raising and lowering operators. First of all we will record how the raising and lowering operators act on the constituents of Poincaré series.
\begin{lemma}
Let $h: \H\rightarrow \C$ be a smooth function and $l$ an even integer. Then we have
\begin{align*}
K_l[h(\gamma z) \Im &(\gamma z)^s j_\gamma(z)^{-l}]\\
&=\left(2i \frac{\partial h}{\partial z}(\gamma z) \Im (\gamma z)^{s+1}+(s+l/2)h(\gamma z) \Im (\gamma z)^s\right)j_\gamma(z)^{-l-2},\\
L_l[h(\gamma z) \Im &(\gamma z)^s j_\gamma(z)^{-l}]\\
&=\left(2i \frac{\partial h}{\partial \overline{z}}(\gamma z) \Im (\gamma z)^{s+1}-(s-l/2)h(\gamma z) \Im (\gamma z)^s\right)j_\gamma(z)^{-l+2}
\end{align*}
for any $\gamma\in \PSL_2(\R)$.
\end{lemma}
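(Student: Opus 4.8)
The plan is to verify both identities by a direct computation, exploiting the fact that the expression inside the brackets has a very rigid structure: a function evaluated at $\gamma z$, times a power of $\Im(\gamma z)$, times a power of the automorphy factor $j_\gamma(z)$. The key technical point is how $K_l$ and $L_l$ interact with the substitution $z \mapsto \gamma z$; this is where the weight shift by $\pm 2$ and the corresponding change in $j_\gamma$ comes from. So first I would recall (or establish) the covariance of the Maa{\ss} operators under $\PSL_2(\R)$: writing $\delta_l := K_l$ in local coordinates $(z-\overline z)\partial_z + l/2$ and similarly for $L_l$, one has for $g$ smooth and $\gamma \in \PSL_2(\R)$ the intertwining relations
\begin{align*}
K_l\bigl[ g(\gamma z)\, j_\gamma(z)^{-l}\bigr] &= \bigl( (K_l g)(\gamma z)\bigr) j_\gamma(z)^{-l-2},\\
L_l\bigl[ g(\gamma z)\, j_\gamma(z)^{-l}\bigr] &= \bigl( (L_l g)(\gamma z)\bigr) j_\gamma(z)^{-l+2},
\end{align*}
which follow from the chain rule together with the standard transformation formulas $\Im(\gamma z) = \Im(z)/|j(\gamma,z)|^2$ and $\partial_z(\gamma z) = j(\gamma,z)^{-2}$, plus the cocycle relation (\ref{j}). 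Granting this, the problem reduces to computing $K_l$ and $L_l$ applied to the single-variable function $g(w) = h(w)\,\Im(w)^s$ and then re-substituting $w = \gamma z$.

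Second, I would carry out that one-variable computation. With $w = u+iv$, $\Im(w)^s = v^s$ and $(w-\overline w) = 2iv$, we get
\begin{align*}
K_l\bigl[h(w)v^s\bigr] &= 2iv\,\frac{\partial}{\partial w}\bigl(h(w)v^s\bigr) + \tfrac{l}{2}h(w)v^s = 2iv\Bigl(\frac{\partial h}{\partial w}v^s + h\cdot \tfrac{s}{2i}v^{s-1}\Bigr) + \tfrac{l}{2}h v^s\\
&= 2i\frac{\partial h}{\partial w}\,v^{s+1} + \bigl(s + \tfrac{l}{2}\bigr) h\, v^s,
\end{align*}
using $\partial v/\partial w = \partial(\tfrac{w-\overline w}{2i})/\partial w = \tfrac{1}{2i}$. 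The analogous computation with $L_l$ uses $\partial v/\partial \overline w = -\tfrac{1}{2i}$, which flips the sign of the $s$-term and raises the power of $\partial/\partial\overline z$:
$$L_l\bigl[h(w)v^s\bigr] = 2i\frac{\partial h}{\partial \overline w}\,v^{s+1} - \bigl(s - \tfrac{l}{2}\bigr) h\, v^s.$$
Now substituting $w = \gamma z$ and multiplying by $j_\gamma(z)^{-l}$, the intertwining relations from the first step produce exactly the two displayed formulas, with the weight correctly shifted to $-l-2$ (resp.\ $-l+2$).

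The main obstacle is bookkeeping rather than conceptual: one must be careful that the intertwining relations hold with the specific normalization of $j_\gamma(z) = j(\gamma,z)/|j(\gamma,z)|$ used here (as opposed to $j(\gamma,z)$ itself), since $|j(\gamma,z)|$ enters through $\Im(\gamma z)$ and one needs the holomorphic and anti-holomorphic pieces to separate cleanly. Concretely, when one differentiates $h(\gamma z)\,\Im(\gamma z)^s\, j_\gamma(z)^{-l}$ directly with $\partial_z$, the term hitting $j_\gamma(z)^{-l}$ involves $\partial_z \log j_\gamma(z) = \tfrac{1}{2}\partial_z\log j(\gamma,z) - \tfrac{1}{2}\partial_z \log \overline{j(\gamma,z)}$, and the second summand must combine with the derivative of $\Im(\gamma z)^s$ so that the surviving expression depends on $\gamma$ only through $\Im(\gamma z)$ and $j_\gamma(z)^{-l-2}$. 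This cancellation is exactly what makes $K_l$ raise the weight by $2$, and checking it once (say via $\partial_z \Im(\gamma z) = \Im(\gamma z)\cdot\tfrac{1}{2iv}\cdot(\text{something})$, or more cleanly by noting $\Im(\gamma z)^s j_\gamma(z)^{-l}$ is itself the weight-$l$ analogue of a power of $y$) is the one genuinely delicate step. Once it is done for a general smooth $h$, the lemma follows immediately, and the proof is complete.
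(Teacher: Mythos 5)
Your proposal is correct and follows essentially the same route as the paper: both invoke the intertwining relation $K_l[F(\gamma z)\,j_\gamma(z)^{-l}]=(K_l F)(\gamma z)\,j_\gamma(z)^{-l-2}$ (and its $L_l$ analogue) to reduce to the $\gamma=\Id$ case, then compute $K_l[h\,v^s]$ and $L_l[h\,v^s]$ by a direct local calculation. You simply carry out the local calculation in $(\partial_w,\partial_{\overline w})$ coordinates rather than in $(\partial_x,\partial_y)$ coordinates as the paper does, which is a cosmetic difference; your worked-out derivatives and sign-flips are all correct.
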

\begin{proof}
Using the intertwining relation;
$$ K_l \left(j_\gamma(z)^{-l} F(\gamma z)\right)= j_\gamma(z)^{-l-2} (K_l F)(\gamma z),   $$
valid for any smooth function $F:\H\rightarrow \C$, we reduce the problem to proving the following identity;
$$  K_l h(z)y^s\overset{?}{=} \left(y\left(i\frac{\partial}{\partial x}+ \frac{\partial}{\partial y}\right)+\frac{k}{2}\right)h(z)y^s, $$
and similar for the lowering operator. This can be done by a straightforward calculation. \end{proof}
This yields the following useful formula.
\begin{cor}
Let $h: \H\rightarrow \C$ be a smooth function and $l$ an even integer. Then we have
\begin{align*} (\Delta_l-\lambda(s))[h(\gamma z) \Im (\gamma z)^s j_\gamma(z)^{-l}]=&-4\frac{\partial^2 h }{\partial z\partial \overline{z}}(\gamma z)\Im (\gamma z)^{s+2}j_\gamma(z)^{-l}\\
&-2i(s-l/2)\frac{\partial h}{\partial z}(\gamma z) \Im(\gamma z)^{s+1}j_\gamma(z)^{-l}\\
&+2i(s+l/2)\frac{\partial h}{\partial \overline{z}}(\gamma z) \Im(\gamma z)^{s+1}j_\gamma(z)^{-l} \end{align*}
for $s\in \C$ and $\gamma\in \SL_2(\R)$.
\end{cor}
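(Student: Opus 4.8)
The plan is to read off the result directly from the preceding lemma by iterating it, using the factorization $\Delta_l = K_{l-2}L_l + \lambda(l/2)$ of the weight $l$ Laplacian from (\ref{delta}) together with $\lambda(s)=s(1-s)$. First I would apply the lowering operator: by the lemma,
$$L_l\bigl[h(\gamma z)\,\Im(\gamma z)^s\, j_\gamma(z)^{-l}\bigr]=\left(2i\frac{\partial h}{\partial\overline{z}}(\gamma z)\,\Im(\gamma z)^{s+1}-(s-l/2)\,h(\gamma z)\,\Im(\gamma z)^{s}\right)j_\gamma(z)^{-l+2}.$$
This is a sum of two expressions, each again of the shape $(\text{smooth function})(\gamma z)\cdot\Im(\gamma z)^{(\cdot)}\cdot j_\gamma(z)^{-(l-2)}$: the first with underlying function $2i\,\partial_{\overline{z}}h$ and weight parameter $s+1$, the second with underlying function $-(s-l/2)h$ and weight parameter $s$.

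Next I would apply $K_{l-2}$ to each of these two terms, invoking the lemma once more with $l$ replaced by $l-2$ and $s$ replaced by the corresponding weight parameter. For the first term this gives, after using $2i\cdot 2i=-4$ and $(s+1)+(l-2)/2=s+l/2$,
$$\left(-4\frac{\partial^2 h}{\partial z\partial\overline{z}}(\gamma z)\,\Im(\gamma z)^{s+2}+2i(s+l/2)\frac{\partial h}{\partial\overline{z}}(\gamma z)\,\Im(\gamma z)^{s+1}\right)j_\gamma(z)^{-l};$$
for the second term it gives, using $s+(l-2)/2=s+l/2-1$,
$$\left(-2i(s-l/2)\frac{\partial h}{\partial z}(\gamma z)\,\Im(\gamma z)^{s+1}-(s-l/2)(s+l/2-1)\,h(\gamma z)\,\Im(\gamma z)^{s}\right)j_\gamma(z)^{-l}.$$
Summing these two lines, then adding $\lambda(l/2)\,h(\gamma z)\,\Im(\gamma z)^{s}j_\gamma(z)^{-l}$ yields $\Delta_l[h(\gamma z)\,\Im(\gamma z)^{s}j_\gamma(z)^{-l}]$, and subtracting $\lambda(s)\,h(\gamma z)\,\Im(\gamma z)^{s}j_\gamma(z)^{-l}$ yields the claimed identity --- provided the total coefficient of $h(\gamma z)\,\Im(\gamma z)^{s}j_\gamma(z)^{-l}$ vanishes, i.e.
$$-(s-l/2)(s+l/2-1)+\lambda(l/2)-\lambda(s)=0.$$
This is an elementary identity: expanding $\lambda(l/2)=(l/2)(1-l/2)$ and $\lambda(s)=s(1-s)$, the left side collapses to $(-s^2+s)+(l/2-l^2/4)-(l/2-l^2/4)-(s-s^2)=0$. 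The three surviving terms are exactly those in the statement, with $\gamma\in\SL_2(\R)$ (the lemma being valid for all $\gamma\in\PSL_2(\R)$).

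There is no genuine obstacle here; the proof is pure bookkeeping, and the only points requiring care are the arithmetic of the weight shifts $l\mapsto l-2\mapsto l$ and the cancellation of the $\Im(\gamma z)^{s}$ term. As a consistency check one can instead run the computation through the other factorization $\Delta_l=L_{l+2}K_l+\lambda(-l/2)$ (raise first, then lower); it must produce the same answer, and the corresponding cancellation is $-(s+l/2)(s-l/2-1)+\lambda(-l/2)-\lambda(s)=0$, again elementary.
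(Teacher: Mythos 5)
Your proof is correct. The paper gives no explicit argument for this corollary, but the intended route is clearly the one you take: apply the preceding lemma twice through the factorization $\Delta_l = K_{l-2}L_l + \lambda(l/2)$, then check that the coefficient of $h(\gamma z)\Im(\gamma z)^s j_\gamma(z)^{-l}$ vanishes, namely $-(s-l/2)(s+l/2-1)+\lambda(l/2)-\lambda(s)=0$. Your bookkeeping of the weight shifts, the sign from $2i\cdot 2i=-4$, and the alternate consistency check via $\Delta_l = L_{l+2}K_l+\lambda(-l/2)$ are all accurate.
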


From the above we will deduce the main recursion formula which will allow us to inductively understand the pole structure of $G_{\n,\m,l}(z,s)$. To write down the formula we will introduce the following convenient notation for $\nn\in \n$;
$$\n_\nn:=(\n\backslash\{\nn\})\cup \{\nn-1\}.$$
In this notation we have for positive multisets $A$ the following useful relation;
$$ \frac{\partial}{\partial z}\prod_{\nn \in \n} I_\nn(z) = \sum_{\nn\in\n} \prod_{\nn' \in \n_\nn} I_{\nn'}(z), $$
by the Leibniz rule. Thus by summing over $\gamma \in \Gamma_\infty\backslash\Gamma$ and using Lemma \ref{regionofconv}, we arrive at the following lemma.
\begin{lemma} \label{induction}
Let $\n,\m$ be positive multisets and $G_{\n,\m,l}(z,s)$ as above. Then we have  
\begin{align}
\label{weightup} K_{l} G_{\n,\m,l}(z,s)=  (s+\alpha(\n,\m)+l/2)G_{\n,\m,l+2}(z,s)-\sum_{\nn\in\n}G_{\n_\nn,\m,l+2}(z,s),\\
\label{weightdown}L_{l} G_{\n,\m,l}(z,s)= -(s+\alpha(\n,\m)-l/2)G_{\n,\m,l-2}(z,s)+\sum_{\mm\in\m}G_{\n,\m_\mm,l-2}(z,s)
\end{align}
and
\begin{align}
\nonumber G_{\n, \m,l}(z,s)= R(s+\alpha(\n,\m),l)\Biggr(&-\sum_{\nn \in \n, \mm \in \m} G_{\n_\nn, \m_\mm,l}(z,s)\\
 \nonumber &+(s+\alpha(\n,\m) -l/2)\sum_{\nn \in \n} G_{\n_\nn, \m,l}(z,s)\\
\label{rec} &+(s+\alpha(\n,\m) +l/2)\sum_{\mm \in \m} G_{\n, \m_\mm,l}(z,s) \Biggr),
\end{align}
valid apriori for $\Re s>1+|\n|+|\m|$.
\end{lemma}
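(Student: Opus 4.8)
The plan is to derive the three formulas in Lemma \ref{induction} by direct computation, using the action of the raising and lowering operators on the individual terms of the Poincaré series $G_{\n,\m,l}(z,s)$, and then to apply the product rules (\ref{prodlow}) together with the relation $\Delta_l = K_{l-2}L_l + \lambda(l/2) = L_{l+2}K_l + \lambda(-l/2)$ from (\ref{delta}) to produce the resolvent identity (\ref{rec}). The key observation is that each summand of $G_{\n,\m,l}(z,s)$ has the shape $h(\gamma z)\Im(\gamma z)^{s+\alpha(\n,\m)}j_\gamma(z)^{-l}$ with $h(w) = \prod_{\nn\in\n} I_\nn(w)(-2i)^{-\nn} \cdot \prod_{\mm\in\m}\overline{I_\mm(w)}(2i)^{-\mm}$, so the two preceding lemmas (on $K_l,L_l$ and on $\Delta_l - \lambda(s)$ applied to such a term) apply term-by-term.

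First I would establish (\ref{weightup}). Applying the $K_l$-formula from the first lemma of this subsection to each term, with effective spectral parameter $s' := s + \alpha(\n,\m)$, gives a main term $(s' + l/2)h(\gamma z)\Im(\gamma z)^{s'}j_\gamma(z)^{-l-2}$ plus $2i\,\tfrac{\partial h}{\partial z}(\gamma z)\Im(\gamma z)^{s'+1}j_\gamma(z)^{-l-2}$. Since $h$ is antiholomorphic in the $\overline{I_\mm}$ variables, $\tfrac{\partial h}{\partial z}$ only hits the holomorphic factors, and by the Leibniz rule $\tfrac{\partial}{\partial z}\prod_{\nn\in\n}I_\nn(w) = \sum_{\nn\in\n}\prod_{\nn'\in\n_\nn}I_{\nn'}(w)$ using $I_\nn' = I_{\nn-1}$; the factor of $2i$ combines with the normalization $(-2i)^{-\nn}$ versus $(-2i)^{-(\nn-1)}$ to produce exactly the constant matching $G_{\n_\nn,\m,l+2}$ — here one must also check that the exponent shift $\Im(\gamma z)^{s'+1}$ against $\alpha(\n_\nn,\m) = \alpha(\n,\m) - 1$ leaves $s$ unchanged, which it does. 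Summing over $\gamma\in\Gamma_\infty\backslash\Gamma$ (justified by absolute convergence from Lemma \ref{regionofconv} in the stated half-plane, after possibly shrinking it, and interchanging $K_l$ with the sum by local uniform convergence) yields (\ref{weightup}); the lowering identity (\ref{weightdown}) is entirely parallel, now differentiating in $\overline{z}$ so only the $\overline{I_\mm}$ factors contribute, with a sign change from the $-(s'-l/2)$ term.

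For the resolvent formula (\ref{rec}), I would compute $(\Delta_l - \lambda(s'))$ applied termwise using the Corollary above: the output is $-4\tfrac{\partial^2 h}{\partial z\partial\overline{z}}(\gamma z)\Im(\gamma z)^{s'+2}j_\gamma(z)^{-l}$, a term $-2i(s'-l/2)\tfrac{\partial h}{\partial z}(\gamma z)\Im(\gamma z)^{s'+1}j_\gamma(z)^{-l}$, and $+2i(s'+l/2)\tfrac{\partial h}{\partial\overline{z}}(\gamma z)\Im(\gamma z)^{s'+1}j_\gamma(z)^{-l}$. The mixed second derivative hits one holomorphic and one antiholomorphic factor, giving the double sum $\sum_{\nn\in\n,\mm\in\m}$ term; the two first-order pieces give the single sums, each with the appropriate $(s'\mp l/2)$ coefficient, after the bookkeeping of the normalizing constants $(-2i),(2i)$ and the $\Im(\gamma z)$-exponent matches $\alpha$ of the reduced multisets. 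Summing over $\gamma$ and recognizing that $G_{\n,\m,l}(z,s) \in L^2(\Gamma,l)$ (Lemma \ref{regionofconv}), so that $(\Delta_l - \lambda(s')) G_{\n,\m,l}(z,s)$ equals the displayed combination of lower Poincaré series, I then apply the resolvent $R(s',l) = R(s+\alpha(\n,\m),l)$ — valid since $\Re s' > 1$ places us away from the singular set — to both sides; since $(\Delta_l - \lambda(s'))$ is inverted exactly by $R(s',l)$ on $L^2(\Gamma,l)$, this gives (\ref{rec}).

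The main obstacle is the constant-chasing: one must verify that every factor of $2i$ or $-2i$ coming from the differentiation formulas cancels correctly against the chosen normalizations $(-2i)^{-\nn}$ and $(2i)^{-\mm}$ in the definition (\ref{G}), and simultaneously that the shifts in the $\Im(\gamma z)$-exponent are exactly absorbed by the definition of $\alpha(\n,\m)$ — in particular $\alpha(\n_\nn,\m) = \alpha(\n,\m)-1$ and $\alpha(\n,\m_\mm) = \alpha(\n,\m)-1$, so that raising the weight and lowering a multiset entry together keep the base variable $s$ fixed. A secondary point requiring care is the interchange of the differential operators $K_l, L_l, \Delta_l$ with the infinite sum over $\Gamma_\infty\backslash\Gamma$ and the passage of the identity, initially valid only for $\Re s$ large, to the region $\Re s > 1 + |\n| + |\m|$ claimed in the lemma; both follow from the local uniform absolute convergence in Lemma \ref{regionofconv} together with standard arguments for differentiating automorphic Poincaré series term by term, but should be stated explicitly.
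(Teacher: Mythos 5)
Your proposal is correct and follows essentially the same route as the paper: apply the preceding lemma and corollary (the action of $K_l$, $L_l$ and $\Delta_l-\lambda(s)$ on terms $h(\gamma z)\Im(\gamma z)^{s}j_\gamma(z)^{-l}$) termwise with $s'=s+\alpha(\n,\m)$, use the Leibniz rule $\frac{\partial}{\partial z}\prod_{\nn\in\n}I_\nn=\sum_{\nn\in\n}\prod_{\nn'\in\n_\nn}I_{\nn'}$, track the $(\pm 2i)$-normalizations and the shift $\alpha(\n_\nn,\m)=\alpha(\n,\m)-1$, sum over $\Gamma_\infty\backslash\Gamma$ via the absolute convergence of Lemma \ref{regionofconv}, and invert $\Delta_l-\lambda(s+\alpha(\n,\m))$ by the resolvent to get (\ref{rec}). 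Your constant- and sign-chasing matches the paper's normalization, so no further changes are needed.
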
 
This lemma will turn out to be extremely useful.
\begin{remark}The recursion formula (\ref{rec}) is the reason why we have $2i$ and $-2i$ in the denominators in the definition of $G_{\n,\m,l}(z,s)$ and why we have the shift $\alpha(\n,\m)$. \end{remark}

Define {\it the total weight} of $\n,\m$ (and of $G_{\n,\m,l}(z,s)$) as the quantity
$$ \Sigma(\n, \m):= \sum_{\nn \in \n}\nn +\sum_{\mm\in\m}\mm.$$
Then we observe that all Poincaré series on the right-hand side in the recursion formula (\ref{rec}) have strictly smaller total weight than the one on the left-hand side. This will allow us to do an inductive argument on the total weight, when determining the pole structure of the Poincaré series. 

As a first application of Lemma \ref{induction}, we will show meromorphic continuation of $G_{\n,\m,l}(z,s)$ to $\Re s>1/2$. Firstly we will handle the case $l=0$ using (\ref{rec}). This case is easiest to handle since the poles of $R(s,0)$ all satisfy $\Re s\leq 1$. Then we will use (\ref{weightup}) and (\ref{weightdown}) to get the result for general (even) weights $l$.

\begin{prop} \label{analyticont}
Let $A,B$ be two multi-sets such that $|A|+|B|>0$ and $l$ an even integer. Then the Poincaré series $G_{\n, \m,l}(z,s)$ admits meromorphic continuation to the half-plane $\Re s>1/2$ satisfying the following;
\begin{enumerate}[(i)]
\item $G_{\n, \m,l}(z,s)$ defines an element of $L^2(\Gamma, l)$ at all regular points.
\item The poles of $G_{\n, \m,l}(z,s)$ in $1/2< \Re s\leq 1$ are contained in $\mathcal{P}$ (defined as in (\ref{P})).
\item $G_{\n, \m,l}(z,s)$ is regular for $\Re s>1$. 
\end{enumerate}  
\end{prop}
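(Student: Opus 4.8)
The plan is to run an induction on the total weight $\Sigma(\n,\m)$, using the recursion formula of Lemma \ref{induction} as the engine and treating the weight-$0$ case first, where the resolvent $R(s,0)$ is best behaved. For the base of the induction we take $\Sigma(\n,\m)$ minimal; since $\n,\m$ are positive multisets with $|\n|+|\m|>0$, the extreme case is $\n,\m$ consisting only of $1$'s, and more generally once we peel off one element via $\n\mapsto\n_\nn$ repeatedly we eventually reach multisets containing a $0$, at which point the pull-out identities (\ref{pullout1})--(\ref{pullout}) express $G$ in terms of $y^{k/2}f$ (or its conjugate) times a $G$ of strictly smaller $|\n|+|\m|$; at the very bottom $|\n|+|\m|=1$ and $\n_\nn$ or $\m_\mm$ would have an element equal to $0$, so the right-hand side of (\ref{rec}) only involves terms that are handled by the pull-out identities or are literally the non-holomorphic Eisenstein series after pulling out $y^{k/2}f$. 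So the induction is really a double induction: outer on $\Sigma(\n,\m)$ (equivalently $|\n|+|\m|$ together with the pull-out reductions), inner bookkeeping to reach weight $0$.

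First I would establish the case $l=0$. By Lemma \ref{regionofconv}, $G_{\n,\m,0}(z,s)$ is an $L^2(\Gamma,0)$-valued holomorphic function for $\Re s>1+|\n|+|\m|$, and formula (\ref{rec}) expresses it as $R(s+\alpha(\n,\m),0)$ applied to a $\C$-linear combination of Poincar\'e series of strictly smaller total weight (all with $l=0$), multiplied by polynomials in $s$. By the inductive hypothesis those lower-weight series continue meromorphically to $\Re s>1/2$ with poles in $\mathcal P$ in the strip $1/2<\Re s\le 1$ and are regular for $\Re s>1$; moreover they stay in $L^2(\Gamma,0)$ at regular points, so the resolvent can be applied there. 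The shift $\alpha(\n,\m)\ge 0$ is what makes this work: for $\Re s>1/2$ we have $\Re(s+\alpha(\n,\m))>1/2$, so $R(s+\alpha,0)$ is a meromorphic operator with poles only in $\mathcal P\cup\{1,\dots,0\}=\mathcal P$ (the set $\{1,\dots,k/2\}$ is empty at weight $0$ in the relevant sense — more precisely the poles of $R(\cdot,0)$ in $\Re s>1/2$ are exactly $\mathcal P$), and since $\alpha\ge 0$ a pole of $R$ at some $s_i\in\mathcal P$ comes from $s=s_i-\alpha$, which lies in $\Re s\le 1$; combined with the pole locations of the lower-weight $G$'s we conclude the three assertions for $G_{\n,\m,0}$ by unwinding (\ref{rec}). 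The $L^2$ statement at regular points follows because $R(s,0)$ maps $L^2$ to $L^2$ boundedly away from its poles, using Lemma \ref{resbnd}.

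Next, to pass from weight $0$ to general even $l$, I would use the raising and lowering identities (\ref{weightup}) and (\ref{weightdown}). By the symmetry (\ref{conj}) it suffices to treat $l\ge 0$, and then one climbs from $l=0$ to $l=2,4,\dots$ in steps of $2$: rearranging (\ref{weightup}) gives
$$ G_{\n,\m,l+2}(z,s)=\frac{1}{s+\alpha(\n,\m)+l/2}\Bigl(K_l G_{\n,\m,l}(z,s)+\sum_{\nn\in\n}G_{\n_\nn,\m,l+2}(z,s)\Bigr), $$
where $G_{\n,\m,l}$ is known meromorphic with the required pole structure by the inductive step in $l$, the operator $K_l$ preserves meromorphy and (acting on smooth $L^2$ automorphic functions, with appropriate domain considerations) does not create new poles, the scalar prefactor $1/(s+\alpha+l/2)$ has its only pole at $s=-\alpha-l/2\le 1/2$ hence outside our half-plane, and the terms $G_{\n_\nn,\m,l+2}$ have strictly smaller total weight so are covered by the outer induction. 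This simultaneously handles meromorphic continuation, pole location in $\mathcal P$ within $1/2<\Re s\le 1$, and regularity for $\Re s>1$. The $L^2(\Gamma,l+2)$ membership at regular points follows since $K_l:L^2(\Gamma,l)\to L^2(\Gamma,l+2)$ on the relevant domain, consistently with the general spectral setup recalled around (\ref{spectrum}).

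The main obstacle I anticipate is the careful bookkeeping needed to make the double induction airtight: one must check that every term appearing on the right-hand side of (\ref{rec}), (\ref{weightup}), (\ref{weightdown}) is genuinely of strictly smaller complexity in the chosen well-ordering (total weight, then $|\n|+|\m|$ after using the pull-out identities, then $|l|$), that the base cases are exactly the non-holomorphic Eisenstein series $E(z,w)$ (whose continuation to $\Re s>1/2$ with a pole at $s=1$ and no poles for $\Re s>1$ is classical), and — the genuinely analytic point — that applying the unbounded operators $K_l,L_l$ and the resolvent $R(s,l)$ to the meromorphically-continued $G$'s is legitimate, i.e. that $G_{\n,\m,l}(z,s)$ at regular points lies in the domain of these operators and that identities proved in the region of absolute convergence persist under analytic continuation. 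The last point is handled by noting that the identities are between holomorphic (in $s$) $L^2$-valued functions on the region $\Re s>1+|\n|+|\m|$ and both sides continue meromorphically, so they agree by uniqueness of analytic continuation; the domain issues are managed using the decay of $I_n$ at cusps established in the proof of Lemma \ref{regionofconv} together with standard elliptic regularity for $\Delta_l$.
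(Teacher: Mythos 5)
Your proposal is correct and follows essentially the same path as the paper: induction on the total weight, base case reduced via the pull-out identities to powers of $y^{k/2}f$ times a weight-shifted non-holomorphic Eisenstein series, the weight-zero case via the recursion (\ref{rec}) together with the resolvent $R(s+\alpha(\n,\m),0)$ and the inductive hypothesis, and the passage to general even $l$ via (\ref{weightup}), (\ref{weightdown}) and the symmetry (\ref{conj}). One small tightening for (ii): note that $\alpha(\n,\m)$ is a non-negative \emph{integer}, so the poles of $R(s+\alpha(\n,\m),0)$ in $\Re s>1/2$ are either located exactly at $\mathcal{P}$ (when $\alpha(\n,\m)=0$) or absent altogether (when $\alpha(\n,\m)\geq 1$); your observation that the shifted poles merely lie in $\Re s\leq 1$ would not by itself place them in $\mathcal{P}$.
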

\begin{proof}
We prove the claims by an induction on the total weight $\Sigma(\n,\m)$. If $\Sigma(\n,\m)=0$ then by (\ref{pullout1}) and (\ref{pullout}), we can write
\begin{align}\label{00} G_{\n,\m,l}(z,s)= (y^{k/2}f(z))^{|\n|}(y^{k/2}\overline{f(z)})^{|\m|}E_{l-k(|\n|-|\m|)}(z,s). \end{align}
Now $E_{l-k(|\n|-|\m|)}(z,s)$ is meromorphic in $\Re s>1/2$ with poles contained in $\mathcal{P}$ and is regular for $\Re s>1$ (see \cite[Chapter 4]{DuFrIw02}). Furthermore since $f(z)$ decays rapidly at all cusps, the above defines an element of $L^2(\Gamma, l)$ at all regular points. 

Now assume $\Sigma(\n,\m)>0$. By using (\ref{pullout1}) and (\ref{pullout}) we may assume that $\n,\m$ are positive multi-sets. Firstly we consider the case $l=0$. By (\ref{rec}), we can write 
\begin{align*}&G_{\n,\m,0}(z,s)\\
=&R(s+\alpha(\n,\m),0)\left(\text{\it linear combinations of $G_{\n',\m',l}(z,s)$'s with $\Sigma(\n',\m')<\Sigma(\n,\m)$} \right),\end{align*} 
where by the induction hypothesis, all terms inside the parenthesis satisfy the properties $(i),(ii),(iii)$. Since the resolvent operator $R(s+\alpha(\n,\m),0)$ is regular in the half-plane $\Re s>1$ and meromorphic in $\Re s>1/2$ with poles contained in $\mathcal{P}$, the wanted properties follow for $G_{\n,\m,0}(z,s)$ as well. Observe that, if $\alpha(\n,\m)\geq 1$, then the resolvent is actually regular for $\Re s>1/2$. 

Now to get the claim for all positive weights $l$, we do an induction on the weight. For $l\geq 0$, the identity (\ref{weightup}) gives
$$G_{\n,\m,l+2}(z,s)= \frac{K_{l} G_{\n,\m,l}(z,s)+\sum_{\nn\in\n}G_{\n_\nn,\m,l+2}(z,s)}{(s+\alpha(\n,\m)+l/2)}.  $$
We know by the induction hypothesis that all the Poincaré series on the right-hand side of the above satisfy $(i),(ii),(iii)$ of this proposition. So since $s+\alpha(\n,\m)+ l/2$ is non-zero for $\Re s>1/2$, we see that also $G_{\n,\m,l+2}(z,s)$ satisfies $(i),(ii),(iii)$.
 
A similar argument applies to negative weights using (\ref{weightdown}). This finishes the induction and thus the proof. 
\end{proof}

This allows us to extend the range of validity of Lemma \ref{induction} by uniqueness of analytic continuation.
\begin{cor}
The equations (\ref{weightup}), (\ref{weightdown}) and (\ref{rec}) are valid in the half-plane $\Re s>1/2$ as equalities of meromorphic functions.
\end{cor}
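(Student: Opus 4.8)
The plan is to upgrade the three identities of Lemma~\ref{induction} from the half-plane of absolute convergence $\Re s > 1 + |\n| + |\m|$ to the larger half-plane $\Re s > 1/2$, using the meromorphic continuation established in Proposition~\ref{analyticont}. The underlying principle is completely soft: two meromorphic functions that agree on an open subset of a connected domain agree everywhere on that domain. The only genuine content is making sure that all objects appearing on both sides of (\ref{weightup}), (\ref{weightdown}) and (\ref{rec}) are honestly meromorphic on $\Re s > 1/2$, so that the principle applies.

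First I would observe that by Proposition~\ref{analyticont} each of the Poincar\'e series $G_{\n,\m,l}(z,s)$, $G_{\n_\nn,\m,l\pm 2}(z,s)$, $G_{\n,\m_\mm,l\pm 2}(z,s)$, $G_{\n_\nn,\m_\mm,l}(z,s)$ occurring in Lemma~\ref{induction} admits meromorphic continuation to $\Re s > 1/2$, taking values in $L^2(\Gamma,\cdot)$ at regular points; here one uses that $\n_\nn$, $\m_\mm$ are again multi-sets with elements in $\{0,\ldots,k/2\}$ (after applying (\ref{pullout1}), (\ref{pullout}) to reduce to positive multi-sets if an index drops to $0$), so the hypothesis $|\n|+|\m|>0$ of that proposition is preserved in every term with the possible exception of a term of total weight that has become $0$, which is handled by (\ref{00}). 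For the weight-shift identities (\ref{weightup}) and (\ref{weightdown}), the operators $K_l$ and $L_l$ are differential operators acting holomorphically in the parameter $s$ (by the local-coordinate formulas of the lemma preceding Lemma~\ref{induction}), so $K_l G_{\n,\m,l}(z,s)$ and $L_l G_{\n,\m,l}(z,s)$ are meromorphic on $\Re s > 1/2$ as well; likewise the scalar factors $s + \alpha(\n,\m) \pm l/2$ are entire. For the resolvent identity (\ref{rec}), the factor $R(s + \alpha(\n,\m), l)$ is, by the discussion around (\ref{resexp}) and Lemma~\ref{resbnd}, a meromorphic operator-valued function on $\Re s + \alpha(\n,\m) > 1/2$, hence certainly on $\Re s > 1/2$ since $\alpha(\n,\m) \geq 0$; applied to the meromorphic $L^2$-valued function in the large parenthesis (a finite linear combination of the $G$'s just discussed, with polynomial-in-$s$ coefficients), it produces a meromorphic function on $\Re s > 1/2$.

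Having established that both sides of each of (\ref{weightup}), (\ref{weightdown}), (\ref{rec}) are meromorphic functions of $s$ on the connected open set $\Re s > 1/2$ (with values in the relevant Hilbert space, for fixed $z$, or equivalently as meromorphic functions of $z$ and $s$), and that they coincide on the nonempty open subset $\Re s > 1 + |\n| + |\m|$ by Lemma~\ref{induction}, I conclude by the identity theorem for meromorphic functions that they coincide throughout $\Re s > 1/2$. This is exactly the assertion of the corollary.

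I do not expect a serious obstacle here; the one point requiring a little care is bookkeeping about the index sets. When some $\nn = 0$ or $\mm = 0$ occurs in $\n_\nn$ or $\m_\mm$ the corresponding Poincar\'e series is not literally covered by the ``positive multi-set'' reduction, but (\ref{pullout1}) and (\ref{pullout}) express it as $y^{k/2} f(z)$ (or $y^{k/2}\overline{f(z)}$) times a Poincar\'e series of lower weight and strictly smaller total weight, whose meromorphic continuation to $\Re s > 1/2$ is again given by Proposition~\ref{analyticont} (or, in the base case of total weight $0$, by (\ref{00}) and the classical theory of the non-holomorphic Eisenstein series). Since multiplication by the rapidly decaying factor $y^{k/2} f(z)$ preserves meromorphy in $s$, every term is controlled, and the identity-theorem argument goes through without change.
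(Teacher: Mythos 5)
Your proposal is correct and follows the paper's own route: the paper also obtains the corollary purely by uniqueness of analytic continuation, invoking the meromorphic continuation of all the Poincar\'e series from Proposition \ref{analyticont} to extend the identities of Lemma \ref{induction} from the half-plane of absolute convergence to $\Re s>1/2$. Your extra bookkeeping about indices dropping to $0$ and the meromorphy of the resolvent and weight-shift operators is sound but not a departure from the paper's argument.
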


\subsection{Bounds on the pole order at $s=1$} 
Next step is to determine the pole order at $s=1$ of $G_{\n,\m,l}(z,s)$. In this section we will prove certain bounds on the pole order. We will proceed by induction relying on the formulas (\ref{weightup}), (\ref{weightdown}) and (\ref{rec}). We firstly need the following key lemma.

\begin{lemma}\label{key} 
Let $\n,\m$ be positive multi-sets and $l$ an even integer. Then we have for $l\geq 0$;
\begin{align}\label{keyeq}
 \langle G_{\n,\m,lk}(z,s), (y^{k/2}f(z))^{l} \rangle
= \frac{ \sum_{\nn\in\n} \langle  G_{\n_\nn,\m,lk}(z,s), (y^{k/2}f(z))^{l}\rangle}{s+\alpha(\n,\m)+ lk/2-1},
\end{align}
and for $-l\leq 0$;
\begin{align}\label{keyeq2}
 \langle G_{\n,\m,-lk}(z,s),(y^{k/2} \overline{f(z)})^{l} \rangle
= \frac{ \sum_{\mm\in\m} \langle  G_{\n,\m_\mm,-lk}(z,s), (y^{k/2}\overline{f(z)})^{l}\rangle}{s+\alpha(\n,\m)+ lk/2-1}.
\end{align}
\end{lemma}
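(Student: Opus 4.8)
The plan is to use the raising/lowering recursion from Lemma \ref{induction} together with the adjointness relation (\ref{adjlow}) and the fact (\ref{4.2.8}) that $y^{k/2}f(z)$ (and its $l$-th power) is killed by the lowering operator. Concretely, for the first identity, I would start from the weight-raising relation (\ref{weightup}) applied at weight $l' = (l-1)k$ with the multisets $\n,\m$ replaced so as to produce $G_{\n,\m,lk}$ on the left. That is, I would use
$$ K_{(l-1)k}\, G_{\n,\m,(l-1)k}(z,s) = \bigl(s+\alpha(\n,\m)+(l-1)k/2\bigr) G_{\n,\m,(l-1)k+2}(z,s) - \cdots, $$
but this raises the weight only by $2$, not by $k$. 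So instead I would use the "pull-out" identity (\ref{pullout1}): since $G_{\n\cup\{0\},\m,l}(z,s) = y^{k/2}f(z)\, G_{\n,\m,l-k}(z,s)$, iterating gives that multiplying by $(y^{k/2}f(z))^{l}$ shifts the weight index by $-lk$. The cleanest route is therefore to rewrite the inner product $\langle G_{\n,\m,lk}(z,s),(y^{k/2}f(z))^{l}\rangle$ using the automorphy weights so that $(y^{k/2}f(z))^l$ can be "absorbed" and then exploit that $L_k(y^{k/2}f)=0$.

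Here is the key step in more detail. Consider the weight-$lk$ inner product and apply the lowering-operator recursion (\ref{weightdown}) to $G_{\n,\m,lk}$. Actually the efficient trick is the reverse: take the adjoint relation
$$ \langle K_{lk-2}\, G_{\n,\m,lk-2}(z,s),\ (y^{k/2}f(z))^{l}\rangle = \langle G_{\n,\m,lk-2}(z,s),\ L_{lk}\bigl((y^{k/2}f(z))^{l}\bigr)\rangle. $$
By the product rule (\ref{prodlow}) for the lowering operator and (\ref{4.2.8}), the right-hand side vanishes: $L_{lk}\bigl((y^{k/2}f)^l\bigr) = \sum_{j} (y^{k/2}f)^{j}\,(L_k y^{k/2}f)\,(y^{k/2}f)^{l-1-j}\cdot(\text{weight bookkeeping}) = 0$. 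Then substitute the explicit form of $K_{lk-2}\,G_{\n,\m,lk-2}$ from (\ref{weightup}):
$$ K_{lk-2}\,G_{\n,\m,lk-2}(z,s) = \bigl(s+\alpha(\n,\m)+(lk-2)/2\bigr) G_{\n,\m,lk}(z,s) - \sum_{\nn\in\n} G_{\n_\nn,\m,lk}(z,s). $$
Wait — this needs a small correction: the weight on the right of (\ref{weightup}) with input weight $lk-2$ is $lk$, but $\alpha$ does not change, so we get $(s+\alpha(\n,\m)+(lk-2)/2)$ as the coefficient. Pairing with $(y^{k/2}f)^{l}$ and using that the whole left side is $0$ yields
$$ 0 = \bigl(s+\alpha(\n,\m)+lk/2-1\bigr)\langle G_{\n,\m,lk}(z,s),(y^{k/2}f)^{l}\rangle - \sum_{\nn\in\n}\langle G_{\n_\nn,\m,lk}(z,s),(y^{k/2}f)^{l}\rangle, $$
which rearranges to (\ref{keyeq}). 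The second identity (\ref{keyeq2}) follows by the conjugation symmetry (\ref{conj}): apply the first identity with $\n,\m$ and $l$ swapped appropriately and conjugate, using that $L_k y^{k/2}\overline{f}$ — more precisely $K_{-k}(y^{k/2}\overline{f})=0$ by conjugating (\ref{4.2.8}) — plays the symmetric role.

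The main obstacle I anticipate is making the integration-by-parts / adjointness step rigorous: the relation (\ref{adjlow}) is stated for $L^2$ functions, but $G_{\n,\m,\cdot}(z,s)$ is only in $L^2(\Gamma,l)$ in the region $\Re s > 1+|\n|+|\m|$ (Lemma \ref{regionofconv}), and near $s=1$ we are relying on meromorphic continuation (Proposition \ref{analyticont}). So I would first prove the identity (\ref{keyeq}) as an identity of meromorphic functions in the region of absolute convergence, where all the integrations by parts are justified by the rapid decay of $f$ at the cusps (which makes boundary terms vanish), and then extend to $\Re s>1/2$ by uniqueness of analytic continuation, exactly as in the Corollary following Proposition \ref{analyticont}. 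A secondary technical point is the weight bookkeeping in the product rule for $L_{lk}$ acting on $(y^{k/2}f)^l$ — one must track that each factor $y^{k/2}f$ carries weight $k$ so that the intermediate weights in the Leibniz expansion are the correct multiples of $k$, and that every term contains at least one factor $L_k(y^{k/2}f)=0$; this is routine but must be stated carefully.
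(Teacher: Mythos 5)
Your argument is correct and is essentially the paper's own proof: solve (\ref{weightup}) at weight $lk-2$ for $G_{\n,\m,lk}$, pair with $(y^{k/2}f)^{l}$, and kill the $K_{lk-2}$ term via the adjointness (\ref{adjlow}) together with $L_{lk}\bigl((y^{k/2}f)^{l}\bigr)=0$ from the product rule (\ref{prodlow}) and (\ref{4.2.8}). Handling (\ref{keyeq2}) by the conjugation symmetry (\ref{conj}) rather than by redoing the computation with the lowering operator is an equivalent, equally valid shortcut, and your remark about first establishing the identity in the region of absolute convergence and then extending by analytic continuation matches how the paper justifies this step (via the corollary extending Lemma \ref{induction} to $\Re s>1/2$).
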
 
\begin{proof}
Assume $l\geq 0$ then by the identity (\ref{weightup}), we have
\begin{align*}
& \langle G_{\n,\m,lk}(z,s), (y^{k/2}f(z))^{l} \rangle\\
 = &\frac{\langle K_{kl-2}G_{\n,\m,lk-2}(z,s)+ \sum_{\nn\in\n} G_{\n_\nn,\m,lk}(z,s), (y^{k/2}f(z))^{l} \rangle}{s+\alpha(\n,\m)+lk/2-1}.
\end{align*}
By the adjointness properties of the raising and lowering operators (\ref{adjlow}), we get 
$$ \langle K_{kl-2}G_{\n,\m,lk-2}(z,s), (y^{k/2}f(z))^{l} \rangle=\langle G_{\n,\m,lk-2}(z,s), L_{lk}(y^{k/2}f(z))^{l} \rangle=0, $$
using (\ref{4.2.8}). This yields the desired formula. The case $-l\leq 0$ is proved similarly using (\ref{weightup}).  
\end{proof}

From this we conclude the following key result. 
\begin{prop} \label{polebound}
The pole order of $G_{\n,\m,l}(z,s)$ at $s=1$ is bounded by 
$$  \min (\#\{ \nn\in \n\mid \nn=k/2 \},\#\{ \mm\in \m\mid \mm=k/2 \})+1. $$
\end{prop}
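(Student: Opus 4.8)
The plan is to prove Proposition \ref{polebound} by induction on the total weight $\Sigma(\n,\m)$, using the recursion formula (\ref{rec}) together with the key identities from Lemma \ref{key}. Write $p(\n,\m,l)$ for the pole order of $G_{\n,\m,l}(z,s)$ at $s=1$, and let $N(\n):=\#\{\nn\in\n\mid \nn=k/2\}$, $N(\m):=\#\{\mm\in\m\mid \mm=k/2\}$; I want to show $p(\n,\m,l)\le \min(N(\n),N(\m))+1$. The base case $\Sigma(\n,\m)=0$ follows from (\ref{00}): then $G_{\n,\m,l}$ is $(y^{k/2}f(z))^{|\n|}(y^{k/2}\overline{f(z)})^{|\m|}E_{l-k(|\n|-|\m|)}(z,s)$, and the non-holomorphic Eisenstein series $E_w(z,s)$ has at most a simple pole at $s=1$, while $N(\n)=N(\m)=0$ in this case (all elements are $0$, not $k/2$), giving pole order $\le 1 = 0+1$. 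For the inductive step I first reduce to positive multi-sets via (\ref{pullout1}), (\ref{pullout}), noting these identities multiply by a factor $y^{k/2}f$ or $y^{k/2}\overline f$ holomorphic in $s$ (hence do not change pole orders) and preserve the relevant counts once one checks that replacing a $0$ by nothing and shifting $l$ by $k$ does not affect $\min(N(\n),N(\m))$.

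The heart of the argument is the case where $\n,\m$ are positive. Here the naive estimate from (\ref{rec}): $p(\n,\m,l)\le \max_{\text{terms}} p(\text{term}) + (\text{pole order of }R(s+\alpha,l)\text{ at }s=1)$, combined with the fact that every term on the right of (\ref{rec}) has strictly smaller total weight (so the induction hypothesis applies), would give a bound that is too weak by itself: the resolvent $R(s+\alpha(\n,\m),l)$ is regular at $s=1$ when $\alpha(\n,\m)\ge 1$ but may have a pole when $\alpha(\n,\m)=0$, i.e. when all elements of $\n\cup\m$ equal $k/2$ — and that is exactly the extremal case. So the key point is: when $\alpha(\n,\m)=0$, every $\nn\in\n$ equals $k/2$ and every $\mm\in\m$ equals $k/2$, so $l$ should be a multiple of $k$ and $G_{\n,\m,l}$ with $l = (|\n|-|\m|)k$ is (up to the holomorphic pull-out factors) essentially $G_{\emptyset,\emptyset,\ast}$-type only after removing the $k/2$'s; but more usefully, in this regime Lemma \ref{key} applies with $lk$ replaced appropriately. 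I would use (\ref{keyeq})–(\ref{keyeq2}) to control the projection of $G_{\n,\m,l}$ onto the eigenspace of $\Delta_l$ at eigenvalue $\lambda(1)$ — which is exactly the residue at the potential simple pole coming from $R$ — and show that this projection, expressed via Lemma \ref{key} as a sum of pairings of lower-total-weight Poincaré series divided by $s+\alpha(\n,\m)+lk/2-1$, has pole order at $s=1$ controlled by the inductive bound minus one for each $k/2$ we strip off.

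Concretely, I expect the cleanest route is: decompose $G_{\n,\m,l}(z,s) = c(s)\,\psi + G^\perp(z,s)$ where $\psi$ spans the relevant (one-dimensional, spanned by a power of $y^{k/2}f$ or its conjugate) residual eigenspace of $\Delta_l$ at $\lambda(1)$ and $G^\perp$ is the projection to the orthogonal complement. The function $c(s)$ is given by the pairing $\langle G_{\n,\m,l},\psi\rangle/\|\psi\|^2$, and Lemma \ref{key} gives a recursion for $c(s)$: $c(s) = \big(\text{lower-weight pairings}\big)/(s+\alpha(\n,\m)+lk/2-1)$, where the denominator vanishes precisely at $s=1$ when $\alpha(\n,\m)+lk/2=0$, forcing $\alpha(\n,\m)=0$ and $l=0$ — or, after the pull-out normalization, $l=(|\n|-|\m|)k$ so that $lk/2$ should be read as the corresponding shift, and one checks the arithmetic makes the pole simple in $c$ relative to the numerator. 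Meanwhile $G^\perp$ satisfies the same recursion (\ref{rec}) but with the resolvent replaced by its regular part $R_{\text{reg}}$, so its pole order at $s=1$ is bounded directly by the induction hypothesis applied to the lower-weight terms, with no extra $+1$. Adding up, $p(\n,\m,l)\le \max(\,\text{pole order of }c,\ \text{pole order of }G^\perp\,)$, and one tracks that peeling off one copy of $k/2$ from $\n$ (via the sum $\sum_{\nn\in\n}$ in (\ref{keyeq}), which replaces some $k/2$ by $k/2-1$) decreases $N(\n)$ by one while the denominator contributes the single extra pole, yielding the claimed bound $\min(N(\n),N(\m))+1$.

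The main obstacle I anticipate is the bookkeeping in the $\alpha(\n,\m)=0$ extremal case: making precise that the residual eigenspace at $\lambda(1)$ in weight $l$ is exactly spanned by the appropriate power of $y^{k/2}f(z)$ (or $y^{k/2}\overline{f(z)})$, that Lemma \ref{key} genuinely captures the full residue of $G_{\n,\m,l}$ at $s=1$ and not just its projection onto a proper subspace, and that when we split into $c(s)\psi + G^\perp$ the piece $G^\perp$ really does inherit a strictly better bound (no $+1$) rather than just the same bound. One has to be careful that $\psi=(y^{k/2}f)^{|\n|}\cdots$ has nonzero norm (it does, since $f\ne 0$), and that the pull-out identities (\ref{pullout1})–(\ref{pullout}) interact correctly with the projection. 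If the clean eigenspace description is awkward to pin down, the fallback is a cruder argument: apply (\ref{rec}) iteratively until $\alpha$ of every intermediate term is $\ge 1$ (which happens once no $k/2$ remains among the stripped indices), at which point all resolvents involved are regular at $s=1$ and the pole order is pinned entirely by the $\min(\#\{k/2\},\#\{k/2\})$ "bad" resolvent applications encountered along the way plus the single simple Eisenstein pole at the bottom of the recursion.
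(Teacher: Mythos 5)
Your outline correctly reproduces the skeleton of the argument --- induction on the total weight $\Sigma(\n,\m)$, reduction to positive multi-sets via (\ref{pullout1})--(\ref{pullout}), the symmetry (\ref{conj}), and the recognition that $\alpha(\n,\m)=0$ is the extremal case where the resolvent in (\ref{rec}) develops a pole at $s=1$. But two of the concrete steps you sketch for that extremal case are wrong, and they are precisely where the $\min$ in the bound comes from. First, the residual eigenspace controlling the pole of $R(\cdot,l)$ at $s=1$ is \emph{not} spanned by a positive power of $y^{k/2}f$: the function $(y^{k/2}f)^{l'}$ is a $\lambda(kl'/2)$-eigenfunction of $\Delta_{kl'}$, and $\lambda(kl'/2)=\lambda(1)$ only when $l'=0$. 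The projector $P_{\lambda(1),0}$ in weight $0$ is onto the constant function $1$, so the pairing you need is $\langle G_{\n,\m,0}(z,s),1\rangle$ --- the $l=0$ specialization of Lemma \ref{key}, where indeed $(y^{k/2}f)^0=1$.

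Second, and decisively, you propose to run Lemma \ref{key} by peeling from $\n$ via the $\sum_{\nn\in\n}$ identity (\ref{keyeq}); that yields a bound too weak by one. After normalizing $|\n|\ge|\m|$ with $\alpha=0$ (so all entries equal $k/2$), write $n=|\n|$, $m=|\m|$. Peeling from $\n$ gives $\langle G_{\n,\m,0},1\rangle = n\,\langle G_{\n_{k/2},\m,0},1\rangle/(s-1)$, and the outer induction hypothesis applied to $G_{\n_{k/2},\m,0}$ (whose $k/2$-counts are $n-1$ and $m$) only gives pole order $\le\min(n-1,m)+1$, which equals $m+1$ whenever $n>m$, so you end up with $\le m+2$. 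You must instead peel from the \emph{smaller} multi-set $\m$ via (\ref{keyeq2}): then $\langle G_{\n,\m,0},1\rangle = m\,\langle G_{\n,\m_{k/2},0},1\rangle/(s-1)$, the outer induction gives pole order $\le\min(n,m-1)+1=m$ for the numerator, and the $m=0$ floor is $\langle G_{\n,\emptyset,0},1\rangle=0$ from the empty sum. Your fallback of counting ``bad'' resolvent applications fails for the same reason: the right-hand side of (\ref{rec}) already has pole order $\le m+1$ by induction, and the single $\alpha=0$ resolvent would naively add one more, giving $m+2$. The missing step --- which you flag yourself in your last paragraph --- is to show that a pole of order $>m+1$ is impossible because the leading Laurent coefficient would then have to be constant (it would come from the residue $P_{\lambda(1),0}/(s-1)$ in (\ref{resexp})), forcing $\langle G_{\n,\m,0},1\rangle$ to share that higher-order pole and contradicting the bound just established. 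This contradiction, not a ``strictly better bound on $G^\perp$'', is what closes the argument; the paper states it in exactly this form, and the general weight $l$ is then handled separately by (\ref{weightup}) and (\ref{weightdown}).
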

\begin{proof} We will do an induction on the total weight $\Sigma(\n,\m)$. If $\Sigma(\n,\m)=0$ then the result is clear by the properties of the non-holomorphic Eisenstein series. In general by applying modularity (as in (\ref{pullout1}) and (\ref{pullout})), we may assume that both $\n$ and $\m$ are positive. By the symmetry (\ref{conj}) we may also assume that $|\n|\geq |\m|$.

We proceed by induction on the total weight; assume that the total weight is positive; $\Sigma(\n,\m)>0$ and that we have proved the claim for all smaller $\Sigma(\n,\m)$-values. We begin with the case $l=0$. The recursion formula (\ref{rec}) gives the following;
$$ G_{\n,\m,0}(z,s)= R(s+\alpha(\n,\m),0)\left(-\sum_{\nn\in \n, \mm\in \m} G_{\n_\nn, \m_\mm,l}(z,s)+\cdots \right),  $$
where the terms inside the parenthesis satisfy the claim of the proposition by the induction hypothesis. If $\alpha(\n,\m)>0$ then the claim also follows for $G_{\n,\m,0}(z,s)$, since the resolvent operator $R(s+\alpha(\n,\m),0)$ is regular at $s=1$.

If $\alpha(\n,\m)=0$, then we must have
$$  \n=\underbrace{\{k/2,\ldots, k/2\}}_{n},\quad  \m=\underbrace{\{k/2,\ldots, k/2\}}_{m}$$
for some $n\geq m\geq 0$. 

Now we claim that $\langle G_{\n,\m,0}(z,s),1 \rangle$ has a pole of order at most $m+1$. 

To see this we do an induction on $m$. If $m=0$, then by Lemma \ref{key}, we see directly that
$$\langle G_{\n,\m,0}(z,s),1 \rangle=0.$$
If $m>0$ then we get by Lemma \ref{key}
$$ \langle G_{\n,\m,0}(z,s), 1 \rangle = \frac{ m\langle  G_{\n,\m_{k/2},0}(z,s), 1\rangle}{s-1}$$
and by the induction hypothesis, $G_{\n,\m_{k/2},0}(z,s)$ has a pole of order at most $m$, which proves the claim.

We observe that if $G_{\n,\m,0}(z,s)$ has a pole of order greater than $m+1$, then by (\ref{rec}) and the induction hypothesis there has to be an increase in the pole order coming from the pole in the singular expansion of the resolvent (\ref{resexp}). This implies that the leading Laurent coefficient is constant. But we just showed that $\langle G_{\n,\m,0}(z,s), 1 \rangle$ has a pole of order at most $m+1$. This finishes the induction in the case $l=0$.

By using (\ref{weightup}) and (\ref{weightdown}) as in the proof of Proposition \ref{analyticont}, we get by induction the pole bound for all even weights $l$ as well. This finishes the induction and hence the proof.     
\end{proof}

\subsection{Finding the leading pole} 
For $m\neq n$, Proposition \ref{polebound} yields the desired bound needed to prove Theorem \ref{mainthm} (see (\ref{bound}) below). Next step is to determine the exact pole order and leading Laurent coefficient of $G_{\n,\n,0}(z,s)$ at $s=1$ when
$$ \n=\{ \underbrace{k/2,\ldots,k/2 }_{n}\}. $$
By Proposition \ref{polebound} the pole order is bounded by $n+1$ and we will see that this bound is sharp.

\begin{thm} \label{main}
Let 
$$ \n= \{\underbrace{k/2, \ldots, k/2}_n\}. $$
Then $G_{\n,\n,0}(z,s)$ has a pole of order $n+1$ at $s=1$ with leading Laurent coefficient
$$\frac{(n!)^2|\!|f|\!|^{2n}}{((k-1)!)^n \vol(\Gamma)^{n+1}}. $$ 
\end{thm}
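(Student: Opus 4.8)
The plan is to run the induction on $n$ using the recursion formula (\ref{rec}) with $l=0$, $\n=\m=\{k/2,\ldots,k/2\}$ (so $\alpha(\n,\m)=0$), together with the singular expansion (\ref{resexp}) of the resolvent and the key identity of Lemma \ref{key}. Write $G_n(z,s):=G_{\n,\n,0}(z,s)$ with $|\n|=n$. The base case $n=0$ is just the non-holomorphic Eisenstein series $E(z,s)$, whose pole at $s=1$ is simple with residue $1/\vol(\Gamma)$ (the constant function, normalized). For the inductive step, I would feed the right-hand side of (\ref{rec}) — which by Proposition \ref{analyticont} and Proposition \ref{polebound} has a pole of order at most $n$ at $s=1$ in each summand, and lives in $L^2(\Gamma,0)$ — through $R(s,0)$. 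Near $s=1$ the resolvent contributes $\frac{P_{0,0}}{s-1}$ plus a regular part, where $P_{0,0}$ is the projection onto the constants, $P_{0,0}h=\frac{\langle h,1\rangle}{\vol(\Gamma)}\cdot 1$. So the top-order ($(s-1)^{-(n+1)}$) Laurent coefficient of $G_n$ equals $\frac{1}{\vol(\Gamma)}$ times the $(s-1)^{-n}$-coefficient of $\langle (\text{RHS of }(\ref{rec})),1\rangle$.

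The two obstacles are: (a) computing the inner product $\langle \cdot,1\rangle$ of the bracketed expression in (\ref{rec}), and (b) showing the pole order is exactly $n+1$ (not smaller). For (a), the bracket is $-\sum_{\nn,\mm}G_{\n_\nn,\m_\mm,0} + \frac{s}{?}\cdots$; wait — with $\alpha=0$, $l=0$ the coefficients are $s$ in front of both single-lowered sums, and those terms pair against $1$ via Lemma \ref{key}. Indeed Lemma \ref{key} (with $l=0$, the weight-$0$ case, reading off (\ref{keyeq})/(\ref{keyeq2})) gives $\langle G_{\n_\nn,\n,0},1\rangle = \frac{\sum G_{(\n_\nn)_{\nn'},\n,0}}{s-1}$-type identities, and more usefully $\langle G_{\n',\m',0},1\rangle=0$ whenever $\n'$ or $\m'$ still has an element equal to... hmm. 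The cleanest route: since $\n_\nn = \{k/2,\ldots,k/2,k/2-1\}$ has one element strictly below $k/2$, so $\alpha(\n_\nn,\m)=1>0$, hence by the observation in Proposition \ref{analyticont}/\ref{polebound} the resolvent $R(s+1,0)$ occurring in the recursion for $G_{\n_\nn,\m,0}$ is regular at $s=1$, and iterating Lemma \ref{key} collapses $\langle G_{\n_\nn,\m,0},1\rangle$ to a rational multiple of $\langle G_{\n''_,\m'',0},1\rangle$ with strictly smaller $|\n|$. Carefully bookkeeping the factorials from the denominators $s+\alpha-1$ in Lemma \ref{key} and from the coefficient $s$ in (\ref{rec}) (evaluated at $s=1$ to leading order) should produce the recursion $c_n = \frac{n^2}{\vol(\Gamma)}\cdot\frac{\|f\|^2}{(k-1)!}\, c_{n-1}$ for the leading coefficient $c_n$, which solves to $\frac{(n!)^2\|f\|^{2n}}{((k-1)!)^n\vol(\Gamma)^{n+1}}$, matching the claim. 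Here the factor $\|f\|^2/(k-1)!$ must come out of an inner product like $\langle G_{\{k/2\},\{k/2\},0}(z,1^+),1\rangle$ or, equivalently, $\langle y^{k/2}f\cdot\overline{y^{k/2}f}\cdot E(z,s),1\rangle$ residue computation — I'd compute $\int_{\Gamma\backslash\H} y^k|f|^2\,d\mu$ against the residue $1/\vol(\Gamma)$ of $E(z,s)$, and relate $\int y^k|f(z)|^2 d\mu(z)$ to the Petersson norm $\|f\|^2$ and the factor $(k-1)!$ via the standard unfolding/$\Gamma$-function identity $\int_0^\infty e^{-4\pi y}y^{k}\frac{dy}{y}=\frac{(k-1)!}{(4\pi)^k}$ — actually the relevant normalization is already baked into the definition of $I_{k/2}$ and the $(-2i)^{k/2}$ denominators, so I expect the bare $(k-1)!$ to emerge.

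For (b), exactness of the pole order: this is the genuinely delicate point, since a priori cancellation could drop the order. The argument in Proposition \ref{polebound} already shows that any pole of order $>m+1$ forces the leading coefficient to be constant; here I need the converse flavor — that the order is not $<n+1$. I would argue inductively that the top Laurent coefficient computed in (a) is a positive real number (a product of norms and volumes, manifestly nonzero), hence nonvanishing, so the pole order is exactly $n+1$; the positivity is visible because at each stage the relevant inner product is $\langle |{\rm cusp\ form}|^2\cdot(\text{residue of Eisenstein})\rangle = \frac{1}{\vol\Gamma}\int y^k|f|^2 > 0$. The main obstacle overall is the careful combinatorial/analytic bookkeeping in step (a): tracking exactly which terms of the recursion (\ref{rec}) survive the pairing with $1$, keeping the $s$-dependent coefficients and the $(s+\alpha-1)$ denominators straight, and verifying that the surviving contribution is precisely $-\sum_{\nn\in\n,\mm\in\m}G_{\n_\nn,\m_\mm,0}$ together with the Lemma \ref{key} collapse of the mixed terms — and confirming these combine to give the clean recursion $c_n = \frac{n^2\|f\|^2}{(k-1)!\,\vol(\Gamma)}c_{n-1}$ with no stray constants.
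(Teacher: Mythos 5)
Your overall strategy is the same as the paper's: induction on $n$ via the recursion formula (\ref{rec}), the observation that an increase of pole order can only come from the singular part of the resolvent (so the top Laurent coefficient is constant and is detected by pairing against $1$), repeated use of Lemma \ref{key} plus the modularity identities (\ref{pullout1})--(\ref{pullout}) to collapse the multisets, the induction hypothesis to produce $||f||^{2n-2}$, and positivity of the resulting constant to get exactness of the pole order. The claimed recursion $c_n=\frac{n^2||f||^2}{(k-1)!\,\vol(\Gamma)}c_{n-1}$ is indeed the one the paper's proof establishes. However, the decisive computation is only asserted ("bookkeeping should produce\dots"), and as written it has two concrete soft spots. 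First, you are visibly unsure where the $(k-1)!$ comes from, and the candidate you float --- an unfolding identity $\int_0^\infty e^{-4\pi y}y^{k}\frac{dy}{y}=\frac{(k-1)!}{(4\pi)^k}$ --- is a red herring: no such unfolding occurs, and it would drag in a spurious $(4\pi)^{-k}$. In the paper the $(k-1)!$ is exactly the product of the accumulated denominators from Lemma \ref{key}: lowering one element of the second multiset from $k/2$ to $0$ against $1$, and then (after the pullout step $\langle G_{\n,\n'\cup\{0\},0},1\rangle=\langle G_{\n,\n',k},y^{k/2}f\rangle$) one element of the first multiset from $k/2$ to $0$ against $y^{k/2}f$, produces the denominators $(s-1)s(s+1)\cdots(s+k-2)$, whose non-singular factors give $1\cdot 2\cdots(k-1)=(k-1)!$ at $s=1$, while the two factors of $n$ come from the two applications of Lemma \ref{key} in which the full multiset $\{k/2,\dots,k/2\}$ is summed over.

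Second, your specific variant --- extracting the $(s-1)^{-n}$ coefficient of $\langle\text{RHS of }(\ref{rec}),1\rangle$ and multiplying by the residue of the resolvent --- is workable but strictly more delicate than you indicate: \emph{all three} terms $-n^2G_{\n_{k/2},\n_{k/2},0}$, $ns\,G_{\n_{k/2},\n,0}$ and $ns\,G_{\n,\n_{k/2},0}$ contribute at the top order $(s-1)^{-n}$ after pairing with $1$ (the two mixed terms each give $n\cdot n$, the doubly lowered term gives $-n^2$, and only the combination $n^2+n^2-n^2=n^2$ yields the correct constant), so the hoped-for "clean recursion with no stray constants" does not come from a single surviving term, and your half-formulated vanishing claim ("$\langle G_{\n',\m',0},1\rangle=0$ whenever\dots") is not correct as stated --- the vanishing from Lemma \ref{key} occurs when the second multiset is empty. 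The paper sidesteps this bookkeeping entirely by applying Lemma \ref{key} (with $\alpha(\n,\n)=0$, hence denominator $s-1$) directly to $\langle G_{\n,\n,0}(z,s),1\rangle$, which immediately yields the single term $\frac{n}{s-1}\langle G_{\n,\n_{k/2},0}(z,s),1\rangle$ and never requires pairing the right-hand side of (\ref{rec}) against $1$; the recursion formula is used only to argue that the order-$(n+1)$ part of $G_{\n,\n,0}$ is constant. So: right skeleton and right answer, but the central Laurent computation --- which is the actual content of the theorem --- is missing, and the route you sketch needs the three-term cancellation made explicit before it closes.
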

\begin{proof}
We do an induction on $n$. For $n=0$ the claim follows by the analytic properties of the non-holomorphic Eisenstein series \cite[(6.33)]{Iw}.

Now assume $n\geq 1$. First of all we see by (\ref{rec}) that 
\begin{align*} &G_{\n,\n,0}(z,s)\\
&= R(s,0) \left( -n^2 G_{\n_{k/2},\n_{k/2},0}(z,s)+nsG_{\n_{k/2},\n,0}(z,s)+nsG_{\n,\n_{k/2},0}(z,s)\right)  \end{align*}
By the bounds on the pole order from Proposition \ref{polebound}, all the terms inside the parentheses above have a pole of order at most $n$. This shows that if $G_{\n,\n,0}(z,s)$ has a pole of order $n+1$, then the leading pole is contained in the image under the projection onto the constant subspace, since (as above) the increase in the pole order has to come from the resolvent.
 
We will show that indeed 
$$\langle G_{\n,\n,0}(z,s),1\rangle/\langle 1,1\rangle, $$
has a pole of order $n+1$ at $s=1$ with the claimed leading Laurent coefficient.

Applying Lemma \ref{key} twice and using the pole bound from Proposition \ref{polebound}, we get; 
\begin{align*}
\langle &G_{\n,\n,0}(z,s),1\rangle\\
&= \frac{\langle nG_{\n,\n_{k/2},0}(z,s),1\rangle}{s-1}\\
&= \frac{\langle n\sum_{\nn \in\n_{k/2}}G_{\n,(\n_{k/2})_\nn,0}(z,s),1\rangle}{(s-1)s}\\
&= \frac{n\langle G_{\n,(\n_{k/2})_{k/2-1},0}(z,s),1\rangle+(\text{\it pole of order at most $n-1$ at $s=1$})}{(s-1)s}, \end{align*}
where 
$$(\n_{k/2})_{k/2-1}=\{ \underbrace{k/2,\ldots,k/2 }_{n-1}, k/2-2\}.$$ 
By repeated applications of Lemma \ref{key} (and Proposition \ref{polebound}), we arrive at
$$\langle G_{\n,\n,0}(z,s),1\rangle= \frac{n\langle G_{\n,\n'\cup\{0\},0}(z,s),1\rangle+(\text{\it pole of order at most $n-1$ at $s=1$})}{(s-1)s\cdots (s+k/2-2)}  $$
where
$$\n'= \{\underbrace{k/2, \ldots, k/2}_{n-1}\}.  $$
Now by applying modularity as in (\ref{pullout}), we get
$$  \langle G_{\n,\n'\cup\{0\},0}(z,s),1\rangle=\langle G_{\n,\n',k}(z,s),y^{k/2}f(z)\rangle.  $$
By a similar repeated application of Lemma \ref{key} (now with $l=k$), we arrive at
\begin{align*}  
&\langle G_{\n,\n,0}(z,s),1\rangle \\
= &\frac{n^2\langle G_{\n',\n',k}(z,s),y^k |f(z)|^2\rangle+(\text{\it pole of order at most $n-1$ at $s=1$})}{(s-1)s\cdots (s+k/2-2)\cdot (s+k/2-1)\cdots (s+k-2)}.
 \end{align*}  
By the induction hypothesis, we know that $G_{\n',\n',0}(z,s)$ has a pole of order $n$ at $s=1$ with leading Laurent coefficient given by 
$$ \frac{((n-1)!)^2|\!|f|\!|^{2n-2}}{((k-1)!)^{n-1} \vol(\Gamma)^{n}}. $$
Thus we see that 
\begin{align*}  \langle G_{\n,\n,0}(z,s),1\rangle/\langle 1,1 \rangle = &\frac{n^2\left\langle \frac{((n-1)!)^2|\!|f|\!|^{2n-2}}{((k-1)!)^{n-1} \vol(\Gamma)^{n}}, y^k |f(z)|^2  \right\rangle }{(k-1)!(s-1)^{n+1}\vol(\Gamma)}\\
 &\qquad\qquad\qquad\qquad+ (\text{\it pole of order at most $n$ at $s=1$}), \end{align*}
which yields the wanted.  \end{proof}

With this theorem established we can improve Proposition \ref{polebound} in the following special case. 

\begin{cor}\label{weightnot0}
Let
$$ \n= \{\underbrace{k/2, \ldots, k/2}_{n}\}$$
and $l\neq 0$ a non-zero even integer. Then the order of the pole of $G_{\n,\n,l}(z,s)$  at $s=1$ is at most $n$.
\end{cor}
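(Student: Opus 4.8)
The plan is to leverage Theorem \ref{main} together with the raising/lowering recursion formulas (\ref{weightup}) and (\ref{weightdown}) and the key Lemma \ref{key}. The point is that Proposition \ref{polebound} already gives the bound $n+1$ for every weight $l$, and Theorem \ref{main} shows that the full order $n+1$ is achieved at weight $l=0$; one expects that "moving away" from weight $0$ via a single application of $K_0$ or $L_0$ strictly drops the order. More precisely, I would first observe that by the symmetry (\ref{conj}), $\overline{G_{\n,\n,l}(z,s)}=G_{\n,\n,-l}(z,\overline s)$, so it suffices to treat $l>0$, and then by the induction on the weight used in Proposition \ref{analyticont} it suffices to treat $l=2$ (since passing from weight $l$ to $l+2$ via (\ref{weightup}) cannot increase the pole order, the analytic factor $(s+\alpha(\n,\n)+l/2)$ being regular and nonzero at $s=1$, and the other terms $G_{\n_\nn,\n,l+2}$ have strictly smaller total weight, hence pole order at most $n$ by Proposition \ref{polebound}).

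So the crux is: show $G_{\n,\n,2}(z,s)$ has pole order at most $n$ at $s=1$. Here $\alpha(\n,\n)=0$ since every element of $\n$ equals $k/2$. Apply (\ref{weightup}) with $l=0$:
\begin{align*}
K_0 G_{\n,\n,0}(z,s)= (s+n)\,G_{\n,\n,2}(z,s)-\sum_{\nn\in\n}G_{\n_\nn,\n,2}(z,s).
\end{align*}
The sum $\sum_{\nn\in\n}G_{\n_\nn,\n,2}(z,s)$ has terms of total weight strictly less than $\Sigma(\n,\n)$, hence each has pole order at most $n$ at $s=1$ by Proposition \ref{polebound}; and $(s+n)^{-1}$ is regular at $s=1$. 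Therefore it suffices to prove that $K_0 G_{\n,\n,0}(z,s)$ has pole order at most $n$ at $s=1$. By Theorem \ref{main}, the pole of order $n+1$ of $G_{\n,\n,0}(z,s)$ has leading Laurent coefficient a \emph{constant} function (the displayed number times $1$), i.e. $G_{\n,\n,0}(z,s)=\frac{c_n}{(s-1)^{n+1}}+(\text{pole of order at most }n)$ with $c_n$ constant in $z$. Since $K_0$ annihilates constants — indeed $K_0(1)=(z-\bar z)\partial_z 1+0=0$ — applying $K_0$ kills the top-order term, leaving pole order at most $n$. This requires knowing that $K_0$ commutes with taking Laurent coefficients in $s$, which holds because the convergence in Lemma \ref{regionofconv} and the meromorphic continuation in Proposition \ref{analyticont} are locally uniform in $z$, so $K_0$ (a first-order differential operator) acts coefficientwise on the Laurent expansion.

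The main obstacle I anticipate is the bookkeeping needed to make "applying $K_0$ to the Laurent expansion kills the leading term" fully rigorous: one must argue that $G_{\n,\n,0}(z,s)$, as a function of $s$ near $s=1$ with values in a space of functions of $z$, has a Laurent expansion whose coefficients are individually smooth in $z$ and on which $K_0$ acts termwise — this follows from Proposition \ref{analyticont}(i) (the Poincar\'e series is in $L^2(\Gamma,l)$, in fact smooth, at regular points) combined with elliptic regularity for $\Delta_0$ and the local uniformity of the continuation, but it should be stated carefully. A secondary subtlety is the induction on the weight $l$: strictly speaking one shows the bound $n$ for $l=2$ and then propagates to all even $l\geq 2$ using (\ref{weightup}) again, each step being harmless because the analytic prefactor is nonvanishing at $s=1$ and the correction terms have smaller total weight (hence pole order $\leq n$), and the case $l<0$ follows by conjugation symmetry (\ref{conj}). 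Once the termwise action of $K_0$ is granted, the rest is routine.
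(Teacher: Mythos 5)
Your proposal is correct and follows essentially the same route as the paper's own proof: reduce to $l>0$ via the symmetry (\ref{conj}), induct on the weight using the raising identity, and at the base case $l=2$ use that Theorem \ref{main} makes the leading Laurent coefficient of $G_{\n,\n,0}(z,s)$ constant (hence annihilated by $K_0$) while the correction term $G_{\n_{k/2},\n,2}(z,s)$ has pole order at most $n$ by Proposition \ref{polebound}. Only a cosmetic slip: with $l=0$ and $\alpha(\n,\n)=0$ the prefactor in (\ref{weightup}) is $s$, not $s+n$, but this does not affect the argument since either factor is regular and nonvanishing at $s=1$.
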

\begin{proof} By the symmetry (\ref{conj}), it is enough to prove it for $l>0$. We prove it by induction on $l$. For $l=2$ we get by (\ref{weightdown})
$$G_{\n,\n,2}(z,s) =\frac{K_0G_{\n,\n}(z,s)+nG_{\n_{k/2},\n,2}(z,s)}{s}. $$
From Theorem \ref{main} we know that the leading Laurent coefficient of $G_{\n,\n,0}(z,s)$ is constant, and thus it is annihilated by $K_0$. Furthermore we know by Proposition \ref{polebound} that $G_{\n_{k/2},\n,2}(z,s)$ has a pole of order at most $n$ at $s=1$. Thus we conclude that also $G_{\n,\n,2}(z,s)$ has a pole of order at most $n$ at $s=1$.

Now assume $l>2$. We get again by (\ref{weightdown}) the following;
$$G_{\n,\n,l+2}(z,s)=\frac{K_lG_{\n,\n,l}(z,s)+nG_{\n_{k/2},\n,l+2}(z,s)}{s+l/2}.$$
Thus by the induction assumption and Proposition \ref{polebound}, we see that also $G_{\n,\n,l+2}(z,s)$ has a pole of order at most $n$ at $s=1$. This finishes the induction and hence the proof. 
\end{proof}

\subsection{Growth on vertical lines} 
In this section we will prove bounds on the $L^2$-norm of $G_{\n,\m,l}(z,s)$ with $s$ in a horizontal strip, bounded away from the singular set $\mathcal{P}$. This we will use to get bounds on vertical lines for the main generating series $D^{m,n}(f,s)$ defined in (\ref{lfunc}), which is needed in order to apply a contour integration argument.
 
We will firstly consider the case of total weight zero; $\Sigma(\n,\m)=0$. We will use the idea used in the proof of \cite[Lemma 3.1]{PeRi2}. Following Petridis and Risager, we will in the proof assume that $\Gamma$ has only one cusp for simplicity. The same argument applies in the general case.
\begin{lemma}
Let $\eps>0$ and $s=\sigma+it$ satisfying $1/2+\eps \leq \sigma \leq 3/2$ and $\dist(s, \mathcal{P})\geq \eps$. Let $\n,\m$ be multi-sets such that $|\n|+|\m|>0$ and $\Sigma(\n,\m)=0$ and let $l$ be an even integer. Then we have the following bound;
$$|\!|G_{\n,\m,l}(z,s)|\!| \ll_\eps 1, $$
where the implied constant might depend on $|\n|,|\m|,l$.
\end{lemma}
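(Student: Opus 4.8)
The plan is to reduce to the non-holomorphic Eisenstein series via the pullout identities (\ref{pullout1}) and (\ref{pullout}). Since $\Sigma(\n,\m)=0$, every element of $\n$ and $\m$ is $0$, so iterating (\ref{pullout1}) and (\ref{pullout}) gives
\begin{align}\label{reduction-to-eis}
G_{\n,\m,l}(z,s)= (y^{k/2}f(z))^{|\n|}(y^{k/2}\overline{f(z)})^{|\m|}\,E_{l'}(z,s),
\end{align}
where $l'=l-k(|\n|-|\m|)$ and $E_{l'}(z,s)$ is the weight-$l'$ non-holomorphic Eisenstein series (exactly as in (\ref{00}) in the proof of Proposition \ref{analyticont}). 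Since $f$ is a cusp form, $y^{k/2}f(z)$ and $y^{k/2}\overline{f(z)}$ are bounded on $\Gamma\backslash\H$ — indeed they decay rapidly at every cusp — so the pointwise bound $|y^{k/2}f(z)|\ll 1$ holds uniformly. Hence
$$\|G_{\n,\m,l}(z,s)\| \ll_{|\n|,|\m|} \|E_{l'}(z,s)\|,$$
and the task is reduced to bounding $\|E_{l'}(z,s)\|$ uniformly for $s$ in the stated region $1/2+\eps\le \sigma\le 3/2$, $\dist(s,\mathcal{P})\ge\eps$.

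For this last bound I would follow the argument of \cite[Lemma 3.1]{PeRi2}: write $E_{l'}(z,s)=\hat E_{l'}(z,s)+\text{(main terms)}$, where one truncates the Eisenstein series high up in the cusp. More precisely, one splits $E_{l'}(z,s)$ against a smooth cutoff into a piece supported in a fixed compact part of $\Gamma\backslash\H$ and a piece in the cuspidal region; on the compact part one uses that $E_{l'}(z,s)$ is, away from its poles, a holomorphic-in-$s$ family of smooth functions, hence locally bounded in $L^2$ of a fixed compact set, with the dependence on $t$ controlled by the resolvent bound of Lemma \ref{resbnd} together with $\dist(s,\mathcal{P})\ge\eps$ — this is exactly where the hypothesis $\dist(s,\mathcal{P})\ge\eps$ enters, guaranteeing $\|R(s,l')\|\ll_\eps 1$. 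In the cuspidal region one uses the explicit constant term $\varphi(s)$ of $E_{l'}$, whose growth on the line $\sigma\in[1/2+\eps,3/2]$ is polynomially bounded in $t$ — but in fact, since we only integrate the truncated tail $\int_Y^\infty(\cdots)\,dy/y^2$ over a fixed truncation height and the non-constant Fourier terms decay exponentially, the tail contributes $O_\eps(1)$ after subtracting the explicitly computable divergent piece of $y^s+\varphi(s)y^{1-s}$, which is absorbed because for $\sigma$ in the stated strip one checks the relevant integral $\int_Y^\infty|y^s+\varphi(s)y^{1-s}|^2\,dy/y^2$ is finite and bounded. Assembling the compact and cuspidal contributions gives $\|E_{l'}(z,s)\|\ll_\eps 1$.

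I expect the main obstacle to be the bookkeeping in the cuspidal region: the Eisenstein series is not in $L^2$ on the nose (its constant term grows like $y^{\sigma}$ for $\sigma>1/2$), so one must either work with the truncated Eisenstein series $E^Y_{l'}(z,s)=E_{l'}(z,s)-(\text{constant term above height }Y)$ and separately estimate the removed piece, or invoke the Maass–Selberg relations to compute $\|E^Y_{l'}(z,s)\|^2$ explicitly and check its boundedness in the given region. Either way, verifying that the bound is uniform in $t$ and does not blow up as $\sigma\to 1/2+\eps$ or near the excluded set $\mathcal{P}$ is the delicate point; the uniformity away from $\mathcal{P}$ is precisely what Lemma \ref{resbnd} furnishes. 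The higher-weight case $l'\neq 0$ introduces no new difficulty beyond keeping track of the weight in the functional equation and the constant term of $E_{l'}$, since the relevant spectral and growth estimates for weight-$l'$ Eisenstein series are formally identical to the weight-$0$ case treated in \cite{PeRi2}.
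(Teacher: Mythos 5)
Your reduction to the Eisenstein series via the pull-out identities \eqref{pullout1}, \eqref{pullout} is correct and matches the paper's first step. But the next step — ``hence $\|G_{\n,\m,l}(z,s)\| \ll \|E_{l'}(z,s)\|$, and the task is reduced to bounding $\|E_{l'}(z,s)\|$'' — is a genuine logical gap, because $\|E_{l'}(z,s)\| = \infty$ for $\Re s > 1/2$: the constant term of $E_{l'}$ grows like $y^\sigma$, which is not square-integrable against $dx\,dy/y^2$. You do flag this problem a paragraph later, but at that point the proof has already been structured around bounding an infinite quantity, and the proposed repairs (truncation and Maass--Selberg, or a vague subtraction of the divergent piece) are neither carried out nor obviously uniform in $t$: in particular, the Maass--Selberg route requires controlling the scattering term $\varphi(s)$ on the line $\Re s=\sigma$, which is not $O_\eps(1)$ in $t$ in general.

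The correct restructuring, and the one the paper uses (following Colin de Verdi\`ere as in \cite[Lemma 3.1]{PeRi2}), is to \emph{never} pass to $\|E_{l'}\|$. Instead, write $E_{l'}(z,s) = h(y)y^s + g(z,s)$ with $h$ a smooth cutoff equal to $1$ near $\infty$. Then $(\Delta_{l'}-\lambda(s))g = s h'(y)y^{s+1} + h''(y)y^{s+2}$, which is compactly supported, so $g(z,s) = R(s,l')\bigl(s h'(y)y^{s+1}+h''(y)y^{s+2}\bigr)$ and Lemma \ref{resbnd} together with $\dist(s,\mathcal{P})\ge\eps$ gives $\|g(z,s)\|\ll_\eps 1$. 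Now split $\|G_{\n,\m,l}\|$ accordingly: for the $g$-piece, mere boundedness of $y^{k/2}f$ suffices; but for the $h(y)y^s$-piece, boundedness is \emph{not} enough — you must use the \emph{rapid decay} of $y^{k/2}f(z)$ at $\infty$ to make $(y^{k/2}f)^{|\n|}(y^{k/2}\bar f)^{|\m|}h(y)y^s$ square-integrable. Your proof uses rapid decay only as a throwaway remark justifying boundedness, when in fact it is the essential ingredient that handles the divergent part of the Eisenstein series. In short: the decomposition of $E_{l'}$ must happen \emph{before} you take norms, and the two pieces require two different properties of $f$.
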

\begin{proof}
By the assumption $\Sigma(\n,\m)=0$, we can write
$$ G_{\n,\m,l}(z,s)=(y^{k/2}f(z))^{|\n|}(y^{k/2}\overline{f(z)})^{|\m|}E_{l'}(z,s),   $$
with $l'$ appropriately adjusted. 

Let $\mathcal{F}$ be fundamental domain for $\Gamma \backslash \H$ with a cusp at infinity. For $\Re s>1/2$ and $z\in \mathcal{F}$, we write (following Colin de Verdière \cite{CodeVe83});
$$ E_{l'}(z,s)= h(y)y^s+g(z,s), $$
where $g(z,s)\in L^2(\mathcal{F})$ and $h(y)\in C^\infty(0,\infty)$ is smooth with $h(y)=1$ near the cusp at $\infty$. 
Since $E_{l'}(z,s)$ is a formal eigenfunction for the Laplacian, we have
\begin{align*}(\Delta_{l'}-\lambda(s))g(z,s)&=(\Delta_{l'}-\lambda(s))(E_{l'}(z,s)-h(y)y^s)\\
&=\lambda(s)h(y)y^s+ sh'(y)y^{s+1}+h''(y)y^{s+2}-\lambda(s)h(y)y^s\\
&=sh'(y)y^{s+1}+h''(y)y^{s+2}.  \end{align*}
Now we extend $g(z,s)$ periodically to an element of $L^2(\Gamma, l')$. Then the above yields
$$g(z,s)= R(s,l')(sh'(y)y^{s+1}+h''(y)y^{s+2}),$$
i.e. $g(z,s)$ equals the resolvent applied to a function with compact support.
 
Now by the bound on the norm of the resolvent from Lemma \ref{resbnd}, we get 
$$ |\!|g(z,s)|\!|\leq \frac{|\!|sh'(y)y^{s+1}+h''(y)y^{s+2}|\!|}{\dist(\lambda(s), \spec \Delta_{l'})}. $$
Since $\Delta_{l'}$ is self adjoint, all eigenvalues are real. Thus using the assumption $\dist(s, \mathcal{P})\geq \eps$, we get
$$\dist(\lambda(s), \spec \Delta_{l'}) \gg |\Im (\lambda(s))|+\eps= (2\sigma-1)|t|+\eps.$$ 
This gives
$$ |\!|g(z,s)|\!| \ll  \frac{|\!|sh'(y)y^{s+1}+h''(y)y^{s+2}|\!|}{(2\sigma-1)|t|+\eps }\ll \frac{|s|}{|t|+\eps} \ll_\eps 1.$$
Now by the above, we have
\begin{align*}|\!|G_{\n,\m,l}(z,s)|\!|&\\ 
\leq |\!|(y^{k/2}f(z))^{|\n|}&(y^{k/2}\overline{f(z)})^{|\m|} h(y)y^s |\!| + |\!|  (y^{k/2}f(z))^{|\n|}(y^{k/2}\overline{f(z)})^{|\m|} g(z,s)  |\!|. \end{align*}
The second term is bounded by what we showed above and by the rapid decay of $f$, the first term is bounded uniformly in $s$ as well. Thus we conclude $|\!|G_{\n,\m,l}(z,s)|\!|\ll_\eps 1$ as wanted. 
\end{proof}

With this done, we can do the general case by induction on the total weight $\Sigma(\n,\m)$ using the recursion formula (\ref{rec}) and the bound on the operator norm of the resolvent in Lemma \ref{resbnd}.
\begin{prop}\label{Growth}
Let $\eps>0$ and $s=\sigma+it$ satisfying $1/2+\eps \leq \sigma \leq 3/2$ and $\dist(s, \mathcal{P})\geq \eps$. Let $\n,\m$ be multi-sets satisfying $|\n|+|\m|>0$ and $l$ an even integer. Then we have
$$|\!|G_{\n,\m,l}(z,s)|\!| \ll_\eps 1, $$
where the implied constant depends on $|\n|,|\m|, l$.
\end{prop}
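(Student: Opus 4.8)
The plan is to prove Proposition \ref{Growth} by induction on the total weight $\Sigma(\n,\m)$, exactly mirroring the structure of the proofs of Proposition \ref{analyticont} and Corollary \ref{weightnot0}. The base case $\Sigma(\n,\m)=0$ is precisely the lemma just established, so we may assume $\Sigma(\n,\m)>0$ and that the bound holds for all smaller values of the total weight. By applying modularity as in (\ref{pullout1}) and (\ref{pullout}) — which only multiplies $G_{\n,\m,l}$ by $(y^{k/2}f(z))^j$ or $(y^{k/2}\overline{f(z)})^j$ for some $j$, factors that are bounded in $L^\infty$ by the rapid decay of $f$ at every cusp — we reduce to the case where $\n,\m$ are positive multi-sets. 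By the symmetry (\ref{conj}), $\overline{G_{\n,\m,l}(z,s)}=G_{\m,\n,-l}(z,\overline s)$, so we may also assume $l\geq 0$.

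First I would treat the weight $l=0$ case. By the recursion formula (\ref{rec}), valid for $\Re s>1/2$ after the meromorphic continuation of Proposition \ref{analyticont},
$$G_{\n,\m,0}(z,s)= R(s+\alpha(\n,\m),0)\Bigl( \text{\it a linear combination of $G_{\n',\m',0}(z,s)$'s with $\Sigma(\n',\m')<\Sigma(\n,\m)$, with polynomial-in-$s$ coefficients}\Bigr).$$
By the induction hypothesis, each of the inner Poincar\'e series has $L^2$-norm $\ll_\eps 1$ on the region $1/2+\eps\leq \sigma\leq 3/2$, $\dist(s,\mathcal{P})\geq \eps$; the polynomial coefficients are $\ll |s|^{O(1)}$ there, which is not uniformly bounded on vertical lines, so this requires the same trick as in the base-case lemma: one gains a factor from the resolvent bound. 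By Lemma \ref{resbnd}, $\|R(s+\alpha,0)\|\leq 1/\dist(\lambda(s+\alpha),\spec\Delta_0)$, and since $\Delta_0$ is self-adjoint with real spectrum, $\dist(\lambda(s+\alpha),\spec\Delta_0)\gg |\Im\lambda(s+\alpha)| + \eps = (2(\sigma+\alpha)-1)|t| + \eps \gg |t|+\eps$ using the hypothesis $\dist(s,\mathcal{P})\geq\eps$ together with $\sigma\geq 1/2+\eps$ and $\alpha\geq 0$. Hence the resolvent contributes a factor $\ll 1/(|t|+\eps)$, which beats the $\ll |s|^{O(1)}\ll (1+|t|)^{O(1)}$ growth of the coefficients — wait, that is the obstacle: the coefficients grow polynomially in $|t|$ while the resolvent only gains one power of $|t|^{-1}$.

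To handle this I would be more careful: the dangerous coefficients in (\ref{rec}) are $s+\alpha(\n,\m)\pm l/2$, which are genuinely $\ll |t|$, so a naive estimate gives $\|G_{\n,\m,0}\|\ll |t|^{-1}\cdot |t| \cdot 1 = 1$ for the worst term and $\|G_{\n,\m,0}\|\ll |t|^{-1}\cdot 1$ for the $G_{\n_\nn,\m_\mm,0}$ term, so in fact the bound $\ll_\eps 1$ does follow, because each application of the resolvent is paired with at most one factor linear in $s$. Concretely: $G_{\n_\nn,\m,0}$ comes with coefficient $s+\alpha-l/2$ of size $\ll|t|$, but $R(s+\alpha,0)$ contributes $\ll |t|^{-1}$, so the product is $\ll 1$; the term $G_{\n_\nn,\m_\mm,0}$ has bounded coefficient and gains $|t|^{-1}$. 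So the $l=0$ case is fine. Then I would pass to general even $l$ by an inner induction on $|l|$ using (\ref{weightup}): from $K_l G_{\n,\m,l}(z,s) = (s+\alpha(\n,\m)+l/2)G_{\n,\m,l+2}(z,s) - \sum_{\nn\in\n}G_{\n_\nn,\m,l+2}(z,s)$, rearranged as
$$G_{\n,\m,l+2}(z,s) = \frac{K_l G_{\n,\m,l}(z,s) + \sum_{\nn\in\n}G_{\n_\nn,\m,l+2}(z,s)}{s+\alpha(\n,\m)+l/2};$$
the $G_{\n_\nn,\m,l+2}$ terms have strictly smaller total weight so are controlled by the outer induction, and the denominator $s+\alpha+l/2$ is $\gg |t|+1$. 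The genuine difficulty here is that $\|K_l G_{\n,\m,l}(z,s)\|$ is not obviously $\ll |t|\cdot\|G_{\n,\m,l}(z,s)\|$ — the raising operator is unbounded — so one cannot simply bound the numerator by the $L^2$-norm of $G_{\n,\m,l}$.

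To overcome this last obstacle — the main one — I would instead argue directly, the same way the $\Sigma=0$ lemma was proved: write $G_{\n,\m,l}(z,s) = R(s+\alpha(\n,\m),l)\,F(z,s)$ where $F(z,s)$ is the explicit right-hand side of (\ref{rec}), then note that $K_l$ (or the relevant combination) applied to the resolvent can be re-expressed, via the defining relation $(\Delta_l - \lambda(s+\alpha))R(s+\alpha,l) = \Id$ and the factorizations (\ref{delta}) of $\Delta_l$ in terms of $K$ and $L$, as bounded operators times $R$. Alternatively, and more cleanly, I would avoid $K_l$ entirely: redo the $l\neq 0$ case by applying the recursion (\ref{rec}) \emph{at weight $l$ directly} rather than bootstrapping from weight $0$ — formula (\ref{rec}) holds for every even $l$, expressing $G_{\n,\m,l}$ as $R(s+\alpha(\n,\m),l)$ applied to a polynomial-in-$s$ combination of lower-total-weight $G_{\n',\m',l}$'s, and then the same resolvent-bound estimate as in the $l=0$ analysis applies verbatim, since Lemma \ref{resbnd} is stated for all weights and $\Delta_l$ is self-adjoint with real spectrum so $\dist(\lambda(s+\alpha),\spec\Delta_l)\gg |t|+\eps$ on our region. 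This closes the induction. The upshot: the only subtlety is matching each factor of $s$ (size $\ll|t|$) in the coefficients of (\ref{rec}) against the $\ll |t|^{-1}$ gain from one resolvent, and observing that the pairing is always one-to-one, so the net contribution is $\ll_\eps 1$; the decay of $f$ at cusps handles the modularity reductions, and self-adjointness of $\Delta_l$ handles the resolvent estimate uniformly in $l$.
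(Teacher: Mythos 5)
Your final argument --- induction on the total weight, reduction to positive multi-sets via modularity and boundedness of $y^{k/2}f(z)$, and then applying the recursion (\ref{rec}) \emph{directly at weight $l$} so that the coefficient of size $\ll |t|+1$ is cancelled by the $\ll (|t|+\eps)^{-1}$ gain from Lemma \ref{resbnd} and the real spectrum of $\Delta_l$ --- is exactly the paper's proof, and it is correct. The detour you first consider (weight $0$ plus raising via (\ref{weightup})) is indeed problematic for the reason you identify, but you discard it yourself in favor of the route the paper takes, so there is nothing to fix.
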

\begin{proof}
We proceed by induction. Above we have done the base case so we may assume that $\Sigma(\n,\m)>0$. By applying modularity we may assume that $\n$ and $\m$ are positive (since $y^{k/2}f(z)$ is bounded). Now by (\ref{rec}) and Lemma \ref{resbnd}, we get
\begin{align*}
|\!|G_{\n,\m,l}(z,s)|\!| \leq \frac{|\!|\text{\it RHS of (\ref{rec})}|\!| }{\dist(\lambda(s+\alpha(\n,\m)),\spec(\Delta_{l}))}.  
\end{align*}
By the induction assumption and the triangle inequality, we see that 
$$|\!|\text{\it RHS of (\ref{rec})}|\!| \ll_\eps |t|+1,$$
using 
$$ s+\alpha(\n,\m)\pm l/2\ll |t|+1, $$
where the implied constant depends on $|\n|, |\m|,l $.

Now since the spectrum of $\Delta_l$ is real, we get
$$  \dist(\lambda(s+\alpha(\n,\m)),\spec(\Delta_{l})) \gg_\eps |t(2\sigma-1)|+\eps \gg_\eps |t|+\eps,$$
using the assumption dist($s,\mathcal{P})\geq \eps$. This gives
$$|\!|G_{\n,\m,l}(z,s)|\!| \ll_\eps \frac{|t|+1}{|t|+\eps}\ll_\eps 1, $$
as wanted.  
\end{proof}

\section{Central values of additive twists} \label{central}
In this section we will use the results from the preceding section to study the central values of additive twists. To state our main theorem in the most general version, we will need to work with more general twists than the ones described in the introduction (as was alluded to in Remark \ref{remgen}). To do this we need to introduce some notation. Given a discrete, co-finite subgroup $\Gamma$ of $\PSL_2(\R)$ with a cusp at $\infty$ of width 1 and two cusps $\mathfrak{a}$ and $\mathfrak{b}$ of $\Gamma$ (not necessarily distinct), we define the following set (following \cite{PeRi});
\begin{align}\label{T}T_{\Gamma,\mathfrak{a}\mathfrak{b}}=T_{\mathfrak{a}\mathfrak{b}}:=\left\{ r= a/c \modulo 1\mid \begin{pmatrix} a & b\\ c & d \end{pmatrix}\in \Gamma_\infty \backslash \sigma_\mathfrak{a}^{-1}\Gamma\sigma_\mathfrak{b} / \Gamma_\infty, c>0 \right\},\end{align}
where $\Gamma_\infty$ denotes the parabolic subgroup of $\Gamma$ fixing $\infty$ and $\sigma_\mathfrak{a}$ denotes a (fixed) scaling matrix of $\mathfrak{a}$ (see \cite[(2.1)]{Iw} for background). Observe that $T_{\infty\mathfrak{b}}$ contains exactly the additive twists by the cusps $\Gamma$-equivalent to $\mathfrak{b}$ (thought of as real numbers).
 
Any $r\in T_{\mathfrak{a}\mathfrak{b}}$ uniquely determines an element in the double quotient $\Gamma_\infty \backslash \sigma_\mathfrak{a}^{-1}\Gamma\sigma_\mathfrak{b} / \Gamma_\infty$ \cite[Proposition 2.2]{PeRi}. Thus given $r\in T_{\mathfrak{a}\mathfrak{b}} $, we can define $c(r)$ as the left-lower entry of any such representative. Using this we define 
\begin{align}\label{T(X)}T_{\Gamma,\mathfrak{a}\mathfrak{b}}(X)=T_{\mathfrak{a}\mathfrak{b}}(X):=\{r\in T_{\mathfrak{a}\mathfrak{b}}\mid c(r)\leq X\}.\end{align}
We observe that for $\Gamma=\Gamma_0(q)$, we get $T_{\infty\infty}(X)=T(X)$ with $T(X)$ defined as in (\ref{TQ}). We will below continue to use the shorthand $T(X)=T_{\infty\infty}(X)$, when there is no danger for confusion.  

Using this notation we can now state the most general statement that we can prove with our methods.
\begin{thm} \label{maingeneral}
Let $\Gamma$ be a discrete and co-finite subgroup of $\PSL_2(\R)$ with a cusp at infinity of width 1, $\mathfrak{b}$ a cusp of $\Gamma$, $k$ an even integer and $f_1,\ldots, f_d$ an orthogonal basis for the space of weight $k$ cusps forms $S_k(\Gamma)$ with respect to the Petersson inner product. Then for any fixed box $\Omega\subset \C^d$, we have 
\begin{align*}
&\mathbb{P}_{T_{\infty\mathfrak{b}}(X)}\left( \left(\frac{L(f_i,r,k/2)}{(C_{f_i} \log c(r))^{1/2}}\right)_{1\leq i \leq d} \in \Omega \right)\\
:&= \frac{\#\left\{ r\in T_{\infty\mathfrak{b}}(X) \mid \left(\frac{L(f_i,r,k/2)}{(C_{f_i} \log c(r))^{1/2}}\right)_{1\leq i \leq d} \in \Omega\right\}}{\#T_{\infty\mathfrak{b}}(X)}\\
&=\mathbb{P}\left( (Y_1,\ldots,Y_d)^T\in \Omega\right)+o(1)
\end{align*}
as $X\rightarrow \infty$, where $Y_1,\ldots, Y_d$ are mutually independent random variables all of which are distributed with respect to the standard complex normal distribution $\mathcal{N}_\C(0,1)$ and 
\begin{align}\label{Cf2}C_f=\frac{(4\pi)^{k}|\!|f|\!|^{2}}{(k-1)!\, \vol(\Gamma)},\end{align}
with $|\!|f|\!|$ the Petersson-norm of $f$ and $\vol(\Gamma)$ the hyperbolic volume of $\Gamma\backslash \H$.\\
(Here $\mathbb{P}( (Y_1,\ldots,Y_d)^T\in \Omega)$ denotes the probability of the event $ (Y_1,\ldots,Y_d)^T\in \Omega$.)
\end{thm}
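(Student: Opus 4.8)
The plan is to prove Theorem \ref{maingeneral} by the method of moments, following the skeleton outlined in Section \ref{method}, with the main new input being the analytic properties of the Poincar\'e series $G_{\n,\m,l}(z,s)$ established in Section \ref{Poincare}. Concretely, one reduces convergence in distribution of the vector $\left(L(f_i,r,k/2)/(C_{f_i}\log c(r))^{1/2}\right)_i$ to showing that for every pair of multi-indices $(\mathbf m,\mathbf n)\in \N_0^d\times \N_0^d$ the mixed moments
\begin{align}\label{plan:moments}
\frac{1}{\#T_{\infty\mathfrak b}(X)}\sum_{r\in T_{\infty\mathfrak b}(X)} \prod_{i=1}^d \frac{L(f_i,r,k/2)^{m_i}\overline{L(f_i,r,k/2)}^{n_i}}{(C_{f_i}\log c(r))^{(m_i+n_i)/2}}
\end{align}
converge as $X\to\infty$ to the corresponding mixed moments of $(Y_1,\dots,Y_d)$, i.e.\ to $\prod_i \mathbb E[Y_i^{m_i}\overline{Y_i}^{n_i}]$, which vanish unless $m_i=n_i$ for all $i$ and otherwise equal $\prod_i n_i!$. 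Since the $Y_i$ are standard complex normal and independent, their moments grow slowly enough that Fr\'echet--Shohat (the multivariate method of moments, \cite[p.\ 17]{Serf}) applies and upgrades moment convergence to convergence in distribution; the real and imaginary parts are handled simultaneously since $\mathcal N_\C(0,1)$ is determined by its moments.

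The analytic heart is to evaluate the numerator sums $\sum_{r\in T_{\infty\mathfrak b}(X)} \prod_i L(f_i,r,k/2)^{m_i}\overline{L(f_i,r,k/2)}^{n_i}$. First I would introduce the generating Dirichlet series $D^{\mathbf m,\mathbf n}(s)$ summing $\prod_i L(f_i,\gamma\infty,k/2)^{m_i}\overline{L(f_i,\gamma\infty,k/2)}^{n_i}/c_\gamma^{2s}$ over $\gamma\in\Gamma_\infty\backslash\sigma_\mathfrak b^{-1}\Gamma\sigma_\mathfrak b/\Gamma_\infty$ (the version of \eqref{lfunc} at the cusp $\mathfrak b$), and identify it up to the explicit $\Gamma$-factor $\pi^{1/2}\Gamma(s-1/2)/\Gamma(s)$ with the constant Fourier coefficient of the generalized Goldfeld Eisenstein series $E^{\mathbf m,\mathbf n}(z,s)$ built from $\prod_i L(f_i,\gamma\infty,k/2)^{m_i}\overline{\cdots}^{n_i}$ (Lemma \ref{fourier}, to appear). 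The key step is the \emph{automorphic completion}: using the integral representation \eqref{periodint}/\eqref{7.1.1} expressing $L(f_i,\gamma\infty,k/2)$ in terms of $I_1(\gamma\infty)$ (more precisely a telescoping/boundary-term identity for the iterated antiderivatives $I_\nn$), one writes $E^{\mathbf m,\mathbf n}(z,s)$ as an explicit finite linear combination of the Poincar\'e series $G_{\n,\m,l}(z,s)$ with $l=0$ and $\n,\m$ multi-sets drawn from $\{0,\dots,k/2\}$. Propositions \ref{analyticont}, \ref{polebound}, Theorem \ref{main} and Corollary \ref{weightnot0} then give: meromorphic continuation to $\Re s>1/2$; a pole at $s=1$ of order at most $\min(\#\{k/2\in\n\},\#\{k/2\in\m\})+1$; and, in the balanced diagonal case $\mathbf m=\mathbf n$, an exact pole of order $n_i$-dependent degree with the computed leading coefficient involving $\prod_i \|f_i\|^{2n_i}$. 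Feeding the growth bound of Proposition \ref{Growth} into the contour-integration argument of Appendix \ref{complexanalysis} (Theorem \ref{approxthm}) converts the polar data of $D^{\mathbf m,\mathbf n}(s)$ at $s=1$ into an asymptotic $\sum_{r\in T_{\infty\mathfrak b}(X)}(\cdots)=Q_{\mathbf m,\mathbf n}(\log X)X^2+O_\eps(X^{4/3+\eps})$ with $\deg Q_{\mathbf m,\mathbf n}$ equal to the pole order minus one and leading coefficient read off from the leading Laurent coefficient; dividing by $\#T_{\infty\mathfrak b}(X)\sim c_\Gamma X^2$ and by $(C_{f_i}\log c(r))^{(m_i+n_i)/2}$ (using $\log c(r)=\log X+o(\log X)$ on the bulk of the range, absorbing the boundary) produces exactly $0$ when some $m_i\neq n_i$ (because then the pole order is strictly smaller than the normalizing power of $\log X$, so the moment tends to $0$) and produces $\prod_i n_i!$ when $\mathbf m=\mathbf n$, after checking that the constant $C_{f_i}=(4\pi)^k\|f_i\|^2/((k-1)!\vol\Gamma)$ in \eqref{Cf2} is precisely what makes the combinatorial and volume factors cancel. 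The orthogonality of the $f_i$ enters in guaranteeing that cross terms in the automorphic completion carry strictly lower total weight $\Sigma(\n,\m)$ or off-diagonal structure and hence do not contribute to the top-order pole, which is what yields independence of the limiting Gaussians.

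The main obstacle, as the authors flag in Section \ref{method}, is the automorphic completion step in higher weight $k\geq 4$: for $k=2$ the additive twist is directly $\propto I_1(\gamma\infty)$, but for $k\geq 4$ one must handle the full tower $I_1,\dots,I_{k/2}$ of iterated antiderivatives, and the identity expressing $L(f,\gamma\infty,k/2)$ through these (and through the boundary values at the two cusps $\gamma\infty$ and $\gamma^{-1}\infty$, cf.\ Proposition \ref{AddFE}) is what forces the multi-set formalism and the scaling $\alpha(\n,\m)$ in \eqref{G}. A secondary difficulty is bookkeeping the exact pole order in the off-diagonal cases tightly enough that the normalization by $(\log c)^{(m_i+n_i)/2}$ genuinely kills those moments rather than leaving a bounded nonzero contribution --- this is exactly where Proposition \ref{polebound}'s bound $\min(\#\{k/2\in\n\},\#\{k/2\in\m\})+1$, rather than a cruder bound, is essential. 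I would carry out the $d=1$, $\mathfrak b=\infty$ case in full first (as the paper announces it does, in Section \ref{central}), and then indicate the straightforward modifications for a general cusp $\mathfrak b$ (replace $\Gamma_0(q)$ double cosets by $\Gamma_\infty\backslash\sigma_\mathfrak b^{-1}\Gamma\sigma_\mathfrak b/\Gamma_\infty$ and use the decay of $I_\nn$ at all cusps from Lemma \ref{regionofconv}) and for a basis $f_1,\dots,f_d$ (multilinear expansion plus the orthogonality-driven vanishing of mixed leading terms).
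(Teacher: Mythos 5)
Your outline is the paper's own strategy: reduce to mixed moments via Fr\'echet--Shohat (plus Cram\'er--Wold), realize the generating Dirichlet series as the constant Fourier coefficient of a Goldfeld-type Eisenstein series, perform automorphic completion into the Poincar\'e series $G_{\n,\m,l}(z,s)$, and feed the pole data of Section \ref{Poincare} through the contour-integration appendix; the joint-basis case is treated exactly as you indicate, with the leading Laurent coefficient producing $\sum_{\sigma,\sigma'\in S_n}\prod_j\langle g_{\sigma(j)},h_{\sigma'(j)}\rangle$, so Petersson orthogonality is what yields independence of the limiting Gaussians.

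One step, as you wrote it, would not go through, and it is precisely the point the paper flags as new for $k\geq 4$: the central-value formula (Lemma \ref{spectrallemma}) contains \emph{negative} powers of $c$, so $E^{m,n}(z,s)$ itself cannot be expanded in the $G_{\n,\m,l}$. The paper first multiplies by $c^{N}$ with $N\geq(m+n)(k-2)/2$ even and works with the $N$-shifted series $E^{m,n}(z,s;N)$ of (\ref{N-shift}), recovering $D^{m,n}(f,s-N/2)$ from its constant term. Relatedly, the completion is not into weight-$0$ Poincar\'e series only: expanding $c^{N}$ via (\ref{c}) and the binomial theorem produces terms $G_{\n,\m,l}(z,s-N/2)$ of many even weights $l$, and it is Corollary \ref{weightnot0} (pole order drops when $l\neq 0$) that isolates the single $l=0$, central-binomial term carrying the top-order pole and hence the leading coefficient in Proposition \ref{propeis}; your plan cites that corollary but the ``$l=0$ only'' description skips the place where it is actually needed. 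Finally, the passage to a general cusp $\mathfrak{b}$ is not just a change of double coset: for $k\geq 4$ the twist $L(f,\gamma\mathfrak{b},k/2)$ is not well defined on $\Gamma_\infty\backslash\Gamma/\Gamma_\mathfrak{b}$, so one must use the Goldfeld series at $\mathfrak{b}$ as in (\ref{generaleisenstein}) together with the modified central-value formula (\ref{speclemma2}) and a shift by $j(\gamma,\mathfrak{b})^{N}$; one also needs an a priori polynomial bound on $L(f,r,k/2)$ for $r\in T_{\infty\mathfrak{b}}$ (via Birch--Stevens and convexity) to start the argument, since no clean functional equation is available there. With these repairs your outline coincides with the paper's proof.
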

Recall from Section \ref{method} that our strategy of proof is to use the method of moments. To obtain asymptotic formulas for the moments of additive twists, we will be studying the associated generating series. For additive twists at arbitrary cusps this generating series is defined as follows;
\begin{align}\label{lfunc2}D^{m,n}_\mathfrak{b}(f,s):= \sum_{r\in T_{\infty\mathfrak{b}}} \frac{L(f,r,k/2)^m \overline{L(f,r,k/2)}^n}{c(r)^{2s}}.  \end{align}
We study this generating series by studying the associated Goldfeld Eisenstein series (defined in (\ref{generaleisenstein}) below), which is linked to the Poincaré series $G_{\n,\m,l}(z,s)$ via a formula for the central values of additive twists that we will prove shortly. 

To make the proof more readable, we will restrict to the case of $\mathfrak{b}=\infty$ and a single cusp form $f\in \mathcal{S}_k(\Gamma)$. In Section \ref{difcusp} and Section \ref{jointdist}, we will then explain how the proof can be extended to the general case.

\begin{remark}
To make our argument work, we will need to know apriori that (\ref{lfunc2}) converges absolutely in some half-plane $\Re s>\sigma_0$. By the argument given in Section \ref{convexity}, it is enough to show that 
$$L(f,r,k/2)\ll c(r)^K,$$ 
for some $K\geq 0$. Since we do not have a nice functional equation for general additive twists (see the discussion in Remark \ref{DoHoKiLe} above), the easiest way to achieve this seems to be to combine the (generalized) Birch--Stevens formula (see (\ref{birchstevens2}) below) with the convexity bound for the twisted central values $L(f, \chi,1/2)$. We will not go into the details, but just make it clear that one can easily show an apriori polynomial bound of $L(f,r,k/2)$. 
\end{remark}
 
\subsection{A formula for the central value}\label{formulaforcentralvalue} 
In this section we will prove the promised formula, which expresses the generalized Goldfeld series $E^{m,n}(z,s)$ defined in (\ref{eis}) as a sum of the Poincaré series $G_{\n,\m,l}(z,s)$ studied in the preceding section. This generalizes to higher weight what Bruggeman and Diamantis \cite{BrDia16} for weight 2 call {\it automorphic completion}. 

So let $\Gamma$ be a discrete and co-finite subgroup with a cusp at $\infty$ of width 1 and fix a cusp form $f\in \mathcal{S}_k(\Gamma)$ of even weight $k$. Then we will be interested in the central values $L(f,r,k/2)$ of the additive twists by $r\in T=T_{\infty\infty}$, which we will try to relate to the anti-derivatives of $f$ (denoted by $I_n$ above).
 
The starting point is the period integral representation of $L(f,\gamma \infty,s)$ with $\gamma\in \Gamma$ given in (\ref{3.2.1}). A slight variation of this yields with $a/c=\gamma \infty$ the following;
\begin{align*}
i(2\pi)^{-k/2}\Gamma(k/2) L(f,a/c,k/2)=\int_{\gamma \infty}^{i\infty} f(w) \left(\frac{w-a/c}{i}\right)^{(k-2)/2}dw.
\end{align*}
Observe that the integrand above is holomorphic and thus it follows by the vanishing of $f$ at the cusps that we can shift the contour and arrive at   
\begin{align}\nonumber &(-2\pi i)^{-k/2}\Gamma(k/2)L(f,a/c,k/2)\\
\label{1} &= \int_{\gamma \infty}^{\gamma z} f(w) (w-a/c)^{(k-2)/2} dw +\int_{\gamma z}^{i\infty} f(w) (w-a/c)^{(k-2)/2} dw \end{align}
for any $z\in \H$. 

This expression will allow us to prove the following formula for the central value (here it is crucial that $k$ is even).
\begin{lemma} \label{spectrallemma}Let $z\in \H$ and $\gamma=\begin{psmallmatrix}a&b\\c&d\end{psmallmatrix}\in \Gamma $. Then we have
\begin{align}\nonumber L(f,\gamma \infty,k/2)= & \Biggr((-1)^{k/2}\sum_{0\leq j \leq (k-2)/2}  \frac{(\frac{k-2}{2})!}{j!} c^{-j} j(\gamma,z)^{-j} I_{k/2-j}(\gamma z) \\
\label{important}+&\sum_{0\leq j\leq (k-2)/2} (-1)^j\frac{(\frac{k-2}{2})!}{j!} c^{-j}j(\gamma, z)^j I_{k/2-j}(z)\Biggr)\frac{(-2\pi i)^{k/2}} {\Gamma(k/2)},\end{align}
where $I_n$ is the $n$-fold anti-derivative of $f$ defined in (\ref{7.1.1}).
\end{lemma}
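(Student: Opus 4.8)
The plan is to start from the shifted period integral representation (\ref{1}), which already expresses $(-2\pi i)^{-k/2}\Gamma(k/2)L(f,a/c,k/2)$ as the sum of two integrals, one from $\gamma\infty$ to $\gamma z$ and one from $\gamma z$ to $i\infty$, of $f(w)(w-a/c)^{(k-2)/2}dw$. The key observation is that $(k-2)/2$ is a non-negative integer (this is exactly where evenness of $k$ enters), so $(w-a/c)^{(k-2)/2}$ is a polynomial in $w$ of degree $(k-2)/2$ and we may integrate by parts repeatedly, each time differentiating the polynomial factor and anti-differentiating $f$, i.e. passing from $I_j$ to $I_{j+1}$. Carrying out $(k-2)/2$ integrations by parts on each of the two integrals turns $\int f(w)(w-a/c)^{(k-2)/2}dw$ into a telescoping sum $\sum_{0\le j\le (k-2)/2}(\pm 1)^j\frac{((k-2)/2)!}{j!}(\cdot-a/c)^{j}I_{k/2-j}(\cdot)$ evaluated at the endpoints, with the boundary terms at $i\infty$ vanishing because all the $I_n$ decay (by (\ref{heckebound})) and the polynomial growth is killed.

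Next I would evaluate the resulting endpoint terms. For the integral $\int_{\gamma z}^{i\infty}$ the surviving boundary contribution is at $w=\gamma z$; here I would use the cocycle/modularity identities $\gamma z - a/c = \gamma z - \gamma\infty$, which by the standard formula equals something proportional to $j(\gamma,z)^{-1}$ times a power of $(z-\infty)$-type data — more precisely one uses $\gamma z - \gamma\infty = \frac{z-\infty}{(cz+d)(c\infty+d)}$ interpreted in the limit, giving $(\gamma z - a/c)$ as $-1/(c\, j(\gamma,z)\,|j(\gamma,z)|^{-1}\cdots)$; keeping careful track of $c^{-j}$ and $j(\gamma,z)^{-j}$ reproduces the first sum in (\ref{important}) with the sign $(-1)^{k/2}$ coming from $(-1)^{(k-2)/2}$ combined with the orientation of the contour shift. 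For the integral $\int_{\gamma\infty}^{\gamma z}$ I would instead change variables $w\mapsto \gamma w$ and use modularity of $f$, $f(\gamma w)=j(\gamma,w)^{k}|j(\gamma,w)|^{-k}\cdots$ — or more directly the weight-$k$ transformation $f(\gamma w)d(\gamma w)=j(\gamma,w)^{k-2}f(w)dw$ together with $\gamma w - a/c=\frac{w-\infty}{c(cw+d)}$ — to rewrite it as an integral from $\infty$ to $z$ of $f(w)$ against a polynomial in $w$, and then integrate by parts again to produce the second sum in (\ref{important}), now with values $I_{k/2-j}(z)$ and the alternating sign $(-1)^j$.

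The main obstacle I anticipate is purely bookkeeping: correctly matching every power of $c$, every factor $j(\gamma,z)^{\pm j}$, and every sign through the two integration-by-parts procedures and the change of variables, and in particular verifying that the $(k-2)/2$ genuinely is an integer-valued exponent so that no branch-cut issues arise and the boundary terms at $i\infty$ all vanish. The identities $j_{\gamma_1\gamma_2}(z)=j_{\gamma_1}(\gamma_2 z)j_{\gamma_2}(z)$ from (\ref{j}) and the elementary formula for $\gamma z - \gamma w$ do the real work, but assembling them so the two pieces combine into precisely (\ref{important}) — with the prefactor $(-2\pi i)^{k/2}/\Gamma(k/2)$ exactly as stated — requires care. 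One clean way to organize this is to first prove the two-variable identity
\begin{align*}
\int_{p}^{q} f(w)(w-a/c)^{(k-2)/2}dw = \left[\sum_{0\le j\le (k-2)/2}(-1)^j\frac{(\tfrac{k-2}{2})!}{j!}(w-a/c)^{j}I_{k/2-j}(w)\right]_{w=p}^{w=q}
\end{align*}
as a formal consequence of $I_{n+1}'=I_n$ and the Leibniz rule, then specialize to $(p,q)=(\gamma\infty,\gamma z)$ and $(p,q)=(\gamma z,i\infty)$, apply the decay of $I_n$ at $i\infty$, use modularity on the first piece via the substitution $w\mapsto \gamma w$, and collect terms.
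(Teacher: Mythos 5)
Your proposal follows essentially the same route as the paper's proof: both start from (\ref{1}), treat the integral over $[\gamma z,i\infty)$ by writing $w-a/c=(w-\gamma z)-c^{-1}/j(\gamma,z)$, and treat the integral from $\gamma\infty$ to $\gamma z$ by the substitution $w\mapsto\gamma w$, modularity of $f$, and the identity $j(\gamma,w)/c=w-z+j(\gamma,z)/c$; the paper simply plugs in the closed form (\ref{7.1.1}) after a binomial expansion, which amounts to the same computation as your iterated integration by parts, and the boundary terms at $i\infty$ do vanish by the exponential decay of the $I_n$. One concrete correction: your displayed ``two-variable identity'' is off by a factor $(-1)^{(k-2)/2}$ --- differentiating its right-hand side gives $(-1)^{(k-2)/2}f(w)(w-a/c)^{(k-2)/2}$, e.g.\ for $k=4$ the antiderivative of $f(w)(w-a/c)$ is $(w-a/c)I_1(w)-I_2(w)$, not its negative --- and since this is exactly the kind of sign that produces the $(-1)^{k/2}$ and $(-1)^j$ in (\ref{important}), it needs to be fixed explicitly rather than left to bookkeeping.
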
 
\begin{proof}
We treat the two integrals in (\ref{1}) separately.

Using the fact that
$$ a/c=\gamma \infty =\gamma z+\frac{c^{-1}}{j(\gamma, z)}, $$
we get
\begin{align*}
&\int_{\gamma z}^{i\infty} f(w) (w-a/c)^{(k-2)/2} dw\\ 
&= \int_{\gamma z}^{i\infty} f(w) (w-\gamma z+ \gamma z-a/c)^{(k-2)/2} dw \\
&= \int_{\gamma z}^{i\infty} f(w) \left(w-\gamma z- \frac{c^{-1}}{j(\gamma, z)}\right)^{(k-2)/2} dw\\
&= (-1)^{k/2}\sum_{0\leq j \leq (k-2)/2}  \frac{((k-2)/2)!}{j!} c^{-j} j(\gamma,z)^{-j} I_{k/2-j}(\gamma z)
\end{align*}
using the integral representation (\ref{7.1.1}) of $I_j(z)$. To treat the other integral we use the identity 
$$w-a/c= \gamma\gamma^{-1} w-a/c= -\frac{c^{-1}}{j(\gamma,\gamma^{-1} w)},$$
which yields
\begin{align*} 
\int_{\gamma \infty}^{\gamma z} f(w) (w-a/c)^{(k-2)/2} dw &= \int_{\gamma \infty}^{\gamma z} f(\gamma\gamma^{-1} w) \left(-\frac{c^{-1}}{j(\gamma, \gamma^{-1}w)}\right)^{(k-2)/2} dw\\
&= \int_{i\infty}^{z} f(\gamma w) \left(-\frac{c^{-1}}{j(\gamma, w)}\right)^{(k-2)/2}j(\gamma, w)^{-2} dw
\end{align*}
after the change of variable $w\mapsto \gamma^{-1}w$. Now by using modularity of $f$ and the following identity
$$ \frac{j(\gamma,w)}{c}= w-z+\frac{j(\gamma,z)}{c}, $$
the above equals
\begin{align*}  
&(-1)^{(k-2)/2} \int_{i\infty}^z f(w)j(\gamma,w)^{k-2}\left(-\frac{c^{-1}}{j(\gamma,w)}\right)^{(k-2)/2}dw\\
=&(-1)^{(k-2)/2}  \int_{i\infty}^{z} f(w) \left( w-z+\frac{j(\gamma,z)}{c} \right)^{(k-2)/2} dw\\
= &\sum_{0\leq j\leq (k-2)/2} (-1)^j \frac{((k-2)/2)!}{j!}c^{-j}j(\gamma,z)^j I_{k/2-j}(z).
\end{align*}
\end{proof}
\begin{remark}
Lemma \ref{spectrallemma} is very closely related to the fact that the additive twists $L(f,\cdot, k/2)$ considered as a map on the cusps of $\Gamma$ define a {\it quantum modular form} in the sense of Zagier \cite{Zagier10}, which was a key input for Bettin and Drappeau \cite{BeDr19} in their dynamical proof of the normal distribution of additive twists (in the special case of level 1). In \cite{Nordentoft20} the author, inspired by this connection, proved quantum modularity for central values of additive twists of holomorphic cusp forms of general level and used this to prove a certain \lq reciprocity law{\rq} for multiplicative twists $L(f,\chi,1/2)$. The similarity between \cite[Corollary 8.3]{BeDr19},\cite[Theorem 4.4]{Nordentoft20} and Corollary \ref{spectrallemma} follows by setting $z=r\in \Q$ in (\ref{important}) and using that $I_n(r)=c_n L(f,r,n)$ for some simple constants $c_n$ only depending on $n$ (and $f$). The proofs of \cite[Theorem 4.4]{Nordentoft20} and Corollary \ref{spectrallemma} are also essentially the same. 
\end{remark}
\subsection{Analytic properties of Goldfeld Eisenstein series}
What we would like to do now is the following; take the formula in Lemma \ref{spectrallemma}, sum over $\gamma \in\Gamma_\infty\backslash\Gamma$, use the identity
\begin{align}\label{c} c= \frac{j(\gamma,z)-\overline{j(\gamma,z)}}{2iy} \end{align}
and then finally use the binomial formula to express $E^{m,n}(z,s)$ as a sum of the Poincaré series $G_{\n,\m,l}(z,s)$. The only slight complication is that when $k\geq 4$ we have negative powers of $c$ in our formula for the central values. In order to bypass this we multiply by a power $c^N$ on both sides of (\ref{important}) for some even $N\geq (k-2)/2$. With this in mind, we define the following {\it $N$-shifted Goldfeld Eisenstein series};
\begin{align}\label{N-shift}  E^{m,n}(z,s;N):=\sum_{\gamma\in\Gamma_\infty\backslash \Gamma}c^{N} L(f,\gamma \infty,k/2)^m \overline{L(f,\gamma \infty,k/2)}^n \Im(\gamma z)^s. \end{align}	  
By (\ref{prebound}) we know that the above series converges absolutely (and locally uniformly) for $\Re s\gg 1$. The series $ E^{m,n}(z,s;N)$ also has a very nice Fourier expansion at $\infty$ with constant term related to $D^{m,n}(f,s)$ as we will see below. 

We will now derive the analytic properties of $E^{m,n}(z,s;N)$ from the results of the preceding sections. This is a major step towards our main result. 
\begin{prop} \label{propeis}
Let $N\geq (n+m)(k-2)/2$ be an even integer. Then the Eisenstein series $E^{m,n}(z,s;N)$ admits meromorphic continuation to $\Re s>N/2+1/2$ satisfying the following; 
\begin{enumerate}[(i)]
\item $E^{m,n}(z,s;N)$ is regular for $\Re s> N/2+1$ and all poles in the strip 
$$ N/2+1/2<\Re s\leq   N/2+1$$ 
are contained in the set $\{p+N/2\mid p\in \mathcal{P}\}$.\\ 
\item The pole order of $E^{m,n}(z,s;N)$ at $s=N/2+1$ is bounded by $\min(m,n)+1$.\\
\item $E^{n,n}(z,s;N)$ has a pole at $s=N/2+1$ of order $n+1$ with leading Laurent coefficient
$$(4\pi)^{nk}y^{-N/2}\frac{\binom{N}{N/2}}{2^{N}}  \frac{(n!)^2|\!|f|\!|^{2n}}{((k-1)!)^n \vol(\Gamma)^{n+1}}. $$  
\end{enumerate}
\end{prop}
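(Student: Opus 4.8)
The plan is to establish an explicit ``automorphic completion'' identity expressing $E^{m,n}(z,s;N)$ as a finite linear combination of the Poincar\'e series $G_{A,B,l}(z,\cdot)$ of Section~\ref{Poincare}, and then to read off (i)--(iii) from the analytic properties of the latter proved in Propositions~\ref{analyticont} and~\ref{polebound}, Corollary~\ref{weightnot0} and Theorem~\ref{main}. To obtain the identity I would start from the formula of Lemma~\ref{spectrallemma} for $L(f,\gamma\infty,k/2)$, raise it to the $m$th power and its complex conjugate to the $n$th power, multiply by $c^N$, and expand the powers by the multinomial theorem. Each resulting summand is a constant multiple of $c^{N-J}$ times a product of factors of the shape $j(\gamma,z)^{-(k/2-a)}I_a(\gamma z)$, $j(\gamma,z)^{k/2-c}I_c(z)$ and their conjugates, where $0\le J\le (m+n)(k-2)/2$ is the total power of $c^{-1}$ produced; the hypothesis $N\ge (m+n)(k-2)/2$ ensures $N-J\ge 0$. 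One then rewrites the surviving $c^{N-J}$ through the identity~(\ref{c}), expands it by the binomial theorem, and uses $|j(\gamma,z)|^2=y/\Im(\gamma z)$ to separate the unimodular factor $j_\gamma(z)$, the powers of $\Im(\gamma z)$, and the powers of $y$. A bookkeeping computation shows that in every summand the resulting power of $\Im(\gamma z)$ is exactly $s-N/2+\alpha(A,B)$, where $A$ (resp.\ $B$) is the multiset of subscripts of the $I(\gamma z)$ (resp.\ $\overline{I(\gamma z)}$) factors; hence, after summing over $\gamma\in\Gamma_\infty\backslash\Gamma$ (legitimate in a common half-plane of absolute convergence, guaranteed by~(\ref{prebound}) and Lemma~\ref{regionofconv}, which also justifies the subsequent analytic continuation), each summand is recognised as a $G_{A,B,l}(z,s-N/2)$ up to a constant, a power of $y$, and a product of antiderivatives $I_c(z),\overline{I_d(z)}$ in the $z$-variable only. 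One arrives at
\begin{align*}
E^{m,n}(z,s;N)=\sum_{t}\kappa_t\,y^{e_t}\Bigl(\prod_{c\in C_t}I_c(z)\Bigr)\Bigl(\prod_{d\in D_t}\overline{I_d(z)}\Bigr)G_{A_t,B_t,l_t}(z,s-N/2),
\end{align*}
a finite sum in which $|A_t|\le m$, $|B_t|\le n$, the $l_t$ are even, and all of $\kappa_t,e_t,A_t,B_t,C_t,D_t,l_t$ are explicit.

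Parts (i) and (ii) are then immediate. By Proposition~\ref{analyticont} each $G_{A_t,B_t,l_t}(z,s-N/2)$ continues meromorphically to $\Re(s-N/2)>1/2$, is regular for $\Re(s-N/2)>1$, and has poles in $1/2<\Re(s-N/2)\le 1$ lying in $\mathcal P$ (the finitely many terms with $A_t=B_t=\emptyset$, if present, are weight-$l_t$ nonholomorphic Eisenstein series, which enjoy the same properties); shifting by $N/2$ and noting that the $z$-prefactors are holomorphic and $s$-independent gives (i). For (ii), Proposition~\ref{polebound} bounds the order of the pole of $G_{A_t,B_t,l_t}$ at $s-N/2=1$ by $\min(\#\{a\in A_t:a=k/2\},\#\{b\in B_t:b=k/2\})+1\le\min(|A_t|,|B_t|)+1\le\min(m,n)+1$, and again the prefactors do not affect the pole order.

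For (iii) I would isolate the terms capable of producing a pole of the maximal order $n+1$. Such a term needs $G_{A_t,B_t,l_t}$ to have a pole of order $n+1$ at $s-N/2=1$; by Proposition~\ref{polebound} this forces $\#\{a\in A_t:a=k/2\}=\#\{b\in B_t:b=k/2\}=n$, hence $A_t=B_t=\{k/2,\dots,k/2\}$ and $C_t=D_t=\emptyset$, and by Corollary~\ref{weightnot0} it forces $l_t=0$. Tracing this back through the expansions pins down a \emph{unique} such term: every one of the $2n$ factors must be the $j=0$ summand of the first sum in~(\ref{important}) (so $J=0$), and among the binomial terms of $c^N=(2iy)^{-N}(j(\gamma,z)-\overline{j(\gamma,z)})^N$ only the middle one, with exponent $N/2$, yields weight $l_t=0$ (here one uses that $N$ is even). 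Computing its constant with the help of $\Gamma(k/2)=((k-2)/2)!$, $(-2i)(2i)=4$ and $(-2\pi i)(2\pi i)=4\pi^2$ gives $\kappa_t=(4\pi)^{nk}\binom{N}{N/2}2^{-N}$ and $e_t=-N/2$, so this term equals $(4\pi)^{nk}\,2^{-N}\binom{N}{N/2}\,y^{-N/2}\,G_{\{k/2,\dots,k/2\},\{k/2,\dots,k/2\},0}(z,s-N/2)$. Since, by Theorem~\ref{main}, the latter Poincar\'e series has a pole of order $n+1$ at $s-N/2=1$ with leading Laurent coefficient $\frac{(n!)^2||f||^{2n}}{((k-1)!)^n\vol(\Gamma)^{n+1}}$, and since every other term contributes a pole of order at most $n$, the pole of $E^{n,n}(z,s;N)$ at $s=N/2+1$ has order exactly $n+1$ and the stated leading Laurent coefficient.

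The main obstacle is the bookkeeping in the first step: tracking the half-integer powers of $j_\gamma(z)$, $\Im(\gamma z)$ and $y$ produced by substituting~(\ref{c}) into $c^N$ and into every $c^{-j}$, verifying that the exponent of $\Im(\gamma z)$ collapses to the single value $s-N/2+\alpha(A,B)$ in all summands (so that each term genuinely becomes a $G$ evaluated at the same argument $s-N/2$), and getting the normalizing constants exactly right --- the $(\pm2i)^{a}$ in the definition of $G_{A,B,l}$, the $(-2\pi i)^{k/2}/\Gamma(k/2)$ prefactor of Lemma~\ref{spectrallemma}, and the central binomial coefficient --- since these propagate directly into the constant claimed in (iii).
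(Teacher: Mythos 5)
Your proposal is correct and reproduces the paper's own proof almost verbatim: you take the central-value formula of Lemma~\ref{spectrallemma}, raise to powers, multiply by $c^N$, rewrite $c^{N-J}$ via~(\ref{c}) and the binomial theorem, track the exponent of $\Im(\gamma z)$ down to the uniform value $s-N/2+\alpha(A,B)$ (exactly the bookkeeping the paper records in equation~(\ref{t+t'})), sum over $\Gamma_\infty\backslash\Gamma$ to land on a finite linear combination of $G_{A,B,l}(z,s-N/2)$, and then read off (i)--(iii) from Proposition~\ref{analyticont}, Proposition~\ref{polebound}, Corollary~\ref{weightnot0} and Theorem~\ref{main}. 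Your isolation of the unique term contributing a pole of maximal order (all $2n$ factors taking the $j=0$ summand, so $A=B=\{k/2,\dots,k/2\}$, together with the middle binomial term $v=N/2$) and your normalising-constant computation match the paper exactly, and you are even a little more explicit than the paper about why $C_t=D_t=\emptyset$ is forced by $|A_t|+|C_t|=m$.
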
 
\begin{proof}
By (\ref{important}) we can write 
$$ c^{N}L(f,\gamma \infty,k/2)^m \overline{L(f,\gamma \infty,k/2)}^n  $$
as a linear combination of terms of the type
$$h(z)  j(\gamma, z)^{t} \overline{j(\gamma, z)}^{t'} c^{N'} I_{k/2-j_1}(\gamma z) \cdots I_{k/2-j_{m'}}(\gamma z) \overline{I_{k/2-j_{m'+1}}(\gamma z)}\cdots \overline{I_{k/2-j_{m'+n'}}(\gamma z)}    $$
where $h:\H\rightarrow \C$ is a smooth function (this will be a product of $I_j(z)$ for $1\leq j\leq k/2$), $t,t'$ are integers, $m'\leq m$, $n'\leq n$ are non-negative integers and finally $N'$ is a {\it non-negative} integer. By inspecting (\ref{important}), we see that $t$, $t'$ and $N'$ satisfy 
$$t+t'+N'= N-2\sum_{v=1}^{m'+n'}j_v.$$
Now we use (\ref{c}) and expand using the binomial formula (here it is essential that $N'\geq 0$) to get terms of the type
$$ h(z) j(\gamma, z)^{t} \overline{ j(\gamma, z)}^{t'}I_{k/2-j_1}(\gamma z) \cdots I_{k/2-j_{m'}}(\gamma z) \overline{I_{k/2-j_{m'+1}}(\gamma z)}\cdots \overline{I_{k/2-j_{m'+n'}}(\gamma z)}  $$
where now 
\begin{align}\label{t+t'}t+t'= N-2\sum_{v=1}^{m'+n'}j_v.\end{align}
Now we multiply by $\Im(\gamma z)^s$ and use the identity
$$  j(\gamma, z)^{t} \overline{ j(\gamma, z)}^{t'}\Im(\gamma z)^s =y^{(t+t')/2}j_\gamma(z)^{t-t'} \Im (\gamma z)^{s-(t+t')/2}. $$
Thus summing over $\gamma\in \Gamma_\infty \backslash\Gamma$, we can express $E^{m,n}(z,s;N)$ (for $\Re s$ large enough) as a linear combination of terms of the type 
\begin{align}\label{sumbound}h(z) G_{\n,\m,l}(z,s-N/2) \end{align}
where the $h$'s are smooth functions (more precisely; products of powers of $y$'s and $I_j(z)$'s), $|\n|\leq m,|\m|\leq n$ and $l$ is even (which follows from (\ref{t+t'})). Notice that (\ref{t+t'}) fits beautifully with the the factor $\alpha(\n,\m)$ in the definition of $G_{\n,\m,l}(z,s)$, which is why we get the argument $s-N/2$ for all terms. 

Now it follows directly from Proposition \ref{analyticont} that $E^{m,n}(z,s;N)$ has meromorphic continuation to $\Re s>N/2+1/2$ satisfying property $(i)$ of Proposition \ref{propeis}. Furthermore by Proposition \ref{polebound} it follows that the Poincaré series $G_{\n,\m,l}(z,s-N/2)$ has a pole of order at most $\min (m,n)+1$ at $s=N/2+1$. Thus the same is true for $E^{m,n}(z,s;N)$. 

Now finally let us consider the diagonal case $m=n$. We see by Corollary \ref{weightnot0} and Proposition \ref{polebound}, that all terms (\ref{sumbound}) have a pole of order at most $n$, except the one with 
$$\n=\m=\{\underbrace{k/2,\ldots, k/2}_n\}$$
and $l=0$. Now let us calculate the coefficient of $G_{\n,\n,0}(z,s-N/2)$ in the expansion of $E^{n,n}(z,s;N)$; we have
$$  (2\pi )^{nk}c^{N} |I_{k/2}(\gamma z)|^{2n}=\frac{(2\pi )^{nk}}{(2y)^{N}}  |I_{k/2}(\gamma z)|^{2n}  \sum_{v=1}^{N}(-1)^v\binom{N}{v} j(\gamma,z)^v  \overline{j(\gamma,z)}^{N-v}.   $$ 
Now we multiply by $\Im (\gamma z)^s$ and sum over $\gamma \in \Gamma_\infty \backslash \Gamma$. By the pole bound from Corollary \ref{weightnot0}, we see that only the term with $v=N/2$ above can contribute with a pole of order $n+1$ at $s=N/2+1$. Thus we can write
\begin{align*} E^{n,n}(z,s;N)=&(4\pi)^{nk}y^{-N/2}\frac{\binom{N}{N/2}}{2^{N}}G_{\n,\n,0}(z,s-N/2)\\
&+(\text{\it terms with a pole of order at most $n$ at }s=N/2+1),\end{align*}
where
$$\n=\{\underbrace{k/2,\ldots, k/2}_{n}\}.$$
(The extra factor of $2^{nk}$ comes from $2i$ and $-2i$ in the denominator in the definition of $G_{\n,\n,0}(z,s)$). Now the result follows directly from Theorem \ref{main}.   
\end{proof}

\subsection{Analytic properties of $D^{m,n}(f,s)$} 
Using the above we can now extract analytic information about $D^{m,n}(f,s)$ using that it is essentially the constant term in the Fourier expansion of $E^{m,n}(z,s;N)$ at $\infty$.
\begin{lemma}\label{fourier}
Let $N\geq 0$ be an even integer. Then the constant term in the Fourier expansion of $E^{m,n}(z, s;N)$ (at $\infty$) is equal to
$$ \frac{\pi^{1/2}y^{1-s}\Gamma(s-1/2)}{\Gamma(s)}D^{m,n}(f, s-N/2). $$
\end{lemma}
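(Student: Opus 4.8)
The plan is a standard unfolding computation, organized around the coset description of the set over which the series $E^{m,n}(z,s;N)$ of (\ref{N-shift}) is summed. First I would reorganize this series. The coset space $\Gamma_\infty\backslash\Gamma$ decomposes as the coset of the identity together with, for each $r\in T_{\infty\infty}$, a single right $\Gamma_\infty$-orbit, freely parametrized by $\ell\in\Z$ through the cosets $\Gamma_\infty\gamma_r\begin{psmallmatrix}1&\ell\\0&1\end{psmallmatrix}$, where $\gamma_r=\begin{psmallmatrix}a&b\\c(r)&d\end{psmallmatrix}\in\Gamma$ is a fixed representative with $\gamma_r\infty=r$; the orbit is free because conjugating a nontrivial element of $\Gamma_\infty$ by $\gamma_r$ produces a parabolic element fixing $r\neq\infty$, which cannot lie in $\Gamma_\infty$. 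On such an orbit the lower-left entry $c_\gamma$ and the value $L(f,\gamma\infty,k/2)$ are constant, equal to $c(r)$ and $L(f,r,k/2)$ (these descend to the double coset by \cite[Proposition~2.2]{PeRi}), while $\Im(\gamma z)$ becomes $\Im(\gamma_r(z+\ell))$. The coset of the identity contributes a term supported at $c_\gamma=0$, which vanishes since $L(f,\infty,k/2)=0$ (or since $c_\gamma^{N}=0$ when $N\ge 1$). Thus, for $\Re s$ large enough that everything converges absolutely — which holds by the a priori polynomial bound on $L(f,r,k/2)$ recalled in Section~\ref{convexity} — one gets
\[
E^{m,n}(z,s;N)=\sum_{r\in T_{\infty\infty}}c(r)^{N}\,L(f,r,k/2)^{m}\,\overline{L(f,r,k/2)}^{n}\sum_{\ell\in\Z}\Im\big(\gamma_r(z+\ell)\big)^{s}.
\]

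Second, I would compute the constant Fourier coefficient $\int_0^1 E^{m,n}(x+iy,s;N)\,dx$. Interchanging the absolutely convergent $r$- and $\ell$-sums with the integral and using the elementary unfolding $\sum_{\ell\in\Z}\int_0^1 g(x+\ell)\,dx=\int_{\R}g(x)\,dx$, the $\ell$-sum against $\int_0^1 dx$ collapses each term to $\int_{\R}\Im(\gamma_r(x+iy))^{s}\,dx$. Writing $c=c(r)$ and $d$ for the lower-right entry of $\gamma_r$, we have $\Im(\gamma_r(x+iy))=y\,|c(x+iy)+d|^{-2}$, so this integral is $y^{s}\int_{\R}\big((cx+d)^{2}+c^{2}y^{2}\big)^{-s}\,dx$; the substitutions $t=cx+d$ and then $u=t/(cy)$ reduce it to the classical integral $\int_{\R}(1+u^{2})^{-s}\,du=\pi^{1/2}\Gamma(s-1/2)/\Gamma(s)$ (convergent for $\Re s>1/2$), giving
\[
\int_{\R}\Im(\gamma_r(x+iy))^{s}\,dx=\frac{\pi^{1/2}\,\Gamma(s-1/2)}{\Gamma(s)}\,y^{1-s}\,c(r)^{-2s}.
\]
Summing over $r$ and recalling $c(r)^{2s-N}=c(r)^{2(s-N/2)}$, the constant term equals
\[
\frac{\pi^{1/2}\,y^{1-s}\,\Gamma(s-1/2)}{\Gamma(s)}\sum_{r\in T_{\infty\infty}}\frac{L(f,r,k/2)^{m}\,\overline{L(f,r,k/2)}^{n}}{c(r)^{2(s-N/2)}}=\frac{\pi^{1/2}\,y^{1-s}\,\Gamma(s-1/2)}{\Gamma(s)}\,D^{m,n}(f,s-N/2)
\]
by the definition (\ref{lfunc2}) of $D^{m,n}(f,s)$. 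This establishes the asserted identity for $\Re s$ large, and it then propagates by the meromorphic continuation of the left-hand side from Proposition~\ref{propeis} (from which the continuation of $D^{m,n}(f,s)$ is in fact read off).

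I expect the only genuinely delicate points to be: (i) the bookkeeping in the first step, i.e.\ verifying that the sum over $\Gamma_\infty\backslash\Gamma$ really regroups as a sum over $T_{\infty\infty}$ of complete $\Gamma_\infty$-orbits on which the arithmetic data $c_\gamma$ and $L(f,\gamma\infty,k/2)$ are constant; and (ii) the justification of interchanging summation and integration, which rests squarely on the a priori bound $L(f,r,k/2)\ll c(r)^{K}$ from Section~\ref{convexity}. Everything else — the unfolding of the $\ell$-sum and the beta integral — is routine.
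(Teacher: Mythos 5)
Your proof is correct and follows essentially the same route as the paper: you decompose $\Gamma_\infty\backslash\Gamma$ via the double coset structure into orbits on which $c_\gamma$ and $L(f,\gamma\infty,k/2)$ are constant (this is exactly the paper's use of the double coset bijection and of $T_{\infty\infty}$), and then evaluate the constant Fourier coefficient of the resulting inner sum. The only difference is presentational: the paper cites \cite[Section 3.4]{Iw} (Poisson summation) for the final integral, whereas you unfold the $\ell$-sum against $\int_0^1 dx$ and compute the beta integral directly, which is the zeroth Fourier mode of that same computation.
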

\begin{proof}
By the double coset decomposition (see \cite[Theorem 2.7]{Iw}), we have
$$\Gamma_\infty\backslash \Gamma / \Gamma_\infty \leftrightarrow \left\{ (c,d)\mid 0\leq d<c, \begin{pmatrix}\ast & \ast \\ c & d \end{pmatrix}\in\Gamma \right\}\cup \{(0,1)\}$$ 
Now since $L(f,\gamma \infty,k/2)$ is well-defined in the above double coset and $L(f,\infty,k/2)=0$ per definition, we can write 
 \begin{align*}&E^{m,n}(z, s;N)\\
 &= \sum_{c>0}\sum_{\substack{0\leq d <c }} c^{N}L(f,\gamma_{c,d} \infty,k/2)^m \overline{L(f,\gamma_{c,d} \infty,k/2)}^n\sum_{l\in\Z} \frac{y^s}{|c(z+l)+d|^{2s}}, \end{align*}
where $\gamma_{c,d}$ is any representative of $(c,d)$ in $\Gamma_\infty\backslash \Gamma / \Gamma_\infty$. Now the result follows by computing the inner sum using Poisson summation as in \cite[Section 3.4]{Iw}.  
\end{proof}
With this lemma at our disposal, we can easily derive the analytic properties of $D^{m,n}(f,s)$ from the results already established.

\begin{thm}\label{final} Let $m,n\geq 0$ be non-negative integers. Then the Dirichlet series $D^{m,n}(f,s)$ admits meromorphic continuation to $\Re s>1/2$ satisfying the following. 
\begin{enumerate}[(i)]
\item $D^{m,n}(f,s)$ is regular for $\Re s> 1$ and the poles in the strip 
$$ 1/2< \Re s\leq 1$$ 
are contained in the singular set $\mathcal{P}$.\\ 
\item The pole order of $D^{m,n}(f,s)$ at $s=1$ is bounded by $\min (m,n)+1$.\\
\item $D^{n,n}(f,s)$ has a pole of order $n+1$ at $s=1$ with leading Laurent coefficient  
\begin{align}\label{leadingcoef} \frac{(n!)^2(4\pi)^{nk}|\!|f|\!|^{2n}}{\pi((k-1)!)^n \vol(\Gamma)^{n+1}}.\end{align}
\item For $s=\sigma+it$ with $1/2+\eps \leq \sigma \leq 2$ and $\dist(\lambda(s), \mathcal{P})\geq \eps$, we have the following bound 
\begin{equation}\label{vertbound}D^{m,n}(f,s)\ll_\eps   (1+|t|)^{1/2}, \end{equation}
where the implied constant may depend on $m,n$.
\end{enumerate}
\end{thm} 
\begin{proof}
Fix some even integer $N\geq (n+m)(k-2)/2$. By Lemma \ref{fourier} we have
$$D^{m,n}(f,s-N/2)= \frac{\Gamma(s)}{\pi^{1/2}y^{1-s}\Gamma(s-1/2)} \int_0^1 E^{m,n}(z,s;N) dx.$$
Thus we conclude directly from Proposition \ref{propeis} the following properties; meromorphic continuation of $D^{m,n}(f,s)$, the claim about the position of the possible poles and the bound on the order of the pole at $s=1$. 

Now to treat the case $m=n$, we recall that the leading Laurent coefficient of $E^{n,n}(z,s;N)$ at $s=N/2+1$ is constant. Thus we see directly from Proposition \ref{propeis} that $D^{n,n}(f,s)$ has a pole of order $n+1$ at $s=1$ with leading Laurent coefficient
 $$\frac{\Gamma(N/2+1)}{\pi^{1/2}y^{1-(N/2+1)}\Gamma(N/2+1/2)} (4\pi)^{nk}y^{-N/2}\frac{\binom{N}{N/2}}{2^{N}}  \frac{n!^2|\!|f|\!|^{2n}}{((k-1)!)^n \vol(\Gamma)^{n+1}}.$$
Using that for even $N$, we have
\begin{align*}  \Gamma(N/2+1/2)= \frac{\pi^{1/2}(N-1)\cdots 3\cdot 1}{2^{N/2}} ,\quad \Gamma(N/2+1)=(N/2)!,\\
\frac{\binom{N}{N/2}}{2^{N}}=\frac{N\cdot(N-1)\cdots 1}{2^{N/2} (N/2)! \cdot 2N\cdot 2(N-1)\cdots 2}= \frac{(N-1)\cdots 3\cdot 1}{2^{N/2} (N/2)!},  
\end{align*}
the claim about the leading Laurent coefficient follows.

To get the claim about growth on vertical lines, we need to somehow bring the bounds on the $L^2$-norms of $G_{\n,\m,l}(z,s)$ from Proposition \ref{Growth} into play. 

First step is to integrate $D^{m,n}(f, s-N/2)$ with respect to $y$ over some finite segment, say $[1,2]$, which gives
$$D^{m,n}(f, s-N/2)=  \frac{\Gamma(s)}{\pi^{1/2}\Gamma(s-1/2)} \int_1^2 \int_0^1 y^{s-1}E^{m,n}(z,s;N) dx dy.  $$
By the proof of Proposition \ref{propeis} we can write the above as a linear combination of terms of the type 
$$   \frac{\Gamma(s)}{\Gamma(s-1/2)}  \int_1^2 \int_0^1 h(z) G_{\n,\m,l}(z,s-N/2) dx dy,  $$
with $h(z)$ some smooth function. Since $h(z)$ is bounded in the region $[0,1]\times [1,2]$, the Cauchy-Schwarz inequality implies the following;
\begin{align*}&\int_1^2 \int_0^1y^{1-s} h(z) G_{\n,\m,l}(z,s-N/2) dx dy\\
&\ll  \left(\left(\int_1^2 \int_0^1 y^{4-2\sigma}|h(z)|^2\, dx dy\right)\cdot\left( \int_1^2 \int_0^1 |G_{\n,\m,l}(z,s-N/2)|^2 \frac{dx dy}{y^2}\right)\right)^{1/2}\\
&\ll_h |\!|G_{\n,\m,l}(z,s-N/2)|\!|.    \end{align*}
Thus for $s=\sigma+it$ with 
$$1/2+N/2+\eps \leq \sigma < 3/2+N/2$$ 
and $s-N/2$ bounded at least $\eps$ away from $\mathcal{P}$, we get by Proposition \ref{Growth} that 
$$D^{m,n}(f, s-N/2) \ll_\eps   \frac{|\Gamma(s)|}{|\Gamma(s-1/2)|},$$
where the implied constant may depend on $m,n$ and $f$. 

Now Stirling's formula implies for $s$ in the given range;
$$\frac{\Gamma(s)}{\Gamma(s-1/2)} \ll_{N} (1+|t|)^{1/2},$$
and thus
$$D^{m,n}(f, s)\ll_\eps (1+|t|)^{1/2}$$
for $s=\sigma+it$ with $1/2+\eps \leq \sigma<1$ and $s$ being $\eps$-bounded away from $\mathcal{P}$.  
\end{proof}
\begin{remark}\label{lindelof}
Using Landau's Lemma \cite[Lemma 5.56]{IwKo}, Theorem \ref{final} gives directly the Lindel\"of type bound $  L(f,r, k/2)\ll_\eps c(r)^{\eps} $ in the $c$-aspect. This can however also be proved in a straightforward manner using \cite[Theorem 5.3]{Iw2} and the functional equation for additive twists. 
\end{remark}
\begin{remark}\label{interpolatedvertb}
Clearly, for $\Re s\geq 1+\eps$, we have by absolute convergence the bound $D^{m,n}(f, s)\ll_\eps 1 $, which by the Phragm\'en--Lindel\"of principle implies the improved bound $D^{m,n}(f, \sigma+it)\ll_\eps (1+|t|)^{1-\sigma+\eps} $ for $1/2+\eps \leq \Re s\leq 1+\eps$. 
\end{remark}

\subsection{Normal distribution} 
In this section we will show that the central values 
$$L(f,r,k/2), \quad r\in T=T_{\infty\infty}$$ 
with a suitable normalization are normally distributed when ordered by the size of $c(r)$. This is done by determining all asymptotic moments and then appealing to a 
classical result of Fréchet and Shohat \cite[Theorem B on p. 17]{Serf} known as the {\it methods of moments}.

To evaluate the asymptotic moments, we firstly apply a contour integration argument to the Dirichlet series $D^{m,n}(f,s)$. This allows us to prove the following theorem.
\begin{thm} \label{normal2}
Let $f\in S_k(\Gamma)$ be a cusp form of even weight $k$ and let $m,n$ be non-negative integers. Then we have
\begin{align}  \label{bound}\sum_{\substack{r\in T(X)}} L(f,r,k/2)^m\overline{L(f,r,k/2)}^n \ll X^2\log (X)^{\min(m,n)}.  \end{align}
Let $n$ be a non-negative integer. Then we have 
\begin{align}\label{momentergeneral}\sum_{\substack{r\in T(X)}} |L(f,r,k/2)|^{2n} =P_n(\log X)X^2+O_\eps(X^{\max( 4/3, 2 s_1)+\eps}),  \end{align}
where $s_1\in \mathcal{P}$ corresponds to the smallest positive eigenvalue of $\Delta$ as in (\ref{P}) (here $s_1=1/2$ if $\mathcal{P}=\{1\}$) and $P_n$ is a polynomial of degree $n$ with leading coefficient
\begin{equation}\label{leadingcoef}\frac{2^nn!}{\pi \, \vol(\Gamma)}(C_{f})^n,\end{equation}
with $C_f$ as in (\ref{Cf2}).
\end{thm}




\begin{proof} The bound (\ref{bound}) follows directly from Theorem \ref{final} using Perron's formula as in \cite[Lemma 3.12]{Ti} and a standard contour integration argument.

Similarly, for $m=n$ we apply Perron's formula to $D^{n,n}(f,s)$ as in \cite[Lemma 3.12]{Ti} and shift the contour to the line $\Re s=1/2+\eps$, bounding the contribution from the vertical segments using the bound coming from Remark \ref{interpolatedvertb}. This way we pick up the residues of $D^{n,n}(f,s)X^s/s$ at all the poles of $D^{n,n}(f,s)$ in the strip $1/2+\eps \leq \Re s\leq 1$ (which we know are contained in $\mathcal{P}=\{s_0=1, s_1, \ldots,  s_m\}$) and arrive at
\begin{align*}
&\sum_{r\in T(\sqrt{X})} |L(f,r,k/2)|^{2n}\\
&= \sum_{w\in \mathcal{P}}  \mathrm{res}_{s=w}D^{n,n}(f,s)\frac{X^s}{s}+ \int_{1/2+\eps-iT}^{1/2+\eps+iT} D^{n,n}(f,s)\frac{X^s}{s}ds+O_\eps( X^{1+\eps}T^{-1}),
\end{align*}
for $T>0$ to be chosen appropriately and $X\in 1/2+\Z$ (using also that $L(f,r,k/2)\ll_\eps c(r)^\eps$ coming from Remark \ref{lindelof}). We know from Theorem \ref{final} that the pole of $D^{n,n}(f,s)$ at $s=1$ is of order $n+1$ with leading Laurent coefficient (\ref{leadingcoef}) which implies that 
\begin{align*}
&\mathrm{res}_{s=1}D^{n,n}(f,s)X^s/s\\
&=  \frac{(n!)^2(4\pi)^{nk}|\!|f|\!|^{2n}}{\pi((k-1)!)^n \vol(\Gamma)^{n+1}}\frac{(\log X)^n X}{n!}+a_{n-1}(\log X)^{n-1} X+\ldots +a_0 X , \end{align*}
for certain constants $a_0,\ldots, a_{n-1}$ (depending on $n$ and $f$). Using the bound (\ref{vertbound}) (and doing the substitution $X\mapsto X^2$), we arrive at
$$\sum_{r\in T(X)} |L(f,r,k/2)|^{2n}= P_n(\log X)X^2+ O_\eps(X^{2s_1+\eps}+ X^{2+\eps}T^{-1}+X^{1+\eps}T^{1/2}),$$
with $P_n$ a polynomial of degree $n$ and leading coefficient given by (\ref{leadingcoef}). Choosing $T=X^{2/3}$ we get the wanted.
\end{proof}
\begin{remark}
If we instead considered smooth moments, we would get the improved error-term $O_\eps (X^{2s_1+\eps})$.
\end{remark}
\begin{remark}One can check that in the weight 2 case, the main term agrees with Petridis and Risager \cite[Corollary 7.7]{PeRi}. \end{remark}
From the above we can deduce the asymptotic moments of $L(f,\cdot ,k/2)$ by partial summation.  

\begin{cor} \label{asymmoments}
Let $m\neq n$ be non-negative integers. Then we have 

\begin{equation}\frac{\sum_{\substack{r\in T(X)}}  \left(\frac{L(f,r, k/2)}{(C_f \log c(r))^{1/2}}\right)^m\left(\frac{\overline{L(f,r,k/2)}}{(C_f \log c(r))^{1/2}}\right)^n}{\# T(X)} \rightarrow 0, \end{equation}
as $X\rightarrow \infty$.

Let $n\geq 0$ be a non-negative integer. Then we have
 \begin{equation}\frac{\sum_{\substack{r\in T(X)}}  \left|\frac{L(f,r, k/2)}{(C_f \log c(r))^{1/2}}\right|^{2n}}{\# T(X)} \rightarrow 2^n n!, \end{equation}
 as $X\rightarrow \infty$.
\end{cor}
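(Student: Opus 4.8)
The plan is to deduce both limits from the unweighted moment asymptotics of Theorem \ref{normal2} by partial summation, the only subtlety being the removal of the arithmetic weight $(C_f\log c(r))^{-1}$ attached to each term. Write $L_r:=L(f,r,k/2)$. First I would record the normalisation: taking $n=0$ in (\ref{momentergeneral}), where $P_0$ reduces to the constant $\tfrac1{\pi\vol(\Gamma)}$, gives $\#T(X)=\tfrac1{\pi\vol(\Gamma)}X^2+O_\eps(X^{4/3+\eps})$. Throughout, the finitely many $r$ with $c(r)$ below any fixed bound contribute $O(1)$ to every sum below and may be discarded, and for the remaining $r$ one has $\log c(r)\gg 1$.

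For the diagonal limit, set $S(t):=\sum_{r\in T(t)}|L_r|^{2n}$, so that Theorem \ref{normal2} gives $S(t)=P_n(\log t)\,t^2+O_\eps(t^{\theta+\eps})$ with $\theta:=\max(4/3,2\Re s_1)<2$ and $P_n$ of degree $n$ with leading coefficient $\tfrac{2^n n!}{\pi\vol(\Gamma)}C_f^{\,n}$. Abel (Riemann--Stieltjes) summation against $g(t)=(\log t)^{-n}$ then gives
\begin{equation*}
\sum_{r\in T(X)}\frac{|L_r|^{2n}}{(\log c(r))^{n}}=\frac{S(X)}{(\log X)^{n}}+n\int_{2}^{X}\frac{S(t)}{t\,(\log t)^{n+1}}\,dt .
\end{equation*}
Inserting the asymptotics for $S$: the boundary term equals $\tfrac{2^n n!}{\pi\vol(\Gamma)}C_f^{\,n}X^2+O(X^2/\log X)$ (the lower-order coefficients of $P_n$ and the error $X^{\theta+\eps}/(\log X)^{n}$ being $o(X^2)$ precisely because $\theta<2$), while the integral is $\ll\int_{2}^{X}\tfrac{t}{\log t}\,dt+X^{\theta+\eps}\ll X^2/\log X$. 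Dividing by $C_f^{\,n}$ and then by $\#T(X)$, the factors $C_f^{\,n}$ and $\tfrac1{\pi\vol(\Gamma)}$ cancel and the quotient tends to $2^n n!$.

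For the off-diagonal limit ($m\neq n$), I would instead put $V(t):=\sum_{r\in T(t)}L_r^{m}\overline{L_r}^{\,n}$, for which (\ref{bound}) gives $|V(t)|\ll t^2(\log t)^{\min(m,n)}$. Abel summation against $(\log c(r))^{-(m+n)/2}$, together with the elementary bound $\int_{2}^{X}t(\log t)^{-c}\,dt\ll X^2(\log X)^{-c}$ for $c>0$ and the observation $\min(m,n)-\tfrac{m+n}{2}=-\tfrac{|m-n|}{2}\le-\tfrac12$, yields
\begin{equation*}
\Bigl|\sum_{r\in T(X)}\frac{L_r^{m}\overline{L_r}^{\,n}}{(\log c(r))^{(m+n)/2}}\Bigr|\ll X^2(\log X)^{-|m-n|/2}\ll X^2(\log X)^{-1/2}.
\end{equation*}
Dividing by $C_f^{(m+n)/2}$ and by $\#T(X)\asymp X^2$ shows the normalised off-diagonal moment is $\ll(\log X)^{-1/2}\to0$.

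I do not expect a genuine obstacle here: this step is essentially bookkeeping built on Theorem \ref{normal2}. The points needing a little care are keeping track of the multiplicative constants so that they cancel exactly against $\#T(X)$; verifying that $\theta=\max(4/3,2\Re s_1)<2$ (ensured, e.g., by the Kim--Sarnak bound $\Re s_1\le 39/64$) so that the error terms are genuinely $o(X^2)$; and checking that $c(r)$ is bounded away from $1$ on $T_{\infty\infty}$ so that the weights $(\log c(r))^{-1}$ make sense and the partial summation is legitimate (for $\Gamma_0(q)$ one has $c(r)\ge 2$, and the general case is handled by discarding the $O(1)$ contribution of the finitely many bounded $c(r)$).
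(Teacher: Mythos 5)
Your argument is correct and is essentially the paper's own proof: the paper deduces the corollary by partial summation from the moment asymptotics of Theorem \ref{normal2} together with $\#T(X)\sim X^2/(\pi\,\vol(\Gamma))$, which you recover (equivalently) from the $n=0$ case of (\ref{momentergeneral}) rather than citing \cite[Lemma 3.5]{PeRi}. Your write-up just makes the Abel-summation bookkeeping explicit, including the off-diagonal decay $(\log X)^{-|m-n|/2}$ coming from (\ref{bound}), and correctly notes that $\max(4/3,2\Re s_1)<2$ so all error terms are $o(X^2)$.
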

\begin{proof}
The corollary follows immediately from partial summation using Theorem \ref{asymmoments} and the asymptotic formula $ \# T(X)\sim X^2/(\pi \, \vol (\Gamma))$ coming from \cite[Lemma 3.5]{PeRi}. \end{proof}

Recall that the coordinates $\begin{psmallmatrix}Y \\ Z\end{psmallmatrix}$ of a standard complex normal distribution (or equivalently a standard 2-dimensional normal distribution with diagonal variance-matrix) has moments given by
$$ E(Y^mZ^n)= \begin{cases} (m-1)!!(n-1)!! & \text{if $m$ and $n$ are even}\\ 0& \text{otherwise}  \end{cases}, $$
where $(n-1)!!=(n-1)\cdot (n-3)\cdots 1$. By taking linear combinations of the moments in Corollary \ref{asymmoments}, it follows that the asymptotic moments of
$$\begin{pmatrix}\Re  \frac{L(f,r,k/2)}{(C_f \log c(r))^{1/2}}\\ \Im \frac{L(f,r,k/2)}{(C_f \log c(r))^{1/2}}\end{pmatrix},\quad r\in T(X)$$
as $X\rightarrow \infty$ are the same as those of the 2-dimensional standard normal. This fact and the above corollary allow us to prove Theorem \ref{mainthm}.
\begin{proof}[Proof of Theorem \ref{mainthm}]
We would like to use the result of Fréchet and Shohat coming from probability theory \cite[p. 17]{Serf} mentioned before. To make it fit into the probability theoretical framework of the Fréchet--Shohat Theorem, we consider for each $X>0$ the 2-dimensional random variable  
$$\begin{pmatrix} Y_X(r) \\ Z_X(r)\end{pmatrix}=\begin{pmatrix}\Re  \frac{L(f,r,k/2)}{(C_f \log c(r))^{1/2}}\\ \Im \frac{L(f,r,k/2)}{(C_f \log c(r))^{1/2}}\end{pmatrix},\quad r\in T(X)$$
where the outcome space $T(X)$ is endowed with the discrete $\sigma$-algebra and the uniform probability measure. Note that the Fréchet--Shohat Theorem, as stated in \cite[p. 17]{Serf}, is only directly applicable for 1-dimensional distribution functions, but we can get around this by using the Cramér--Wold Theorem \cite[p. 18]{Serf}, which says that it is enough to check that the moments of all linear combinations of the coordinates (marginal distributions) converge to the expected. To be precise; it follows from Corollary \ref{asymmoments} that for $(a,b)\in \R^2\backslash (0,0)$ the moments of the random variables $aY_X+bZ_X$ converges to the moments of a normal distribution with mean 0 and variance $a^2+b^2$ as $X\rightarrow \infty$. Thus it follows from the Fréchet--Shohat Theorem that the random variables $aY_X+bZ_X$ converges in distribution to the normal distribution with mean 0 and variance $a^2+b^2$ as $X\rightarrow \infty$ (the normal distribution is uniquely determined by its moments).

Now by the Cramér--Wold Theorem it follows that $\begin{psmallmatrix} Y_X \\ Z_X\end{psmallmatrix}$ converges in distribution to the 2-dimensional standard normal distribution as $X\rightarrow \infty$.
\end{proof}

\subsection{Additive twists at a general cusp} \label{difcusp}
We will now explain how to deal with additive twists associated to general cusps $\mathfrak{b}$ and in particular how to prove Theorem \ref{normal}. In the case of weight 2, the modular symbol $\langle \gamma, f\rangle $ is well-defined for $\gamma \in \Gamma_\infty \backslash \Gamma / \Gamma_\mathfrak{b}$ with $\mathfrak{b}$ any cusp of $\Gamma$, which implies that you get a nice Fourier expansion of $E^{m,n}(z,s)$ at every cusp. This is however {\it not} true for additive twists $L(f,\gamma \infty,k/2)$ of $L$-functions of cusp forms $f$ of weight $k\geq 4$. Thus in order to access the cusp $\mathfrak{b}$, we need to consider {\it the generalized Goldfeld series at $\mathfrak{b}$};
\begin{align}\label{generaleisenstein}  E^{m,n,\mathfrak{b}}(z,s):=\sum_{\gamma\in \Gamma_\infty \backslash \Gamma} L(f,\gamma \mathfrak{b},1/2)^m\overline{L(f,\gamma \mathfrak{b},1/2)}^n(\Im \gamma z)^s. \end{align}
The constant term of the Fourier expansion of $E^{m,n,\mathfrak{b}}(z,s)$ at $\mathfrak{b}$ is exactly given by
$$   \frac{\pi^{1/2}y^{1-s}\Gamma(s-1/2)}{\Gamma(s)}D_{\mathfrak{b}}^{m,n}(f, s) $$ 
with $D_{\mathfrak{b}}^{m,n}(f, s)$ defined as in (\ref{lfunc2}). 

Now by a slight modification of Lemma \ref{spectrallemma}, we conclude that for $r\in T_{\infty\mathfrak{b}}$, we have 
\begin{align}
\nonumber &L(f,r,k/2)= \Biggr(\sum_{0\leq j \leq (k-2)/2}  \frac{(\frac{k-2}{2})!}{j!} \left(\frac{z-\mathfrak{b}}{j(\gamma, \mathfrak{b})j(\gamma, z)}\right)^j (-1)^{k/2-j}I_{k/2-j}(\gamma z) \\
\label{speclemma2}&+ \int_\mathfrak{b}^z f(w)\left( \frac{(w-\mathfrak{b})(j(\gamma,z)(w-\overline{z})-\overline{j(\gamma,z)}(w-z))}{2iy j(\gamma, \mathfrak{b})} \right)^{k/2-1} dw\Biggr)\frac{(-2\pi i)^{k/2}} {\Gamma(k/2)}.
\end{align}
Observe that  $j(\gamma, \mathfrak{b})$ is well-defined for $\gamma\in \Gamma_\infty\backslash \Gamma / \Gamma_\mathfrak{b}$. Thus we can define the {\it Goldfeld Eisenstein series at $\mathfrak{b}$};
$$ E^{m,n,\mathfrak{b}}(z,s;N):=\sum_{\gamma\in \Gamma_\infty \backslash \Gamma} j(\gamma, \mathfrak{b})^N L(f,\gamma \mathfrak{b},1/2)^m\overline{L(f,\gamma \mathfrak{b},1/2)}^n(\Im \gamma z)^s, $$ 
whose Fourier expansion at $\mathfrak{b}$ has constant term equal to;
$$\frac{\pi^{1/2}y^{1-s}\Gamma(s-1/2)}{\Gamma(s)}D_{\mathfrak{b}}^{m,n}(f, s-N/2).$$
Similarly to the case $\mathfrak{b}=\infty$ we can write
$$ j(\gamma, \mathfrak{b}) = j(\gamma,z) \frac{\mathfrak{b}-\overline{z}}{2iy}-\overline{j(\gamma,z)} \frac{\mathfrak{b}-z}{2iy} $$
where we consider $\mathfrak{b}$ as a real number. Combining this trick with (\ref{speclemma2}), we see that we can express $E_\mathfrak{b}^{m,n}(z,s;N)$ in terms of the Poincaré series $G_{\n,\m,l}(z,s)$ and we conclude by an argument as in the case $\mathfrak{b}=\infty$ the following.
\begin{thm} \label{generalcusp}
Let $\Gamma$ be a discrete and co-finite subgroup of $\PSL_2(\R)$ with a cusp at $\infty$ of width 1, $\mathfrak{b}$ a cusp of $\Gamma$ and $f\in \mathcal{S}_k(\Gamma)$ a cusp form of even weight $k$. Then we have
$$ \sum_{r\in T_{\infty\mathfrak{b}}(X)}L(f,r,k/2)^m\overline{L(f,r,k/2)}^n \ll X^2 (\log X)^{\min(m,n)}  $$
and 
\begin{align}\label{generalcusp1} \sum_{r\in T_{\infty\mathfrak{b}}(X)}|L(f,r,k/2)|^{2n} =P_n(\log X)X^2+O_\eps(X^{\max( 4/3, 2 s_1)+\eps}), \end{align}
with $P_n$ as in Theorem \ref{normal2} and $s_1$ as in (\ref{P}).\end{thm}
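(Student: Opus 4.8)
The plan is to run, with $\infty$ replaced by the cusp $\mathfrak{b}$, the same chain of arguments that led to Theorem~\ref{normal2}. (The a priori polynomial bound on $L(f,r,k/2)$ needed for $D^{m,n}_{\mathfrak{b}}(f,s)$ to converge absolutely in a half-plane is obtained as in the remark following Theorem~\ref{maingeneral}, from the Birch--Stevens formula and the convexity bound for $L(f,\chi,1/2)$.) Fix an even integer $N\geq(m+n)(k-2)/2$. Since $\mathfrak{b}$ is real, $j(\gamma,\mathfrak{b})$ is real and occurs to a power $\geq-(m+n)(k-2)/2$ in $L(f,\gamma\mathfrak{b},k/2)^{m}\overline{L(f,\gamma\mathfrak{b},k/2)}^{n}$, so multiplying (\ref{speclemma2}) by $j(\gamma,\mathfrak{b})^{N}$ clears all negative powers of $j(\gamma,\mathfrak{b})$. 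Expanding by the binomial theorem both the bracket $j(\gamma,z)(w-\overline{z})-\overline{j(\gamma,z)}(w-z)$ inside the integral term of (\ref{speclemma2}) and the factor $j(\gamma,\mathfrak{b})=j(\gamma,z)\frac{\mathfrak{b}-\overline{z}}{2iy}-\overline{j(\gamma,z)}\frac{\mathfrak{b}-z}{2iy}$, one rewrites
$$ j(\gamma,\mathfrak{b})^{N}L(f,\gamma\mathfrak{b},k/2)^{m}\overline{L(f,\gamma\mathfrak{b},k/2)}^{n} $$
as a finite linear combination of terms
$$ h(z)\,j(\gamma,z)^{t}\overline{j(\gamma,z)}^{t'}\prod_{\nn\in\n}I_{\nn}(\gamma z)\prod_{\mm\in\m}\overline{I_{\mm}(\gamma z)}, $$
where $h:\H\to\C$ is smooth --- a product of powers of $y$, of powers of $(\mathfrak{b}-z)$ and $(\mathfrak{b}-\overline{z})$, and of integrals $\int_{\mathfrak{b}}^{z}f(w)(w-z)^{u}(w-\overline{z})^{v}(w-\mathfrak{b})^{k/2-1}\,dw$, all of which stay bounded on compacta and near every cusp after conjugating by a scaling matrix --- the multi-sets satisfy $|\n|\leq m$, $|\m|\leq n$ with elements in $\{0,\dots,k/2\}$, and, crucially, the exponent identity $t+t'=N-2\alpha(\n,\m)$ holds term by term.

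Summing over $\gamma\in\Gamma_\infty\backslash\Gamma$, multiplying by $\Im(\gamma z)^{s}$ and using
$$ j(\gamma,z)^{t}\overline{j(\gamma,z)}^{t'}\Im(\gamma z)^{s}=y^{(t+t')/2}j_{\gamma}(z)^{t-t'}\Im(\gamma z)^{s-(t+t')/2}, $$
the identity $t+t'=N-2\alpha(\n,\m)$ turns the exponent of $\Im(\gamma z)$ into $s-N/2+\alpha(\n,\m)$, so each term becomes $h(z)\,G_{\n,\m,l}(z,s-N/2)$ with $l=t-t'$ even. This is the exact analogue of the expansion in the proof of Proposition~\ref{propeis}, and for $m=n$ the only summand that can produce a pole of order $n+1$ at $s=N/2+1$ is the one with $\n=\m=\{k/2,\dots,k/2\}$ and $l=0$, whose coefficient is $\binom{N}{N/2}2^{-N}$ up to a bounded factor absorbed into $h$. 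From here Propositions~\ref{analyticont} and~\ref{polebound}, Corollary~\ref{weightnot0} and Theorem~\ref{main} give, word for word as for Proposition~\ref{propeis}, the meromorphic continuation of $E^{m,n,\mathfrak{b}}(z,s;N)$ to $\Re s>N/2+1/2$, its holomorphy for $\Re s>N/2+1$, the location of its poles in the strip at $\{p+N/2\mid p\in\mathcal{P}\}$, the pole-order bound $\min(m,n)+1$ at $s=N/2+1$, and --- for $m=n$ --- a pole there of order exactly $n+1$ with constant leading Laurent coefficient given by Theorem~\ref{main}.

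Next I would compute the constant term of the Fourier expansion of $E^{m,n,\mathfrak{b}}(\sigma_{\mathfrak{b}}z,s;N)$ at $\mathfrak{b}$ by Poisson summation --- the analogue of Lemma~\ref{fourier} --- which transfers all of this to $D^{m,n}_{\mathfrak{b}}(f,s)$ exactly as in Theorem~\ref{final}: meromorphic continuation to $\Re s>1/2$, holomorphy for $\Re s>1$, poles in $1/2<\Re s\leq1$ contained in $\mathcal{P}$, pole-order bound $\min(m,n)+1$ at $s=1$, a pole of order $n+1$ at $s=1$ for $D^{n,n}_{\mathfrak{b}}(f,s)$ with the same leading Laurent coefficient $\frac{(n!)^{2}(4\pi)^{nk}||f||^{2n}}{\pi((k-1)!)^{n}\vol(\Gamma)^{n+1}}$ as $D^{n,n}(f,s)$ (so that the resulting main-term polynomial is precisely the $P_{n}$ of Theorem~\ref{normal2}), and the strip bound $D^{m,n}_{\mathfrak{b}}(f,s)\ll_{\eps}(1+|t|)^{1/2}$ coming from Proposition~\ref{Growth}.

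Finally, the bound $\sum_{r\in T_{\infty\mathfrak{b}}(X)}L(f,r,k/2)^{m}\overline{L(f,r,k/2)}^{n}\ll X^{2}(\log X)^{\min(m,n)}$ follows from a routine contour shift, and (\ref{generalcusp1}) follows by applying Theorem~\ref{approxthm} to $s\mapsto D^{n,n}_{\mathfrak{b}}(f,s/2)$ with $S_{\mathrm{poles}}=2\mathcal{P}$, $a=1$, $A=1/2$ --- the identical computation to the proof of Theorem~\ref{normal2}, producing the same $P_{n}$ and the error term $O_{\eps}(X^{\max(4/3,\,2s_{1})+\eps})$. The one step I expect to need genuine care, rather than mere transcription of the $\mathfrak{b}=\infty$ case, is the bookkeeping in the first paragraph: checking that the more complicated integral term of (\ref{speclemma2}) really does collapse onto the series $G_{\n,\m,l}(z,s-N/2)$, i.e. that the shift $\alpha(\n,\m)$ built into the definition of $G_{\n,\m,l}$ absorbs exactly the excess powers of $y$ ($t+t'=N-2\alpha(\n,\m)$ term by term). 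Once that is verified, every subsequent step is literally the corresponding step for the cusp $\infty$.
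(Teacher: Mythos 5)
Your proposal is correct and follows exactly the route the paper takes: multiply (\ref{speclemma2}) by $j(\gamma,\mathfrak{b})^N$, use $j(\gamma,\mathfrak{b})=j(\gamma,z)\frac{\mathfrak{b}-\overline{z}}{2iy}-\overline{j(\gamma,z)}\frac{\mathfrak{b}-z}{2iy}$ to expand $E^{m,n,\mathfrak{b}}(z,s;N)$ into the Poincar\'e series $G_{\n,\m,l}(z,s-N/2)$, and then repeat the $\mathfrak{b}=\infty$ argument verbatim. In fact the paper only sketches this step ("we conclude by an argument as in the case $\mathfrak{b}=\infty$"), so your write-up, including the check that $t+t'=N-2\alpha(\n,\m)$ term by term, supplies more of the bookkeeping than the paper itself does.
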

Using the above we deduce easily Theorem \ref{maingeneral} in the case of (marginalizing to) a single cusp form $f\in\mathcal{S}_k(\Gamma)$ with twists at a general cusp $\mathfrak{b}$. The argument being exactly as in the case $\mathfrak{b}=\infty$.

Furthermore, Theorem \ref{generalcusp} allows us to prove Theorem \ref{normal}, which we will need in order to obtain an asymptotic formula for the averages of certain families consisting of automorphic $L$-functions of the form $L(f\otimes \chi, 1/2)$. The proof of this result (Corollary \ref{multiplicativetwistsgeneral}) will be given in Section \ref{centralvalues} below.
\begin{proof}[Proof of Theorem \ref{normal}]In the case where $\Gamma=\Gamma_0(q)$ and $\mathfrak{b}$ corresponds to the real number $0$, we have coming from \cite[p. 47]{Iw} the following scaling matrix;
$$\sigma_0=\sigma_{\mathfrak{b}}  =\begin{pmatrix} 0 & -1/\sqrt{q}\\ \sqrt{q} & 0 \end{pmatrix}. $$ 
This implies that
$$ T_{\infty 0 }=\{r=\frac{a}{c} \modulo 1\mid (a,c)=1, (c,q)=1\}$$ 
and $c(r)=c\sqrt{q}$. 

Thus we conclude from Theorem \ref{generalcusp} with $\Gamma=\Gamma_0(q)$, $f\in\mathcal{S}_k(\Gamma_0(q))$ and $\mathfrak{b}=0$;
\begin{align*}
&\sum_{\substack{0<c\leq X,\\ (c,q)=1}}\sum_{a\in (\Z/c\Z)^\times} |L(f,a/c,k/2)|^{2n}\\
&= \sum_{r\in T_{\infty 0}(\sqrt{q}X)} |L(f,r,k/2)|^{2n}\\
&= \, \frac{q (2C_f)^n \, n!}{\pi\, \vol(\Gamma_0(q)) } (\log X)^n X^2+\sum_{i=0}^{n-1}\beta^d_{f,i} (\log X)^iX^2+O_\eps(X^{\max(4/3, 2s_1)+\eps}).
\end{align*}  
By the approximation towards Selberg's conjecture by Kim and Sarnak \cite[p. 167]{Iw}, we know that $\Re s_1 \leq 39/64<2/3$, which yields exactly Theorem \ref{normal}. \end{proof}

\subsection{The joint distribution of additive twists of a basis of cusp forms}\label{jointdist}
Instead of considering a single cusp form $f$, we can consider an orthogonal basis $f_1,\ldots, f_d$ for $\mathcal{S}_k(\Gamma)$ with respect to the Petersson inner product. We will restrict to the case $\mathfrak{b}=\infty$ and then the discussion in the previous section carries directly over to this setting as well.

In this case for any two sequences $\underline{g}=(g_1,\ldots, g_m), \underline{h}=( h_1,\ldots, h_n)$ with $g_j,h_j\in \{f_1,\ldots, f_d\} $, we define the corresponding Goldfeld Eisenstein series
$$E^{\underline{g},\underline{h}}(z,s):=\sum_{\gamma\in \Gamma_\infty\backslash \Gamma}\left(\prod_{j=1}^m L(g_j,\gamma \infty,k/2)\right)\left(\prod_{j=1}^{n}\overline{L(h_j,\gamma \infty,k/2)}\right)(\Im \gamma z)^s.   $$
The constant term in the Fourier expansion of $E^{\underline{g},\underline{h}}(z,s)$ (at $\infty$) is given by
$$  \frac{\pi^{1/2}y^{1-s}\Gamma(s-1/2)}{\Gamma(s)} D^{\underline{g},\underline{h}}(s), $$
where
$$D^{\underline{g},\underline{h}}(s):=\sum_{r\in T_{\infty\infty}}\frac{\left(\prod_{j=1}^m L(g_j,r,k/2)\right)\left(\prod_{j=1}^{n}\overline{L(h_j,r,k/2)}\right)}{c(r)^{2s}}.$$
We can express the Goldfeld Eisenstein series as a linear combinations of certain Poincaré series, which generalizes $G_{\n,\m,l}(z,s)$ above. Consider tuples of integers $\underline{u}=(u_1,\ldots, u_{m'})$, $\underline{v}=(v_1,\ldots, v_{n'})$ with $m'\leq m, n'\leq n$ and $0\leq u_i,v_j\leq k/2$ and define
 \begin{align} G_{\underline{u}, \underline{v}, l}(z,s):= \sum_{\gamma \in \Gamma_\infty \backslash \Gamma}& j_\gamma (z)^{-l}\left( \prod_{j=1}^{m'} \frac{I_{u_{j}}(\gamma z; g_j)}{(-2i)^{u_{j}}}\right) \left( \prod_{j=1}^{n'} \frac{\overline{I_{v_{j}}(\gamma z; h_j)}}{(2i)^{v_{j}}}\right) \Im (\gamma z)^{s+\alpha(\underline{u},\underline{v})}, \end{align}
where
$$ \alpha(\underline{u},\underline{v})=\left(\sum_{j} k/2-u_{j}\right)+\left(\sum_{j} k/2-v_{j}\right). $$
Then the analogue of Proposition \ref{polebound} holds for the above Poincaré series as well, with essentially the same proof. Using this, it can be shown by the methods from the preceding sections that $D^{\underline{g},\underline{h}}(s)$ has a pole of order at most $\min (m,n)+1$ at $s=1$. Furthermore when $m=n$, we have 
\begin{align*} D^{\underline{g},\underline{h}}(s)&=  \left(\sum_{\sigma,\sigma'\in S_n}\prod_{j=1}^n \langle g_{\sigma(j)},h_{\sigma'(j)} \rangle \right)\frac{(4\pi)^{nk}}{(s-1)^{n+1}\pi((k-1)!)^n \vol(\Gamma)^{n+1}}\\
&+(\text{\it pole order at most $n$ at $s=1$}),\end{align*}  
where $S_n$ denotes the group of permutation on $n$ letters. Observe that this generalizes our previous results since $|S_n|=n!$. In particular $ D^{\underline{g},\underline{h}}(s)$ has a pole of order $n+1$ exactly if $\underline{g}$ and $\underline{h}$ are permutations of each other.

Now consider the $2d$ dimensional real random variable 
\begin{equation}\label{multidimran} (Y_{1,X},Z_{1,X}, \ldots, Y_{d,X}, Z_{d,X})^T\end{equation} 
on the outcome space $T(X)$ endowed with uniform probability measure, defined by 
$$Y_{j,X}(r)=\Re L(f_j,r,k/2)/\sqrt{C_{f_j}\log c(r)},$$
$$Z_{j,X}(r)=\Im L(f_j,r,k/2)/\sqrt{C_{f_j}\log c(r)}$$ 
for $r\in T(X)$ and $j=1,\ldots, d$. Then by the above we can evaluate all asymptotic moments and show using a combination of the results of Fréchet--Shohat and Cramér--Wold that as $X\rightarrow \infty$, this random variables (\ref{multidimran}) converge in distribution $d$ independent standard complex normal distributions.

By combining the methods described in this and the preceding section, one concludes the proof of Theorem \ref{maingeneral}.

\section{Applications to $L(f\otimes \chi,1/2) $}\label{centralvalues} 
We now apply our results to the averages of certain families constructed from the multiplicative twists $L(f\otimes \chi, 1/2)$ and thus giving a proof of Corollary \ref{multiplicativetwistsgeneral}. The connection between multiplicative and additive twists is for primitive characters given by the Birch--Stevens formula \cite[Theorem 2.3]{MaRu19}, but some cleverness has to be applied in order to deal with non-primitive characters.

Our results apply to a newform $f\in \mathcal{S}_k(\Gamma_0(q))$ of even weight $k$ and level $q$ with Fourier expansion (at $\infty$) given by
$$ f(z)=\sum_{n\geq 1} \lambda_f(n)n^{(k-1)/2} q^n, $$
where $ \lambda_f(n)$ denotes the $n$th Hecke eigenvalue of $f$. In what follows it is essential that $f$ is an eigenform for all Hecke operators.

Associated to such a newform $f \in \mathcal{S}_k(\Gamma_0(q))$ and a Dirichlet character $\chi\modulo c$, we define the (naively) twisted $L$-function;
$$L(f,\chi, s):= \sum_{n\geq 1} \frac{\lambda_f(n)\chi(n)}{n^{\frac{k-1}{2}+s}}, $$
which admits analytic continuation and a functional equation. Note that this is {\it not} necessarily equal to (the finite part) of the $L$-function of the automorphic representation $\pi_f\otimes \chi$ (where $\pi_f$ is the automorphic representation corresponding to $f$). However in the special case when $(q,c)=1$, then this is actually true and we will write $L(f\otimes \chi, s)=L(f,\chi, s)$.  
\subsection{Averages of multiplicative twists}
The first step is to establish a connection between additive twists and multiplicative ones. The formula below is a generalization of the Birch--Stevens formula \cite[Theorem 2.3]{MaRu19} to non-primitive characters.  
\begin{prop}[Birch--Stevens formula for non-primitive characters]\label{BirchSteven} \text{ }\\
Let $f\in \mathcal{S}_k(\Gamma_0(q))$ be a newform of weight $k$ and level $q$ and $\chi$ a Dirichlet character $\modulo c$. Then we have
\begin{align}
\nu(f, \chi^*,c/c(\chi)) L(f, \chi^*,1/2)=\sum_{a\in (\Z/c\Z)^\times} \overline{\chi}(a )L(f,a/c,k/2),
\end{align}
and 
\begin{align}
\label{birchstevens2}L(f,a/c,k/2)=\frac{1}{\varphi(c)} \sum_{\chi \modulo c}\chi(a) \nu(f, \chi^*,c/c(\chi)) L(f, \chi^*,1/2),
\end{align}
where $\chi^*\modulo c(\chi)$ denotes the unique primitive character that induces $\chi$ and
$$ \nu(f, \chi, n):= \tau(\overline{\chi})\sum_{\substack{n_1n_2n_3=n,\\ (n_1,q)=1}}\chi(n_1) \mu(n_1)\overline{\chi}(n_2) \mu(n_2) \lambda_f(n_3)n_3^{1/2}.  $$
\end{prop}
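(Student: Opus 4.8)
The plan is to derive the Birch–Stevens formula for non-primitive characters from the classical (primitive) case by carefully untangling the relationship between the Dirichlet series $L(f,\chi,s)$ attached to a possibly imprimitive character $\chi \pmod c$ and the $L$-function $L(f,\chi^*,s)$ of the primitive character $\chi^* \pmod{c(\chi)}$ inducing it. The starting point is the elementary identity obtained by sieving out the prime divisors of $c/c(\chi)$: writing $\chi(n) = \chi^*(n)\,\mathbf{1}_{(n,c)=1}$ and using Möbius inversion on the coprimality condition, one expresses $L(f,\chi,s)$ as a finite sum of Dirichlet-series shifts of $L(f,\chi^*,s)$. Because $f$ is a \emph{newform} and hence a Hecke eigenform, the Euler product for $L(f,\chi^*,s)$ is available, and the arithmetic factor that appears is precisely the triple convolution $(\delta_{(\cdot,q)=1}\times\chi\times\mu)\ast(\overline\chi\times\mu)\ast(\lambda_f\times(\cdot)^{1/2})$ noted in the remark after Corollary~\ref{multiplicativetwistsgeneral}; this is exactly what is packaged into $\nu(f,\chi,n)$. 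The care needed here is the interplay of three different sets of primes: those dividing $q$ (where the local factor of $f$ is degenerate, forcing the $(n_1,q)=1$ condition), and those dividing $c/c(\chi)$.

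The second ingredient is the classical Birch–Stevens relation for the \emph{primitive} character $\chi^*$, namely
$$\tau(\overline{\chi^*})\,L(f,\chi^*,k/2) = \sum_{a \in (\Z/c(\chi)\Z)^\times} \overline{\chi^*}(a)\, L(f,a/c(\chi),k/2),$$
which follows from the period-integral representation \eqref{periodint} together with the Gauss-sum expansion $\chi^*(n) = \tau(\overline{\chi^*})^{-1}\sum_{a} \overline{\chi^*}(a) e(na/c(\chi))$ and term-by-term summation (legitimate once one has, e.g., the absolute convergence afforded by Corollary~\ref{lindelof}, or equivalently via the contour-shift / Hecke argument used in Proposition~\ref{AddFE}). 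The task is then to promote this from modulus $c(\chi)$ to modulus $c$: one replaces the sum over $a \in (\Z/c(\chi)\Z)^\times$ by a sum over $a \in (\Z/c\Z)^\times$, which introduces a factor counting the fibers of the reduction map, and one rewrites $L(f,a/c(\chi),k/2)$ in terms of $L(f,a'/c,k/2)$ for the various lifts $a'$. Matching the two sides produces exactly the weight $\nu(f,\chi^*,c/c(\chi))$; here it is worth recording the compatibility $\nu(f,\chi^*,c/c(\chi)) = \nu(f,\chi^*,1) = \tau(\overline{\chi^*}) = \tau(\overline\chi)$ when $\chi$ is already primitive, which is the consistency check that the normalization is right.

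For the inversion formula \eqref{birchstevens2}, I would simply apply orthogonality of Dirichlet characters modulo $c$: multiply the first identity by $\chi(b)$, sum over all $\chi \pmod c$, and use $\sum_{\chi \bmod c} \chi(b)\overline\chi(a) = \varphi(c)\,\mathbf{1}_{a \equiv b}$ for $a,b \in (\Z/c\Z)^\times$. This is purely formal once the first formula is in hand, so no new difficulty arises; one only needs that every $a$ with $(a,c)=1$ appears, which it does.

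The main obstacle, I expect, is the bookkeeping in the first step — correctly identifying the arithmetic weight $\nu$ as the outcome of the Möbius sieve on the coprimality conditions combined with the Hecke-multiplicativity of $\lambda_f$, and in particular getting the $q$-dependence right (the condition $(n_1,q)=1$ rather than $(n_1 n_2 n_3, q)=1$, which reflects that only the "$\mu$-twisted" part of the local factor at primes dividing $q$ misbehaves, since $f$ is a newform of exact level $q$). A secondary subtlety is justifying the interchange of summation in the Gauss-sum step for a possibly non-primitive character and at the (analytically continued) central point rather than in the region of absolute convergence; this is handled either by invoking the convexity/Lindelöf bound for additive twists already established in \eqref{prebound} and Corollary~\ref{lindelof}, or by running the argument at $\Re s$ large and continuing, exactly as in the proof of Proposition~\ref{AddFE}.
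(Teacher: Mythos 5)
Your final inversion step (orthogonality of characters modulo $c$) is exactly the paper's, and your remark that one should work at large $\Re s$ and specialize to the central point by analytic continuation is also how the paper handles convergence. But the heart of the proposition --- producing the weight $\nu(f,\chi^*,c/c(\chi))$ --- is not actually proved in your proposal, and the mechanism you sketch for it does not work. You propose to start from the primitive Birch--Stevens identity at modulus $c(\chi)$ and ``promote'' it to modulus $c$ by counting fibers of the reduction map $(\Z/c\Z)^\times\to(\Z/c(\chi)\Z)^\times$ and by rewriting $L(f,a/c(\chi),k/2)$ in terms of $L(f,a'/c,k/2)$ for the lifts $a'$. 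Neither move is available: the additive twists at the different lifts $a'$ of a fixed residue $b$ modulo $c(\chi)$ are genuinely different numbers, so no fiber-counting factor appears; and $b/c(\chi)=b(c/c(\chi))/c$ has numerator \emph{not} coprime to $c$, so it cannot be expressed through the twists $a'/c$ with $(a',c)=1$ that occur in the statement. Your step (i) also misidentifies $\nu$: the factor relating $L(f,\chi,s)$ to $L(f,\chi^*,s)$ is a product over the primes $p\mid c$, $p\nmid c(\chi)$ of Euler polynomials of degree at most two, hence involves only $\lambda_f(p)$ at those primes, whereas $\nu(f,\chi,n)$ contains $\lambda_f(n_3)n_3^{1/2}$ for \emph{all} divisors $n_3$ of $c/c(\chi)$ (for instance $\lambda_f(p^e)p^{e/2}$ for any $e$), which cannot arise from removing finitely many degree-two local factors; moreover the imprimitive multiplicative twist $L(f,\chi,s)$ never enters the identity at all.

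What is really needed, and what the paper does, is to expand the right-hand side for $\Re s>1$, giving $\sum_{a\in(\Z/c\Z)^\times}\overline\chi(a)L(f,a/c,s+\tfrac{k-1}{2})=\sum_{n\geq 1}\lambda_f(n)n^{-s}\sum_{a\in(\Z/c\Z)^\times}\overline\chi(a)e(na/c)$, and then to evaluate the inner Gauss sum of the \emph{non-primitive} character by the classical formula \cite[Lemma 3]{Sh75}: it equals $\tau(\overline{\chi^*})\sum_{d\mid(n,c/c(\chi))}d\,\overline{\chi^*}\bigl(\tfrac{c}{c(\chi)d}\bigr)\mu\bigl(\tfrac{c}{c(\chi)d}\bigr)\chi^*\bigl(\tfrac{n}{d}\bigr)$. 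Writing $n=dl$ and applying the newform relation $\lambda_f(ld)=\sum_{h\mid(l,d),\,(h,q)=1}\mu(h)\lambda_f(l/h)\lambda_f(d/h)$ --- this is precisely where the condition $(n_1,q)=1$ in $\nu$ comes from --- one factors out $L(f\otimes\chi^*,s)$, and the leftover finite divisor sum becomes $\nu(f,\chi^*,c/c(\chi))$ upon setting $s=1/2$; the specialization is legitimate because the identity involves finitely many functions holomorphic there. Your proposal gestures at the right ingredients (M\"obius sieving, Hecke multiplicativity, the $(n_1,q)=1$ subtlety), but routes them through a ``modulus promotion'' that cannot be carried out; the missing idea is the explicit evaluation of the imprimitive Gauss sum inside the $n$-sum.
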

\begin{proof}
For $\Re s>1$, we have because of absolute convergence;
\begin{align}\label{cross} \sum_{a\in (\Z/c\Z)^\times} \overline{\chi}(a) L(f,a/c,s+(k-1)/2) = \sum_{n\geq 1} \frac{\lambda_f(n)}{n^s}\left(\sum_{a\in (\Z/c\Z)^\times}\overline{\chi}(a)e(na/c)\right).   \end{align}
The inner sum is a Gauss sum and by \cite[Lemma 3]{Sh75}, we get
$$\sum_{a\in (\Z/c\Z)^\times}\overline{\chi}(a)e(na/c)= \tau(\overline{\chi^*}) \sum_{d\mid (n,c/c(\chi))}d\, \overline{\chi^*}\left(\frac{c}{c(\chi)d}\right) \mu\left(\frac{c}{c(\chi)d}\right) \chi^*\left(\frac{n}{d}\right), $$
where $\chi^*\modulo c(\chi)$ denotes the unique primitive character that induces $\chi$. Plugging this into (\ref{cross}), interchanging the sums and putting $n=dl$, we arrive at
\begin{align*} &\sum_{a\in (\Z/c\Z)^\times} \overline{\chi}(a) L(f,a/c,s+(k-1)/2)\\
&=\tau(\overline{\chi}^*)\sum_{d\mid c/c(\chi)} \overline{\chi^*}\left(\frac{c}{c(\chi)d}\right) \mu\left(\frac{c}{c(\chi)d}\right)d \sum_{l>0} \frac{\lambda_f(dl)}{(dl)^s} \chi^*(l) .  \end{align*}
Now we use that $f$ is a newform, which implies that
$$   \lambda_f(ld)= \sum_{\substack{h\mid (l,d),\\ (h,q)=1}}\mu(h) \lambda_f\left(\frac{l}{h}\right)\lambda_f\left(\frac{d}{h}\right).  $$
With $m=l/h$ and $\delta=d/h$, we get
\begin{align*} &\tau(\overline{\chi^*}) \sum_{\substack{\delta h\mid c/c(\chi)\\ (h,q)=1}}\overline{\chi^*}\left(\frac{c}{c(\chi)\delta h}\right) \mu\left(\frac{c}{c(\chi)\delta h}\right) \frac{\lambda_f(\delta)}{\delta^{s-1}}h^{1-2s}\chi^*(h)\mu(h) \sum_{m>0}\frac{\chi^*(m)\lambda_f(m)}{m^s}\\
  =&\tau(\overline{\chi^*})  L(f\otimes \chi^*,s)\sum_{\substack{\delta h\mid c/c(\chi),\\ (h,q)=1}}\overline{\chi^*}\left(\frac{c}{c(\chi)\delta h}\right) \mu\left(\frac{c}{c(\chi)\delta h}\right) \frac{\lambda_f(\delta)}{\delta^{s-1}}h^{1-2s}\chi^*(h)\mu(h).\end{align*}
Since the sum above is finite, we can extend the equality to $s=1/2$ by analytic continuation. The second equality of this lemma follows from the first by orthogonality of characters.
\end{proof}
\begin{remark} A similar formula has been considered previously by Merel in \cite[Théorème 1]{Merel09}, where the formula is applied in a more algebraic context. \end{remark}



Using this formula, Corollary \ref{multiplicativetwistsgeneral} is an immediate consequence of Theorem \ref{normal}. 

\begin{proof}[Proof of Corollary \ref{multiplicativetwistsgeneral}]
Since $(q,c)=1$, it follows that $(q,c(\chi))=1$ for all Dirichlet characters $\chi$ appearing on the left-hand side of (\ref{2ndmoment}), where $c(\chi)$ denotes the conductor of $\chi$. This implies that we have $L(f,\chi^*,1/2)=L(f\otimes \chi^*,1/2)$.
 
The corollary now follows from Theorem \ref{normal} by expressing the additive twists in terms of $L(f\otimes \chi^*, 1/2)$ using Lemma \ref{BirchSteven}, interchanging the sums, using orthogonality of Dirichlet characters and the fact that
$$ \overline{\nu(f, \chi^*,c/c(\chi)) L(f\otimes \chi^*,1/2)}=\chi(-1) \nu(f, \overline{\chi}^*,c/c(\overline{\chi})) L(f\otimes \overline{\chi}^*,1/2).   $$
\end{proof}
In the special case $n=1$, we derive the following result, which is an average version of the second moment calculation in \cite[Theorem 1.17]{BlFoKoMiMiSa18} with improved error-term. 

\begin{cor}\label{average2ndmoment}
Let $f\in \mathcal{S}_k(\Gamma_0(q))$ be a newform of even weight $k$ and level $q$. Then we have
\begin{align}
\nonumber&\sum_{c\leq X, (q,c)=1 } \frac{1}{\varphi(c)} \sum_{\chi \modulo c} |\nu(f, \chi^*,c/c(\chi))|^2 |L(f\otimes \chi^*,1/2)|^2\\
=  &\frac{q(4\pi)^k|\!|f|\!|^2}{\pi (k-1)!\, \vol(\Gamma_0(q))^2} (\log X) X^2+\beta_{f,1} X^2+O_\eps(X^{4/3+\eps})
\end{align}
with $\chi^*\modulo c(\chi)$ and $ \nu$ as above and $\beta_{f,1}$ a constant.
\end{cor}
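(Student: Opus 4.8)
The plan is to obtain Corollary \ref{average2ndmoment} directly from the $n=1$ case of Theorem \ref{normal} (equivalently, of Corollary \ref{multiplicativetwistsgeneral}); no new analytic input is needed beyond Theorem \ref{normal}, and the only extra ingredient is the generalized Birch--Stevens formula of Proposition \ref{BirchSteven}, which converts the second moment of additive twists into the second moment of multiplicative twists while keeping track of the non-primitive characters.

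Concretely, I would proceed as follows. First, specialize Theorem \ref{normal} to $n=1$ to get
\[ \sum_{\substack{0<a<c\le X\\ (qa,c)=1}} |L(f,a/c,k/2)|^{2} = P_1(\log X)\,X^2+O_\eps(X^{4/3+\eps}), \]
and note that $(qa,c)=1$ is equivalent to $(c,q)=1$ together with $a\in(\Z/c\Z)^\times$, so the left-hand side equals $\sum_{c\le X,\,(c,q)=1}\sum_{a\in(\Z/c\Z)^\times}|L(f,a/c,k/2)|^2$. Next, for each fixed $c$ with $(c,q)=1$, substitute the expansion (\ref{birchstevens2}) into $L(f,a/c,k/2)$ and into its complex conjugate, producing a double sum over characters $\chi_1,\chi_2\modulo c$; summing over $a\in(\Z/c\Z)^\times$ and invoking orthogonality of Dirichlet characters $\modulo c$ kills the off-diagonal terms and contributes a factor $\varphi(c)$, reducing the normalization $\varphi(c)^{-2}$ to $\varphi(c)^{-1}$ and giving
\[ \sum_{a\in(\Z/c\Z)^\times}|L(f,a/c,k/2)|^2=\frac{1}{\varphi(c)}\sum_{\chi\modulo c}|\nu(f,\chi^*,c/c(\chi))|^2\,|L(f,\chi^*,1/2)|^2 \]
for every such $c$. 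Since $(c,q)=1$ forces $(c(\chi),q)=1$, we may replace $L(f,\chi^*,1/2)$ by $L(f\otimes\chi^*,1/2)$; summing over $c\le X$ with $(c,q)=1$ then reproduces precisely the left-hand side of Corollary \ref{average2ndmoment}, which is therefore equal to $P_1(\log X)X^2+O_\eps(X^{4/3+\eps})$. Reading off the degree-$1$ leading coefficient of $P_1$ from Theorem \ref{normal} and unwinding it through the definition $C_f=(4\pi)^k\|f\|^2/((k-1)!\,\vol(\Gamma_0(q)))$ yields the claimed main term, while the degree-$0$ coefficient of $P_1$ is the constant $\beta_{f,1}$.

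I do not expect a genuine obstacle: all of the real work --- the automorphic completion, the spectral analysis of the Poincar\'e series $G_{\n,\m,l}(z,s)$, the determination of the pole at $s=1$, and the contour-integration argument with explicit error term --- is already carried out in the proof of Theorem \ref{normal}. The only points deserving attention are formal: that the inner sum over characters is finite for each $c$, so the rearrangement is unconditionally valid and does not affect the $O_\eps(X^{4/3+\eps})$ error term, and that one must use the \emph{non-primitive} Birch--Stevens formula (Proposition \ref{BirchSteven}), since the characters $\chi\modulo c$ occurring need not be primitive, which is exactly why the arithmetic weight $\nu(f,\chi^*,c/c(\chi))$ appears. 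As noted in the text, the resulting statement is an averaged version of the second moment asymptotic in \cite[Theorem 1.17]{BlFoKoMiMiSa18}, now with a power-saving error term and obtained without any approximate functional equation.
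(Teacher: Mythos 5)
Your proposal is correct and follows the same route as the paper: Corollary \ref{average2ndmoment} is derived from Theorem \ref{normal} via the Birch--Stevens expansion (\ref{birchstevens2}), summation over $a\in(\Z/c\Z)^\times$, and orthogonality of Dirichlet characters, which is exactly the $n=1$ case of the paper's proof of Corollary \ref{multiplicativetwistsgeneral}. One arithmetical caveat: unwinding the leading coefficient $q(2C_f)\cdot 1!/(\pi\vol(\Gamma_0(q)))$ of $P_1$ from Theorem \ref{normal} gives $2q(4\pi)^k\|f\|^2/(\pi(k-1)!\,\vol(\Gamma_0(q))^2)$, which differs from the constant displayed in Corollary \ref{average2ndmoment} by a factor of $2$ --- this appears to be a typo in the paper's statement rather than a gap in your argument, but you should not assert the constants match without checking.
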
 
\bibliography{/Users/Soemanden/Documents/MatematikSamlet/Bib-tex/mybib}
\bibliographystyle{amsplain}

\end{document}